\let\oldtocsection=\tocsection
\let\oldtocsubsection=\tocsubsection
\let\oldtocsubsubsection=\tocsubsubsection
\renewcommand{\tocsection}[2]{\bfseries\hspace{0em}\oldtocsection{#1}{#2}}
\renewcommand{\tocsubsection}[2]{\hspace{1em}\oldtocsubsection{#1}{#2}}
\renewcommand{\tocsubsubsection}[2]{\itshape\hspace{2em}\oldtocsubsubsection{#1}{#2}}
\tikzset{
    labl/.style={anchor=south, rotate=90, inner sep=.5mm}
}
\newtheorem{thm}{Theorem}[section]
\newtheorem{prop}[thm]{Proposition}
\newtheorem{lemma}[thm]{Lemma}
\newtheorem{lem}[thm]{Lemma}
\theoremstyle{definition}
\newtheorem*{definition*}         {Definition}
\theoremstyle{remark}
\newtheorem{remark}[thm]{Remark}
\newcommand{\Hom}{\mathrm{Hom}}
\newcommand{\Z}{\mathbb{Z}}
\newcommand{\C}{\mathbb{C}}
\newcommand{\arrow}{\rightarrow}
\def\cC{\mathcal{C}}
\def\id{\operatorname{id}}
\def\Hom{\operatorname{Hom}}
\newcommand{\Dmod}{\mathscr{D}}
\newcommand{\Mmod}{\mathcal{M}}
\newcommand{\shO}{\mathscr{O}}
\newcommand{\shV}{\mathcal{V}}
\newcommand{\shT}{\mathscr{T}}
\newcommand{\Bt}{\tilde{B}}
\newcommand{\gt}{\tilde{g}}
\newcommand{\shVt}{\tilde{\shV}}
\newcommand{\derR}{\mathbf{R}}
\newcommand{\derL}{\mathbf{L}}
\newcommand{\shQ}{\mathscr{Q}}
\newcommand{\Ytl}{\tilde{Y}}
\newcommand{\htl}{\tilde{h}}
\newcommand{\xit}{\tilde{\xi}}
\newcommand{\etat}{\tilde{\eta}}
\DeclareMathOperator{\cont}{int}
\newcommand{\Vfix}{V^{\mathit{fix}}}
\newcommand{\shVfix}{\shV^{\mathit{fix}}}
\newcommand{\Pt}{P}
\DeclareMathOperator{\RatCurves}{RatCurves}
\newcommand{\shTfix}{\shT_{s(\cC)}^{\mathit{fix}}}
\newcommand{\shTpos}{\shT_{s(\cC)}^{\mathit{pos}}}
\newcommand{\shCfix}{\shT_{\cC}^{\mathit{fix}}}
\newcommand{\shCpos}{\shT_{\cC}^{\mathit{pos}}}
\newcommand{\shUPfix}{\shT_{M_0 \times \P^1}^{\mathit{fix}}}
\newcommand{\shUPpos}{\shT_{M_0 \times \P^1}^{\mathit{pos}}}
\def\cR{\mathcal{R}}
\newcommand{\Gt}{\tilde{G}}
\title[A Hodge-theoretic proof of Hwang's theorem]%
{A Hodge-theoretic proof of Hwang's theorem on base manifolds of Lagrangian fibrations}
 \author[B. Bakker]{Benjamin Bakker}
 \address{\noindent B. Bakker:  Dept. of Mathematics, Statistics, and Computer Science, University of Illinois at Chicago, Chicago, USA.}
\email{bakker.uic@gmail.com}
  \author[Ch. Schnell]{Christian Schnell}
\address{\noindent Ch. Schnell:  Department of Mathematics, Stony Brook University, Stony Brook, NY, USA.}
\email{christian.schnell@stonybrook.edu}
\def\P{\mathbb{P}}
\def\R{\mathbb{R}}
\def\Q{\mathbb{Q}}
\def\shW{\mathcal{W}}
\begin{document}
\maketitle
\begin{abstract}
	 We give a Hodge-theoretic proof of Hwang's theorem, which says that if
	 the base of a Lagrangian fibration of an irreducible holomorphic symplectic
	 manifold is smooth, it must be projective space.
\end{abstract}

\section{Introduction}
Let $X$ be an irreducible holomorphic symplectic manifold of dimension $2n$, and let
$f:X\to B$ be a Lagrangian fibration over a base of dimension $n$. In this
note, we reprove the following well-known result of Hwang \cite{Hwang:base}.

\begin{thm}[Hwang] \label{main}
	Let $f:X\to B$ be a Lagrangian fibration of an irreducible holomorphic symplectic
	manifold of dimension $2n$.  If $B$ is smooth, then $B\cong \P^n$.
\end{thm}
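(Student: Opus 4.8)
The plan is to build from $f$ a polarized variation of Hodge structure on the smooth locus of $B$, observe that its Hodge bundle \emph{is} the tangent bundle of $B$, and then feed this into the theory of minimal rational curves, reducing to the Cho--Miyaoka--Shepherd-Barron characterization of projective space.

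\emph{Setup and the Hodge bundle.} Let $\Delta\subset B$ be the discriminant of $f$, let $B^\circ=B\setminus\Delta$ and $X^\circ=f^{-1}(B^\circ)$. Over $B^\circ$ the morphism $f$ is a smooth family of abelian varieties, giving the polarized weight-one $\Z$-VHS $\V=R^1f_*\Z|_{B^\circ}$ with Hodge filtration $F^1\V=f_*\Omega^1_{X^\circ/B^\circ}$. The symplectic form induces $T_X\cong\Omega^1_X$, and since the fibers of $f$ are Lagrangian this isomorphism carries $T_{X^\circ/B^\circ}$ onto $f^*\Omega^1_B$; dualizing and applying $f_*$ yields canonical identifications $F^1\V\cong T_{B^\circ}$ and $\V/F^1\V\cong\Omega^1_{B^\circ}$, with the polarization pairing the two. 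Thus the Hodge bundle of $\V$ is $T_{B^\circ}$ --- the most positive possibility, which is morally the reason $B$ must be $\P^n$. Using Matsushita's theorem that $B$ is a Fano manifold with $b_2(B)=1$ and writing $-K_B=rH$ for the ample generator $H$ of $\Pic(B)$, it suffices to show that a general minimal rational curve $\ell\subset B$ satisfies $-K_B\cdot\ell=n+1$ (equivalently $r=n+1$, equivalently the variety of minimal rational tangents at a general point is all of $\P(T_xB)$), since then $B\cong\P^n$ by Cho--Miyaoka--Shepherd-Barron.

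\emph{The Donagi--Markman cubic.} Griffiths transversality gives the Higgs field $\theta\colon F^1\V\to(\V/F^1\V)\otimes\Omega^1_{B^\circ}$, i.e.\ a map $T_{B^\circ}\to\Omega^1_{B^\circ}\otimes\Omega^1_{B^\circ}$; its integrability $\theta\wedge\theta=0$ together with its compatibility with the polarization make it symmetric, and by the Donagi--Markman symmetry special to Lagrangian (algebraically integrable) systems it is in fact a cubic differential $c\in H^0(B^\circ,\Sym^3\Omega^1_{B^\circ})$. It is nonzero because $f$ is not isotrivial, but it cannot extend over $B$ because a Fano manifold (being rationally connected) has no nonzero global symmetric differentials; hence $c$ has poles along $\Delta$. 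Quasi-unipotence of local monodromies and passage to Deligne's canonical extension bound the pole order, so $c$ extends to a nonzero section of $\Sym^3\Omega^1_B(\log\Delta)$, up to a bounded twist by $\Delta$.

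\emph{Restriction to minimal rational curves, and the main obstacle.} For a general minimal rational curve $\ell\cong\P^1$ through a general point, $T_B|_\ell\cong\O(2)\oplus\O(1)^{\oplus p}\oplus\O^{\oplus q}$ with $p+q=n-1$ and $-K_B\cdot\ell=p+2$, so the target is $q=0$. One first notes $\ell\not\subset B^\circ$: otherwise $\V|_\ell$ would be a polarized VHS on a compact simply connected curve, hence constant, forcing $T_B|_\ell\cong F^1\V|_\ell$ to be a trivial subbundle --- impossible since $T_\ell=\O(2)\subset T_B|_\ell$; the same argument applied to finite covers shows the monodromy of $\V$ around $S:=\ell\cap\Delta$ is infinite. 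Restricting $\V$ to $\ell$, one then combines the Arakelov inequality for a weight-one VHS on a curve (the degree of the canonical extension of the Hodge bundle is $\le\tfrac n2(2g(\ell)-2+\#S)=\tfrac n2(\#S-2)$), the semipositivity of Hodge bundles, a careful comparison of that canonical extension with $T_B|_\ell$ (the discrepancy at the points of $S$ being governed by the residues of the Gauss--Manin connection and the shape of the generic singular fiber), and the restriction $c|_\ell$ of the cubic, whose symmetry and logarithmic pole bound constrain how $\theta$ must act on each summand --- the expected conclusion being that the summand $\O^{\oplus q}$ cannot occur (heuristically it would be a flat direction of $\V$ along $\ell$ that the polarization and Griffiths transversality cannot accommodate, and would split off a rank-one weight-one sub-VHS, which cannot be polarized). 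This last step is the crux: extracting the \emph{sharp} equality $-K_B\cdot\ell=n+1$ from soft Hodge-theoretic positivity on rational curves requires precise control of $\V$ near the discriminant, and that is exactly where Hodge theory is meant to replace Hwang's projective-geometric analysis of the variety of minimal rational tangents.
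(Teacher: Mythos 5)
There is a genuine gap, and you acknowledge it yourself: the entire content of the theorem is the sharp equality $-\deg g^*\omega_B=n+1$ for minimal rational curves meeting $B^\circ$, and your final paragraph only lists ingredients (Arakelov inequality, semipositivity, the cubic) together with an ``expected conclusion'' that the trivial summand $\shO^{\oplus q}$ cannot occur. No argument is given for why a trivial summand is incompatible with the polarization or with Griffiths transversality on a single curve --- and in fact it is not: the paper's \Cref{example} (the Hilbert scheme of an elliptic K3) exhibits a covering family of rational curves along which the pullback of $V$ has a fixed part of rank $2n-2$, so the restriction of $V$ to one minimal curve can perfectly well split off a constant summand whose $(1,0)$-part is the trivial piece of $g^*\shT_B$. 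This is why the actual proof cannot be local to a single curve. A second concrete problem: your cubic $c$ is identically zero when $f$ is isotrivial (isotrivial Lagrangian fibrations exist and are not excluded by any hypothesis), so the step ``It is nonzero because $f$ is not isotrivial'' is circular and the whole cubic-differential mechanism disappears exactly in a case the theorem must cover.

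The paper's route through the crux is quite different and is worth comparing. After establishing semipositivity of $g^*\shT_B$ (\Cref{prop:semipositive}) and identifying the maximal trivial quotient with the fixed part of the VHS on the curve (\Cref{prop:fixed-part}) --- two steps your setup is consistent with --- it does \emph{not} try to kill the trivial summand curve by curve. Instead it assumes $d<n$, globalizes the fixed part over the universal family of minimal curves to obtain a nontrivial splitting $V_Z=\Vfix_Z\oplus W_Z$ on a finite cover $p\colon Z\to B$ built from the Stein factorization of the evaluation map (\Cref{lem:splitting}), and then spends the bulk of the work (\Cref{sect cover}) showing $p$ is unramified in codimension one, hence an isomorphism since $B$ is simply connected. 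The contradiction is then with Voisin's theorem (\Cref{thm:voisin}) that $V\otimes\R$ is irreducible --- an input your proposal never invokes but which is indispensable: without it, nothing prevents $V$ from genuinely splitting and $B$ from failing to be $\P^n$. The descent step uses the foliations induced by the splitting, the identification of the loci $P_z$ swept out by components of curves through a point with $\P^d$, and the finite-monodromy analysis of \Cref{lem:finite-monodromy}; none of this is replaceable by the Arakelov-type estimates you propose.
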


The proof that we present below is inspired by Hwang's original argument, but perhaps
clarified by the consistent use of Hodge theory, especially the theory of
variations of Hodge structure. There are two other important inputs. The first is the
following enhancement of Mori's characterization of projective space, due to Cho,
Miyaoka, and Shepherd-Barron \cite{cmsb} and Kebekus \cite{kebekus}.

\begin{thm}[Cho--Miyaoka--Shepherd-Barron, Kebekus] \label{thm:Mori} 
	Let $Y$ be a uniruled smooth projective variety and suppose there is a dense
	Zariski open subset $Y^{\circ} \subseteq Y$ such that every nonconstant rational
	curve $g \colon C\to Y$ meeting $Y^{\circ}$ satisfies $-\deg g^*\omega_Y\geq \dim
	Y+1$.  Then $Y$ is isomorphic to projective space.
\end{thm}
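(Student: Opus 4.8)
The plan is to first recover from the hypothesis the numerical invariants characteristic of $\P^n$, and then run (a mild variant of) the recognition argument of Cho--Miyaoka--Shepherd-Barron and Kebekus. Since $Y$ is uniruled, fix a \emph{minimal dominating family} $\cR$ of rational curves on $Y$: an irreducible component of the space of rational curves whose general member $g\colon\P^1\to Y$ dominates $Y$ and for which $d:=-\deg g^*\omega_Y$ is minimal among all dominating families. Then a general $[g]\in\cR$ is free, and by minimality one may take $\cR$ to be unsplit, i.e.\ no member through a general point breaks into lower-degree rational curves. As $\cR$ is dominating, its general member meets the dense open set $Y^{\circ}$, so the hypothesis forces $d\ge n+1$; on the other hand Mori's bend-and-break bounds a minimal dominating family by $d\le n+1$. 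Hence $d=n+1$. Consequently, for a general point $x\in Y$ the subfamily $\cR_x\subseteq\cR$ of members through $x$ has $\dim\cR_x=d-2=n-1$, and the standard estimate $\dim\operatorname{Locus}(\cR_x)\ge d-1=n$ shows that the members of $\cR$ through a general point already cover $Y$; in particular $Y$ is rationally connected.

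The crux is the local structure of $\cR$ at a general point $x$. Let $\tau_x\colon\cR_x\dashrightarrow\P(T_xY)\cong\P^{n-1}$ be the tangent map, sending a curve smooth at $x$ to its tangent direction there. The essential input --- this is exactly where Kebekus improves on the original Cho--Miyaoka--Shepherd-Barron argument --- is that for general $x$ the general member $g$ of $\cR_x$ is immersed, $g^*T_Y$ is \emph{standard} (that is, $g^*T_Y\cong\O(2)\oplus\bigoplus_{i=1}^{n-1}\O(a_i)$ with each $a_i\le 1$), and $\tau_x$ extends to a finite morphism that is birational onto its image. Since $2+\sum_i a_i=-\deg g^*\omega_Y=n+1$ and there are $n-1$ summands each at most $1$, all $a_i=1$: thus $g^*T_Y\cong\O(2)\oplus\O(1)^{n-1}$, the splitting type of $T_{\P^n}$ along a line. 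As $\tau_x$ is finite, its image is a closed subvariety of $\P^{n-1}$ of dimension $\dim\cR_x=n-1$, hence equals $\P^{n-1}$ --- so the variety of minimal rational tangents at $x$ is all of $\P(T_xY)$ --- and being in addition birational onto the normal variety $\P^{n-1}$, $\tau_x\colon\cR_x\to\P^{n-1}$ is an isomorphism; in particular $\deg\tau_x=1$.

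It remains to deduce $Y\cong\P^n$. Blow up a general point: $\sigma\colon\widehat Y\to Y$ with exceptional divisor $E\cong\P(T_xY)\cong\P^{n-1}$, so $N_{E/\widehat Y}\cong\O_E(-1)$. For general $s\in\cR_x$ the strict transform $\widehat C_s$ meets $E$ transversally at the single point $\tau_x(s)$, with $E\cdot\widehat C_s=1$ and $-K_{\widehat Y}\cdot\widehat C_s=(n+1)-(n-1)=2$. Because $\deg\tau_x=1$, a general point of $\widehat Y$ lies on a unique $\widehat C_s$, distinct such curves meet only along $E$ (two meeting elsewhere would give two members of $\cR$ through two general points of $Y$), and they cover $\widehat Y$; so --- by the argument of Cho--Miyaoka--Shepherd-Barron --- the map $\widehat Y\to\cR_x\cong\P^{n-1}$ sending a point to the curve through it is a $\P^1$-bundle, with $E$ a section. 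A $\P^1$-bundle $\P(\cE)$ over $\P^{n-1}$ carrying a section of normal bundle $\O(-1)$ forces, after twisting $\cE$ and using $H^1(\P^{n-1},\O(1))=0$, that $\cE\cong\O\oplus\O(-1)$, so $\widehat Y\cong\P_{\P^{n-1}}(\O\oplus\O(-1))\cong\operatorname{Bl}_{\mathrm{pt}}\P^n$ with $E$ the exceptional divisor; contracting $E$ on both sides gives $Y\cong\P^n$.

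The main obstacle is the middle step: the finiteness (and generic injectivity) of the tangent map at a general point, equivalently that a general minimal rational curve through a general point is standard and that distinct such curves have distinct tangent directions there. This rests on a delicate analysis of how rational curves through a general point can acquire singularities or split off lower-degree components, and is precisely Kebekus's refinement of the original Cho--Miyaoka--Shepherd-Barron strategy. The remaining ingredients --- Mori's bend-and-break, the dimension estimate for $\operatorname{Locus}(\cR_x)$, the fact that the family of minimal curves defines a $\P^1$-bundle on the blow-up, and the classification of $\P^1$-bundles over projective space --- are standard.
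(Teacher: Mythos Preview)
The paper does not give its own proof of this statement: \Cref{thm:Mori} is quoted as a known result, with references to \cite{cmsb} and \cite{kebekus}, and is used as a black box in the proof of \Cref{main}. So there is nothing in the paper to compare your argument against.

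That said, your sketch is a faithful outline of the actual Cho--Miyaoka--Shepherd-Barron/Kebekus argument. The logical skeleton---pin down $d=n+1$ by sandwiching the hypothesis against Mori's bend-and-break bound, use unsplitness to get $\dim\operatorname{Locus}(\cR_x)=n$, invoke Kebekus's finiteness and generic injectivity of the tangent map to conclude that the VMRT at a general point is all of $\P(T_xY)$ and that the general member is standard with splitting type $\shO(2)\oplus\shO(1)^{n-1}$, then blow up a general point and recognize the resulting $\P^1$-bundle---is correct, and you are right to flag that the hard part is Kebekus's analysis of singular members of $\cR_x$. One small point worth tightening: you assert ``by minimality one may take $\cR$ to be unsplit''; in the present setting the cleaner justification is that any component of a reducible degeneration through a general point of $Y^\circ$ would itself be a rational curve through $Y^\circ$ of strictly smaller anticanonical degree, directly contradicting the hypothesis (not just minimality of $d$ among dominating families).
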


The second input is a result of Voisin \cite{voisintorsion} about the
variation of Hodge structure on the first cohomology of the fibers of a Lagrangian fibration.
Let $B^\circ\subseteq B$ be the open set over which the Lagrangian fibration
$f$ is smooth, and $f^\circ:X^\circ\to B^\circ$ the base change of $f$ to $B^\circ$.
It is known that the discriminant locus $D = B \setminus B^{\circ}$ is a nonempty
divisor. We denote by $V=R^1 f^\circ_*\Q_{X^\circ}$ the polarized variation of Hodge
structure of weight $1$ on $B^{\circ}$; by $\shV$ the underlying flat vector
bundle; by $\nabla$ the Gauss-Manin connection on $\shV$; and by $F^1 \shV =
f^{\circ}_* \Omega_{X^{\circ}/B^{\circ}}^1$ the Hodge bundle of type $(1,0)$.

\begin{thm}[Voisin] \label{thm:voisin} 
	$V \otimes \R$ is irreducible as a real variation of Hodge structure.
\end{thm}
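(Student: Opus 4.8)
The plan is to show directly that the monodromy representation of $V\otimes\R$ is irreducible, by a Lefschetz-type argument built around the vanishing cycles of $f$ over the discriminant $D$. Since a sub-local system of a polarizable variation of Hodge structure underlies a sub-variation, \cref{thm:voisin} is equivalent to the statement that the monodromy group $\Gamma\subseteq\Aut(V_b\otimes\R,Q)$ (here $Q$ is the polarization and $b\in B^\circ$) acts irreducibly on $V_b\otimes\R$. The first ingredient is that $V$ has no nonzero global flat section, i.e.\ $H^0(B^\circ,V)=0$, so that $\Gamma$ has no nonzero invariant vector: by the global invariant cycle theorem, $H^0(B^\circ,V)$ is the image of $H^1(X,\Q)$ under restriction to the cohomology of a smooth fibre, and $H^1(X,\Q)=0$ because $X$ is irreducible holomorphic symplectic.

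Next I would bring in the geometry of $f$ along $D$. Because $X$ is simply connected and $f$ has connected fibres, $B$ is simply connected, so $\pi_1(B^\circ)$ is normally generated by the meridians around the irreducible components of $D$, and $\Gamma$ is generated by the corresponding local monodromies. By the description of the general fibre of a Lagrangian fibration over a point of the discriminant due to Hwang and Oguiso, at a general point of each component of $D$ this local monodromy is a symplectic transvection $T_\delta\colon x\mapsto x\pm Q(x,\delta)\,\delta$ attached to a single vanishing cycle $\delta$; moreover the vanishing cycles of all components form a single $\Gamma$-orbit up to sign (this is immediate when $D$ is irreducible, since then the smooth locus of $D$ is connected, and in general it should follow from the connectedness of $D$ and the mixing of vanishing cycles along intersections of its components). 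Granting this, let $E\subseteq V_b\otimes\R$ be the span of all vanishing cycles, a $\Gamma$-submodule; if $E\ne V_b\otimes\R$, choose a $\Gamma$-stable complement $E'$, and note that for $x\in E'$ and any vanishing cycle $\delta$ we have $Q(x,\delta)\,\delta=T_\delta(x)-x\in E\cap E'=0$, so $E'$ is orthogonal to every $\delta$, hence fixed by every $T_\delta$, hence fixed by all of $\Gamma$; thus $E'\subseteq(V_b\otimes\R)^\Gamma=0$, a contradiction. Hence the vanishing cycles span $V_b\otimes\R$.

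The proof is then finished by the classical argument of SGA~7. Suppose $F\subseteq V_b\otimes\R$ is a nonzero $\Gamma$-invariant subspace. Since the vanishing cycles span and $Q$ is nondegenerate, some vanishing cycle $\delta$ is not $Q$-orthogonal to $F$; applying $T_\delta$, which preserves $F$, to a vector $x\in F$ with $Q(x,\delta)\ne0$ shows $\delta\in F$, and hence the whole $\Gamma$-orbit of $\delta$ lies in $F$; but that orbit is the set of all vanishing cycles, which spans, so $F=V_b\otimes\R$. Therefore $\Gamma$ acts irreducibly, proving \cref{thm:voisin}.

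I expect the main obstacle to be the two inputs about the discriminant used above: that the general degeneration of $f$ along each component of $D$ is mild enough to contribute only a single transvection, and that the resulting vanishing cycles form one monodromy orbit. The first is the substance of Hwang and Oguiso's analysis of the characteristic foliation on $D$; the second requires, when $D$ is reducible, showing that the monodromy genuinely mixes the vanishing cycles of different components --- for instance by examining the local monodromy near a general point of an intersection $D_i\cap D_j$, or by a global connectedness argument. A different route, closer to Hwang's original proof, would sidestep the vanishing-cycle bookkeeping by using the symplectic form directly: the Lagrangian condition gives a canonical isomorphism $F^1\shV\cong T_{B^\circ}$ (the holomorphic symplectic form vanishes on $T_{X^\circ/B^\circ}$, hence pairs it perfectly with $f^{\circ*}T_{B^\circ}$), under which, together with the dual identification $\shV/F^1\shV\cong\Omega^1_{B^\circ}$ coming from the polarization, the Kodaira--Spencer map of $V$ becomes a totally symmetric cubic $\theta\in H^0(B^\circ,\Sym^3\Omega^1_{B^\circ})$; a decomposition $V\otimes\R=W\oplus W'$ would force $\theta$ to be block-diagonal for the induced holomorphic splitting $T_{B^\circ}=F^1W\oplus F^1W'$, and one would then seek a contradiction with the fact (Matsushita) that $B$ is a Fano manifold of Picard number one. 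The obstacle in that approach is to control this structure across the divisor $D$ and to convert the splitting of the tangent bundle into a genuine contradiction.
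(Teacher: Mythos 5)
This statement is not proved in the paper at all: it is quoted as a theorem of Voisin \cite{voisintorsion}, and Voisin's argument is of a quite different nature from what you propose (it does not go through vanishing cycles). So your proposal has to stand on its own, and it has a genuine gap at its central geometric input. You assert that at a general point of each component of the discriminant $D$ the local monodromy of $V$ is a symplectic transvection $x\mapsto x\pm Q(x,\delta)\delta$ attached to a single vanishing cycle. This is false in general. The Hwang--Oguiso analysis of the general singular fibre allows local monodromies of \emph{finite order} (the higher-dimensional analogues of Kodaira fibres of type $II$, $III$, $IV$, $I_0^*$, \dots); for instance, for $X=S^{[n]}\to\P^n$ with $S\to\P^1$ an elliptic K3 having a cuspidal fibre, the corresponding component of $D$ carries a local monodromy of order $6$. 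Such a transformation is semisimple with two nontrivial eigenvalues that are roots of unity --- this is exactly the situation treated in \Cref{lem:finite-monodromy} of the paper --- and it is not a transvection, so there is no vanishing cycle $\delta$ and the Picard--Lefschetz bookkeeping that drives both of your irreducibility steps (the span of the $\delta$'s is everything; any invariant subspace meeting some $\delta$ non-orthogonally contains it) simply does not get started. Any proof along these lines must handle the finite-order components by a different mechanism, and that is not a routine fix.

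Even on components where the local monodromy is unipotent, the claim that the vanishing cycles of all components of $D$ form a single $\Gamma$-orbit is asserted rather than proved, and it is precisely the kind of statement that requires a real argument: in classical Lefschetz theory the conjugacy of vanishing cycles is deduced from the irreducibility of the discriminant in the universal family of hyperplane sections, and no analogue of that input is available here (the ampleness of each component of $D$, coming from $b_2(B)=1$, guarantees that components meet, but not that monodromy ``mixes'' their vanishing cycles). The parts of your argument that do work --- the reduction to irreducibility of the monodromy representation via Deligne semisimplicity, the vanishing $H^0(B^\circ,V)=0$ from $b_1(X)=0$ and the global invariant cycle theorem, and the normal generation of $\pi_1(B^\circ)$ by meridians since $B$ is Fano and hence simply connected --- are correct but are the easy half. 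Your alternative sketch via the cubic condition and $\rho(B)=1$ is closer in spirit to arguments that actually succeed, but as you say yourself it is not carried out.
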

The anticanonical degree of a curve (as well as its deformation theory) is linked to
the variation $V$ since contraction with the symplectic form yields a natural
isomorphism $\shT_{B^\circ}\cong F^1\shV$.  Work of Matsushita
\cite{Matsushita:higher} extends this to an isomorphism between $\shT_B$ and the
``canonical extension'' of $F^1\shV$ in codimension
one, and this will be an important ingredient in our approach.  

It is known by \cite{matsushitafano} that $B$ is a Fano manifold (and therefore
uniruled).  The strategy is to
assume the condition in \Cref{thm:Mori} is not satisfied, so that there is a low
degree rational curve meeting $B^\circ$, and conclude from this that:  (1) the curve
deforms to cover
$B$; and (2) the variation $V$ must have a nontrivial fixed part when restricted to
$C$.  We then obtain a nontrivial splitting of $V$ on the universal family of curves
and thus on a finite cover of $B$. To finish, we show that this splitting descends to $B^\circ$,
where it of course contradicts \Cref{thm:voisin}.  In our approach, the first step is proved using a Hodge module version of
Matsushita's result combined with a straightforward functoriality property of Hodge
modules to show that $g^*\shT_B$ is semi-positive (see \Cref{prop:semipositive}) and
that its maximal trivial quotient comes from the fixed part of the variation (see
\Cref{prop:fixed-part}).  As the low degree curve which is assumed to exist may 
a priori not deform, it can intersect the discriminant divisor in its singular
locus, and the Hodge module perspective provides a version of Matsushita's theorem
that works in higher codimension.  The descent step is straightforward in the case
that $f$ is non-isotrivial (using the result of \cite{Bakker:Matsushita}, see \Cref{rmk:nonisotrivial is done}), but we do not actually use this.  Instead, we prove the descent in general using a rather tricky argument which is a reinterpretation of an argument from Hwang's paper using Hodge theory to simplify the steps.  Note that the descent step must in some sense be subtle---in fact, it is the most difficult part of our proof---as it is
easy to construct examples where $V$ splits off a factor on a finite cover that is
constant on a covering family of rational curves (see \Cref{example}).  

\begin{remark}
The recent preprint by Li and Tosatti \cite{Li-Tosatti} contains another proof of
Hwang's theorem that follows the same overall strategy, but has a more analytic
flavor. Instead of Hodge theory, Li and Tosatti rely on the theory of special K\"ahler
manifolds \cite{Freed}. They use the curvature of the special K\"ahler metric on
$B^{\circ}$ to show that the restriction of $\shT_B$ to a rational curve of minimal
degree is semipositive, and that any trivial quotient of $g^* \shT_B$ actually
comes from a subbundle that is flat with respect to the Chern connection of the
metric. For the descent step at the end of the proof, however, they depend on one of
the results from our paper.
\end{remark}

\subsection*{Outline}In \Cref{sect tangent bundle} we prove the results on the semipostivity of $g^*\shT_B$ and the relation between the maximal trivial quotient and the fixed part of the variation.  We give two proofs:  the first in the projective case is more geometric, using results of Koll\'ar; the second as a consequence of the functoriality of Hodge module extensions.  In  \Cref{sect foliations} we make some general remarks relating subvariations to foliations on $B$.  In \Cref{sect deformation} we use the existence of a low degree curve to construct a nontrivial splitting of the variation on a finite cover.  In \Cref{sect connected comp}, we study the geometry of the subvarieties swept out by connected components of the family of rational curves through a given point. (For the sake of exposition, we reprove all the necessary results, which are due to Araujo \cite{Araujo} and Hwang \cite{Hwang:pre-base}, even though this makes the paper longer.)   In \Cref{sect cover}, we describe the main ingredient for descending the splitting to $B$, using ideas from \cite{Hwang:base}. Finally, we prove \Cref{main} in \Cref{sect proof}.

\subsection*{Notation and terminology}

By a \emph{rational curve} $g:C\to B$ we always mean a nonconstant morphism from $C\cong
\P^1$.  If the image meets the open subset $B^\circ$, we denote by
$g^\circ:C^\circ:=g^{-1}(B^\circ)\to B^\circ$ the restriction of $g$, and in general
the superscript ``$\circ$" always stands for objects that are related to $B^\circ$.  
We denote by $V_C:=(g^\circ)^*V$ the pullback of
the variation, and likewise for any pullback.  We typically use Roman letters $V,W$
to denote local systems and script letters $\shV,\shW$ to denote the
associated flat vector bundles.

\section{Restriction of the tangent bundle to rational curves}\label{sect tangent bundle}

In this section, we use results from Hodge theory to analyze the splitting behavior
of the tangent bundle $\shT_B$ on rational curves in $B$ that meet the smooth locus
$B^{\circ}$ of the Lagrangian fibration $f \colon X \to B$. Here it will be more
convenient to work with the variation of Hodge structure on $R^{2n-1} f_*^{\circ}
\Q_{X^{\circ}}$ instead of the one on $R^1 f_*^{\circ} \Q_{X^{\circ}}$. This
makes no essential difference, because the Hard Lefschetz theorem says that, after
tensoring by $\R$, the two variations are isomorphic up to a Tate twist by
$\R(-n+1)$; the isomorphism is induced by cup product with the $(n-1)$-th power of a
K\"ahler form on $X$. We will therefore change the notation slightly, and---in this
section only---use the symbol $V$ for the variation of Hodge structure on $R^{2n-1}
f_*^{\circ} \Q_{X^{\circ}}$. The relevant Hodge bundle is then $F^n \shV = R^{n-1}
f_*^{\circ} \omega_{X^{\circ}/B^{\circ}}$; it is of course isomorphic to $f_*^{\circ}
\Omega_{X^{\circ}/B^{\circ}}^1$.

Our starting point is the following result by Matsushita \cite{Matsushita:higher},
which describes the tangent bundle of $B$ in terms of Hodge theory.

\begin{thm} \label{thm:Matsushita}
	There is an isomorphism $R^{n-1} f_* \omega_{X/B} \cong \shT_B$, whose restriction
	to $B^{\circ}$ is the natural isomorphism induced by the holomorphic symplectic
	form.
\end{thm}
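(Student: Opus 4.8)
The plan is to produce the isomorphism over the smooth locus $B^\circ=B\setminus D$ directly, and then to extend it across the discriminant divisor $D$, which is the only serious step. For the extension I would use Hodge modules (the viewpoint the rest of the paper adopts), or, when $X$ is projective, Kollár's theorems on higher direct images of dualizing sheaves.

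Over $B^\circ$ the fibers of $f$ are abelian varieties of dimension $n$. Contraction with the holomorphic symplectic form gives an isomorphism $\shT_{X^\circ}\xrightarrow{\sim}\Omega^1_{X^\circ}$ which, because $f$ is Lagrangian, carries $\shT_{X^\circ/B^\circ}$ — isotropic of half dimension — onto $f^\circ{}^*\Omega^1_{B^\circ}=\ker(\Omega^1_{X^\circ}\to\Omega^1_{X^\circ/B^\circ})$; the induced map on the quotients is an isomorphism $f^\circ{}^*\shT_{B^\circ}\xrightarrow{\sim}\Omega^1_{X^\circ/B^\circ}$, and applying $f^\circ_*$ (the fibers are connected) gives $\shT_{B^\circ}\cong f^\circ_*\Omega^1_{X^\circ/B^\circ}$. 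Composing this with the isomorphism $f^\circ_*\Omega^1_{X^\circ/B^\circ}\xrightarrow{\sim}R^{n-1}f^\circ_*\omega_{X^\circ/B^\circ}$ obtained by restricting the Hard Lefschetz isomorphism recalled before the theorem — cup product with the $(n-1)$-st power of a K\"ahler class on $X$ — to the Hodge bundles of types $(1,0)$ and $(n,n-1)$ respectively yields the asserted isomorphism over $B^\circ$. (Since $X$ is holomorphic symplectic, $\omega_X\cong\mathcal O_X$, hence $\omega_{X/B}\cong f^*\omega_B^{-1}$ and $R^{n-1}f_*\omega_{X/B}\cong(R^{n-1}f_*\mathcal O_X)\otimes\omega_B^{-1}$; the theorem is therefore equivalent to Matsushita's isomorphism $R^{n-1}f_*\mathcal O_X\cong\Omega^{n-1}_B$ \cite{Matsushita:higher}, which over $B^\circ$ reads $R^{n-1}f^\circ_*\mathcal O_{X^\circ}\cong\bigwedge^{n-1}R^1f^\circ_*\mathcal O_{X^\circ}\cong\bigwedge^{n-1}\Omega^1_{B^\circ}$, using $R^1f^\circ_*\mathcal O_{X^\circ}\cong\Omega^1_{B^\circ}$.)

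To extend across $D$: by Saito's theory (in the analytic category, as $X$ need not be projective), $R^{n-1}f_*\omega_{X/B}$ is the lowest piece $F_{\min}$ of the Hodge filtration of the polarizable pure Hodge module $\mathcal H^{n-1}(f_*\Q^H_X[2n])$ on $B$. Saito's decomposition theorem writes this Hodge module as a direct sum of intermediate extensions; its full-support summand is $\mathrm{IC}_B(V)$ for $V$ a Tate twist of the weight-$(2n-1)$ variation $R^{2n-1}f^\circ_*\Q$, with top Hodge bundle $F^n\shV=R^{n-1}f^\circ_*\omega_{X^\circ/B^\circ}$, while every other summand is supported on a proper subvariety and so contributes a torsion sheaf to $F_{\min}$ — which must vanish, since $R^{n-1}f_*\omega_X$ is torsion-free by Kollár's theorem. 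Hence $R^{n-1}f_*\omega_{X/B}\cong F_{\min}(\mathrm{IC}_B(V))$, and by Saito's formula for the Hodge filtration of an intermediate extension, together with Schmid's nilpotent orbit theorem, this coincides along a general point of each component of $D$ — i.e.\ in codimension one — with Deligne's canonical extension of $F^n\shV$, and is in particular a vector bundle there. It remains to match this canonical extension with $\shT_B$: both are vector bundles restricting on $B^\circ$ to $F^n\shV\cong\shT_{B^\circ}$, so this is once more a codimension-one statement, which one settles by analyzing the transverse structure of $f$ and the limiting mixed Hodge structure of $V$ at a general point of $D$. When $X$ is projective the same skeleton runs with Kollár's theorems in place of Saito's — torsion-freeness of $R^\bullet f_*\mathcal O_X=R^\bullet f_*\omega_X$, the splitting $Rf_*\mathcal O_X\cong\bigoplus_iR^if_*\mathcal O_X[-i]$, and local freeness of the $R^if_*\mathcal O_X$ (which uses that Lagrangian fibrations are equidimensional, hence $f$ flat) — again reducing everything to the same local computation at $D$.

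The step I expect to be the main obstacle is exactly this codimension-one computation at the discriminant: identifying $R^{n-1}f_*\mathcal O_X$ near $D$ with $\Omega^{n-1}_B$, equivalently $\shT_B$ with the Deligne canonical extension of the Hodge bundle $F^n\shV$ — which comes down to understanding the transverse structure of $f$, and the limiting mixed Hodge structure, along a general point of each component of $D$, plus (on the Kollár route) the local freeness of $R^if_*\mathcal O_X$. Everything else — the isomorphism over $B^\circ$, the reduction to a statement about $\mathcal O_X$, the vanishing of the torsion summands, and the extension over the part of $D$ of codimension $\geq 2$ (where reflexivity, or local freeness, of the sheaves on both sides suffices) — is routine; the content of Matsushita's theorem is concentrated in the behavior of $f$ along the generic points of the components of $D$.
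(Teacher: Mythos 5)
Your proposal is correct and matches the paper's proof, which likewise reduces the statement to Matsushita's isomorphism $R^{n-1} f_* \omega_X \cong \Omega_B^{n-1} \cong \omega_B \otimes \shT_B$ (using $\omega_X \cong \shO_X$) and cites \cite{Matsushita:higher}, noting that the argument carries over to the K\"ahler case. Your additional Hodge-module/Koll\'ar sketch for extending across the discriminant is essentially the alternative proof the paper points to in \cite[\S4]{lagrangian}, with the key codimension-one computation at $D$ correctly identified as the real content.
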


\begin{proof}
	Matsushita proves that the natural isomorphism on $B^\circ$ extends to $R^{n-1}
	f_* \omega_X \cong \Omega_B^{n-1} \cong \omega_B \otimes \shT_B$. Tensoring both
	sides by $\omega_B^{-1}$ then gives the desired result. Matsushita only states the
	theorem for projective $X$, but his proof carries over to the case
	where $X$ is a compact K\"ahler manifold. For an alternative proof, see
	\cite[\S4]{lagrangian}.
\end{proof}

Let $g \colon C \to B$ be a nonconstant morphism from $C \cong \P^1$ whose image
intersects $B^{\circ}$. Like any vector
bundle on $\P^1$, the pullback $g^* \shT_B$ decomposes as
\begin{equation} \label{eq:ai}
	g^* \shT_B \cong \shO(a_1) \oplus \dotsb \oplus \shO(a_n)
\end{equation}
for certain integers $a_1, \dotsc, a_n$. Our first result is that $g^* \shT_B$ is
semi-positive. 

\begin{prop}\label{prop:semipositive}
The pullback $g^* \shT_B$ is semi-positive, in the sense that $a_1, \dotsc, a_n \geq
0$.
\end{prop}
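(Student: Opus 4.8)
The plan is to realize $g^* \shT_B$ as (a summand or quotient of) the pullback of a Hodge module that carries a natural semi-positivity property, and then invoke the fact that semi-positive vector bundles on $\P^1$ have only non-negative summands. By \Cref{thm:Matsushita} we have $\shT_B \cong R^{n-1} f_* \omega_{X/B}$, so it suffices to show that the pullback $g^*\bigl(R^{n-1} f_* \omega_{X/B}\bigr)$ is semi-positive. The key point is that $R^{n-1} f_* \omega_{X/B} = R^{n-1} f_* \omega_X \otimes \omega_B^{-1}$, and the sheaf $R^{n-1} f_* \omega_X$ is the lowest nonzero piece of the Hodge filtration on a pure Hodge module on $B$ — namely a direct summand of $H^{n-1} f_* (\omega_X^{\mathrm{H}})$ in the derived category of mixed Hodge modules, where $\omega_X^{\mathrm{H}}$ is the Hodge module corresponding to the (shifted) constant variation on $X$. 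Over $B^\circ$ this recovers the Hodge bundle $F^n \shV$ of the weight-$(2n-1)$ variation $V$ (in the notation of this section), and the extension across $D$ is the canonical/lower extension of the Deligne extension.

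The next step is to pull everything back along $g \colon C \to B$. Since $C \cong \P^1$ is a smooth curve, the image curve meets $B^\circ$, and the preimage $g^{-1}(D)$ is a finite set of points, the pullback $g^* V_C := (g^\circ)^* V$ is a polarized variation of Hodge structure on the quasi-projective curve $C^\circ$, with unipotent or quasi-unipotent local monodromy at the punctures. The relevant functoriality statement for Hodge modules — which is exactly the input advertised in the introduction — says that $g^* \bigl(R^{n-1} f_* \omega_{X/B} \otimes (\text{correction})\bigr)$ is computed by the corresponding lowest Hodge piece of the pulled-back Hodge module on $C$; concretely, I would show that $g^* \shT_B$ is isomorphic to $\bigl(\text{the lowest piece of the Hodge filtration of the lower Deligne extension of } \shV_C\bigr)$ up to a line-bundle twist coming from $g^* \omega_B^{-1}$ matched against the relative dualizing sheaf. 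Then one applies Fujita-type / Zucker-type semi-positivity: the lowest Hodge filtration piece $F^{\mathrm{top}}$ of the canonical extension of a polarized VHS on a compact curve is a semi-positive (nef) vector bundle. On $\P^1$, nef is equivalent to all $a_i \geq 0$, which is the claim.

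The main obstacle — and the reason the paper gives two proofs — is making the compatibility between Matsushita's isomorphism $\shT_B \cong R^{n-1} f_* \omega_{X/B}$ and the Hodge-module extension precise in codimension $\geq 1$: the curve $C$ may meet $D$ inside its singular locus $\Dsing$, so one cannot simply restrict to the smooth part of $D$ and argue by Fujita over a curve missing the bad locus. The Hodge-module formalism is what lets one bypass this: the functor $g^*$ (more precisely the appropriate shift of the pullback on the level of mixed Hodge modules) interacts cleanly with the Hodge filtration regardless of the codimension of the intersection with $D$, so the "lowest piece" computation and the resulting semi-positivity go through verbatim. I would therefore spend the bulk of the argument on (i) identifying $R^{n-1} f_* \omega_X$ as the lowest Hodge piece of $H^{n-1} f_* \omega_X^{\mathrm{H}}$ and checking this is a summand, (ii) citing the base-change/functoriality for the lowest Hodge piece under the pullback $g$, and (iii) quoting the semi-positivity of $F^{\mathrm{top}}$ of a canonical extension on $\P^1$; the alternative, more elementary route in the projective case (via Kollár's results on $R^{n-1} f_* \omega_X$ and its pullbacks to curves) I would mention as a parallel argument but not pursue in detail here.
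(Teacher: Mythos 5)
Your overall route is the paper's second, Hodge-theoretic proof: identify $\shT_B$ with the lowest Hodge piece $F_{-n}\Mmod$ of the pure Hodge module extending $V$, compare its pullback along $g$ with the corresponding bundle built from the canonical extension of $V_C$ on $C$, and quote semi-positivity of the latter (Fujita/Peters, or Koll\'ar vanishing). The one step that does not survive as you state it is (ii): the claim that $g^*\shT_B$ \emph{is}, up to a twist, the lowest Hodge piece of the Deligne extension of $\shV_C$, i.e.\ that this construction commutes with pullback ``regardless of the codimension of the intersection with $D$.'' That base-change statement is false in general: precisely when $C$ meets $D$ non-transversely or inside $\Dsing$ --- the case you correctly single out as the whole difficulty --- the canonical extension of the restricted variation differs from the restriction of the extension (residue eigenvalues get rescaled by intersection multiplicities, and the log resolution $\mu$ intervenes), and the two bundles need not have the same degree. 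What is true, and what must be proved, is only the existence of a morphism
\[
	F^n \shVt_C^{>-1} \longrightarrow g^* \shT_B
\]
that restricts to the natural isomorphism over $C^{\circ}$; the paper obtains it by applying the adjunction $(\derR\mu_*,\mu^!)$ to Saito's formula $\omega_B \otimes F_{-n}\Mmod \cong \derR\mu_*\bigl(\omega_{\Bt}\otimes F^n\shVt^{>-1}\bigr)$ on a log resolution and then pulling back along the lift $\gt$ of $g$. The direction of this morphism matters and cannot be reversed, so you should not phrase the step as a base-change isomorphism.

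The weaker statement suffices, and your argument should be rerouted through it. The morphism above is injective (being an isomorphism on a dense open subset, with locally free source), and an injection on $\P^1$ from a semi-positive bundle of full rank $n$ into $g^*\shT_B \cong \bigoplus_i \shO(a_i)$ forces all $a_i \geq 0$: a summand $\shO(a_i)$ with $a_i<0$ admits no nonzero map from a semi-positive bundle, so the source would have to inject into a subsheaf of rank $n-1$, a contradiction. With that substitution your proof coincides with the paper's Hodge-module proof; the paper's first, projective-case proof does the same thing more geometrically, replacing $F^n\shVt_C^{>-1}$ by $R^{n-1}\htl_*\omega_{\Ytl/C}$ for a resolution $\Ytl$ of the base-changed family and the comparison morphism by the trace map $r_*\omega_{\Ytl}\to\omega_Y$.
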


We will first present a geometric proof in the case when $X$ is projective;
afterwards, we will give a Hodge-theoretic proof that also works when $X$ is a
compact K\"ahler manifold.  Let us denote by $h \colon Y \to C$ the base change of
the Lagrangian fibration, and
let $r \colon \Ytl \to Y$ be a resolution of singularities of $Y$ that is an
isomorphism over the preimage of $C^{\circ} = g^{-1}(B^{\circ})$. For dimension
reasons, the composition $\htl = r \circ h \colon \Ytl \to C$ is still flat of
relative dimension $n$. We get a commutative diagram
\[
	\begin{tikzcd}
		\Ytl \rar{r} \drar[bend right=20]{\htl} & Y \dar{h} \rar & X \dar{f} \\
						& C \rar{g} & B
	\end{tikzcd}
\]
in which the square is Cartesian. The key step in the proof is the following lemma.

\begin{lem} \label{lem:morphism}
	There is a morphism of sheaves
	\[
		R^{n-1} \htl_* \omega_{\Ytl/C} \to g^* \shT_B
	\]
	that restricts to the natural isomorphism over the open subset $C^{\circ} =
	g^{-1}(B^{\circ})$.
\end{lem}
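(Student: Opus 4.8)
The plan is to present the morphism as a composition of a trace map coming from the resolution $r$ with a morphism $R^{n-1}h_*\omega_{Y/C}\to g^*\shT_B$, extracted from Matsushita's theorem (\Cref{thm:Matsushita}) by base change, that is an isomorphism over $C^{\circ}$. The essential structural point is that although $g$ need not be flat (it is not, once $n\geq 2$), it is the relative dualizing sheaf of $f$ that is flat over $B$: since $X$ and $B$ are smooth and $f$ is flat, $f$ is a Cohen--Macaulay morphism, so $\omega_{X/B}^{\bullet}=\omega_{X/B}[n]$ with $\omega_{X/B}$ flat over $B$ and its formation compatible with arbitrary base change. Applying this to $g\colon C\to B$, the relative dualizing complex of $h$ is $\omega_{Y/C}^{\bullet}=\omega_{Y/C}[n]$, where $\omega_{Y/C}$ is the pullback of $\omega_{X/B}$ along the projection $g'\colon Y\to X$; in particular $Y$ is Cohen--Macaulay of dimension $n+1$. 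I will also use Koll\'ar's splitting theorem $Rf_*\omega_X\cong\bigoplus_i R^if_*\omega_X[-i]$, which after the projection formula gives $Rf_*\omega_{X/B}\cong\bigoplus_i R^if_*\omega_{X/B}[-i]$, the degree $n-1$ summand being $\shT_B$ by \Cref{thm:Matsushita}.

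The construction then proceeds in three steps. First, by the projection formula and Grauert--Riemenschneider vanishing one has $R\htl_*\omega_{\Ytl/C}=Rh_*(r_*\omega_{\Ytl/C})$, and the trace morphism of Grothendieck duality for the proper map $r$ gives $r_*\omega_{\Ytl}\to\omega_Y$; twisting by $h^*\omega_C^{-1}$ and applying $Rh_*$ produces
\[
	R\htl_*\omega_{\Ytl/C}=Rh_*(r_*\omega_{\Ytl/C})\longrightarrow Rh_*\omega_{Y/C},
\]
which is an isomorphism over $C^{\circ}$ because $r$ is. Second, since $\omega_{X/B}$ is flat over $B$ and $f$ is proper, the derived base-change morphism $Lg^*Rf_*\omega_{X/B}\to Rh_*(L{g'}^{*}\omega_{X/B})=Rh_*\omega_{Y/C}$ is an isomorphism. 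Third, combining this with the Koll\'ar splitting, $Rh_*\omega_{Y/C}\cong\bigoplus_i Lg^*R^if_*\omega_{X/B}[-i]$; on taking $\mathcal{H}^{n-1}$ the summands with $i<n-1$ contribute nothing, since the derived pullback of a sheaf sits in non-positive degrees, while the summand $i=n-1$ contributes $g^*R^{n-1}f_*\omega_{X/B}=g^*\shT_B$, so there is a natural map $R^{n-1}h_*\omega_{Y/C}\to g^*\shT_B$. Composing the first step with this map gives the asserted morphism; it is moreover the unique morphism extending its restriction to $C^{\circ}$, since $R^{n-1}\htl_*\omega_{\Ytl/C}$ is torsion-free by Koll\'ar's theorem, so in particular the construction does not depend on the chosen splitting.

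Finally one checks the restriction to $C^{\circ}$: there $h^{\circ}\colon Y^{\circ}\to C^{\circ}$ is smooth and proper, so $R^{n-1}h^{\circ}_*\omega_{Y^{\circ}/C^{\circ}}$ is locally free and the higher derived-pullback terms vanish, each arrow above is the evident natural isomorphism, and chasing through identifies the composite with the isomorphism $(g^{\circ})^{*}F^n\shV\cong g^*\shT_B|_{C^{\circ}}$ induced by the symplectic form, i.e.\ with the pullback along $g^{\circ}$ of the restriction of Matsushita's isomorphism. The step that takes the most care is exactly this interaction with the non-flat morphism $g$: one cannot pull back Matsushita's isomorphism along $g$ directly, nor commute cohomology with $g^{*}$, and the argument works only because the flatness is carried by $\omega_{X/B}$ over $B$ (which is what makes the derived base change an isomorphism, and forces $Y$ to be Cohen--Macaulay) together with Koll\'ar's splitting (which isolates the degree $n-1$ cohomology sheaf). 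The same argument applies when $X$ is merely compact K\"ahler, using the K\"ahler versions of Grauert--Riemenschneider vanishing and of Koll\'ar's decomposition, or equivalently by running it inside Saito's category of Hodge modules, with the trace and base-change maps replaced by the corresponding functorial morphisms and the coherent sheaves by the appropriate lowest pieces of the Hodge filtration.
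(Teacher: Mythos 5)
Your construction is correct and follows essentially the same route as the paper: the trace map $r_*\omega_{\Ytl}\to\omega_Y$ for the resolution (with Grauert--Riemenschneider vanishing to identify $R^{n-1}\htl_*$ with $R^{n-1}h_*\circ r_*$), composed with the base-change isomorphism $g^*\derR f_*\omega_{X/B}\cong \derR h_*\omega_{Y/C}$ coming from flatness and base-change compatibility of the relative dualizing sheaf. The only cosmetic difference is that you isolate the degree-$(n-1)$ cohomology via Koll\'ar's splitting, whereas the paper uses the local freeness of all $R^if_*\omega_{X/B}$ (Matsushita) to degenerate the base-change spectral sequence; also note that the uniqueness of the extension follows from torsion-freeness of the target $g^*\shT_B$ rather than of the source, a harmless slip.
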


\begin{proof}
	Before giving the proof, we need to review a few facts about relative dualizing
	sheaves and base change. Here we probably have to assume that $X$ is projective
	(although it is known by \cite{Campana:LocalProjectivity} that Lagrangian
	fibrations are projective locally in the analytic topology on the base). In this algebraic setting, a good reference for duality theory is
	\cite[\href{https://stacks.math.columbia.edu/tag/0DWE}{Tag ODWE}]{stacks-project}. 
	To begin with, $B$ is smooth and $f$ is flat,
	and so all fibers of $f$ are local complete intersections in $X$ and therefore
	Gorenstein. Since $f$ is proper and flat, it follows that the relative dualizing
	sheaf $\omega_{X/B}$ is isomorphic to $\omega_X \otimes f^* \omega_B^{-1}$, and
	that the relative dualizing complex is $\omega_{X/B}[n]$. Moreover, in this
	situation, the relative dualizing sheaf commutes with arbitrary base change.

	Now we begin the proof. Flatness of $f$ and the base change property for the
	relative dualizing sheaf give us an isomorphism
	\[
		g^* \derR f_* \omega_{X/B} \cong \derR h_* \omega_{Y/C}.
	\]
	Since all the higher direct images $R^i f_* \omega_{X/B}$ are locally free (by
	Matsushita's theorem), we get in particular that
	\begin{equation} \label{eq:base-change-iso}
		g^* \shT_B \cong g^* R^{n-1} f_* \omega_{X/B} \cong R^{n-1} h_* \omega_{Y/C}.
	\end{equation}
	Consider now the resolution of singularities $r \colon \Ytl \to Y$. The trace map
	gives us a morphism $r_* \omega_{\Ytl} \to \omega_Y$ that is an isomorphism over
	the smooth locus of $Y$ and therefore injective. The induced morphism
	\begin{equation} \label{eq:trace}
		R^{n-1} \htl_* \omega_{\Ytl/C} \to R^{n-1} h_* \omega_{Y/C}
	\end{equation}
	is of course still an isomorphism over $C^{\circ}$. We then get the desired
	morphism by composing \eqref{eq:trace} and \eqref{eq:base-change-iso}; it is
	clearly the natural isomorphism over $C^{\circ}$.
\end{proof}

We can now prove that $g^* \shT_B$ is always semi-positive.

\begin{proof}[Proof of \Cref{prop:semipositive}]
	From \Cref{lem:morphism}, we get a short exact sequence
	\[
		0 \to R^{n-1} \htl_* \omega_{\Ytl/C} \to g^* \shT_B \to \shQ \to 0,
	\]
	in which $\shQ$ is supported on the finite set $C \setminus C^{\circ}$ and therefore
	a torsion sheaf. This reduces the problem to the semi-positivity of $R^{n-1} \htl_*
	\omega_{\Ytl/C}$. Since $\htl \colon \Ytl \to C$ is a K\"ahler fiber space over a
	curve, this is the content of a classical result by Fujita \cite{Fujita}. Alternatively, one can argue using Koll\'ar's results \cite{Kollar:DualizingI} about higher direct images of dualizing sheaves. Since $C \cong \P^1$, we have 
    \[
        R^i h_* \omega_{\Ytl/C} \otimes \shO(-1)
        \cong R^i h_* \omega_{\Ytl} \otimes \shO(1),
    \]
    and this bundle has no higher cohomology by Koll\'ar's vanishing theorem (which
	 applies to any morphism from a compact K\"ahler manifold to a projective
	 variety). But on $\P^1$, this is only possible if $R^i h_* \omega_{\Ytl/C}$ is
	 semi-positive.     
\end{proof}

We also need to understand the trivial part in the decomposition \eqref{eq:ai}, at
least in the case of generic rational curves. Let us therefore assume from now on
that $g \colon C \to B$ is an immersed rational curve that intersects the
discriminant divisor $D = B \setminus B^{\circ}$ transversely. The 
preimage $Y = X \times_C B$ is then already smooth, and \eqref{eq:base-change-iso} gives
us an isomorphism
\[
	g^{\ast} \shT_B \cong R^{n-1} h_* \omega_{Y/C}.
\]
Suppose that in the decomposition \eqref{eq:ai}, we have $a_1, \dotsc, a_r = 0$ and
$a_{r+1}, \dotsc, a_n > 0$. In that case, the bundle $g^{\ast} \shT_B$ has a
canonical trivial quotient
\[
	g^{\ast} \shT_B \to \shO_C^{\oplus r}.
\]
On the open subset $C^{\circ} = g^{-1}(B^{\circ})$, we have a polarized variation of
Hodge structure of weight $2n-1$, whose underlying local system is $R^{2n-1} h^{\circ}_*
\Q_{Y^{\circ}}$; here $h^{\circ} \colon Y^{\circ} \to C^{\circ}$ is the restriction of $h$. Our next result identifies the maximal trivial quotient of $g^{\ast} \shT_B$
as coming from the fixed part of the variation of Hodge structure on $R^{2n-1} h^{\circ}_*
\Q_{Y^{\circ}}$.

\begin{prop} \label{prop:fixed-part}
	The fixed part of the variation of Hodge structure on $R^{2n-1} h^{\circ}_* \Q_{Y^{\circ}}$ has rank
	exactly $2r$, and the Hodge bundle of type $(n,n-1)$ of the fixed part projects
	isomorphically to the maximal trivial quotient of $g^{\ast} \shT_B$.
\end{prop}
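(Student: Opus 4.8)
The plan is to deduce both assertions from the identification $g^{\ast}\shT_B \cong R^{n-1}h_*\omega_{Y/C}$ (valid here because $Y$ is smooth and $g$ meets $D$ transversely; note that $Y$ is then also compact K\"ahler, being a submanifold of $X\times\P^1$), together with the splitting of the variation on $C^{\circ}$ into fixed and moving parts. As in the rest of the section I would pass to the weight-one variation $W = R^1 h^{\circ}_*\Q_{Y^{\circ}}$, whose top Hodge bundle $F^1\shW = h^{\circ}_*\Omega^1_{Y^{\circ}/C^{\circ}}$ is canonically the restriction of $g^{\ast}\shT_B$ to $C^{\circ}$, and under which the fixed part $W^{\mathrm{fix}}$ of $W$ and its $(1,0)$-piece correspond to the fixed part of the weight-$(2n-1)$ variation and its Hodge bundle of type $(n,n-1)$. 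By the theorem of the fixed part together with Deligne's semisimplicity theorem, $W$ decomposes as a direct sum $W \cong W^{\mathrm{fix}}\oplus W'$ of polarized $\Q$-variations with $W^{\mathrm{fix}}$ constant and $W'$ having no fixed part.

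The crucial step is to lift this splitting from $C^{\circ}$ to all of $C$. For this one uses the Hodge module form of Matsushita's theorem underlying the second proof of \Cref{prop:semipositive}: $g^{\ast}\shT_B \cong R^{n-1}h_*\omega_{Y/C}$ is the relevant Hodge-filtered piece of the pushforward Hodge module $\derR h_*\Q_Y^{H}$, and since $Y$ is smooth and compact K\"ahler, Saito's decomposition theorem writes this pushforward as a direct sum of the intermediate extensions of its restriction to $C^{\circ}$, namely (up to shift and Tate twist) $\mathrm{IC}_C(W^{\mathrm{fix}})$ and $\mathrm{IC}_C(W')$, together with some Hodge modules supported on the finite set $C\setminus C^{\circ}$. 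Passing to the Hodge-filtered piece and using that $g^{\ast}\shT_B$ is locally free, the point-supported contributions must vanish, and we obtain a direct sum decomposition $g^{\ast}\shT_B \cong \mathcal{H}^{\mathrm{fix}}\oplus\mathcal{E}'$, in which $\mathcal{H}^{\mathrm{fix}}$ is the extension of the Hodge bundle of the constant variation $W^{\mathrm{fix}}$---hence the trivial bundle $(W^{\mathrm{fix}})^{1,0}\otimes\shO_C$---and $\mathcal{E}'$ is the corresponding (Koll\'ar-type) extension of $F^1\shW'$. In particular $\mathcal{H}^{\mathrm{fix}}$ is a trivial direct summand of $g^{\ast}\shT_B$ of rank $m := \dim (W^{\mathrm{fix}})^{1,0}$.

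It then suffices to prove $m = r$. Granting this, all $r$ trivial summands of $g^{\ast}\shT_B$ in \eqref{eq:ai} come from $\mathcal{H}^{\mathrm{fix}}$, so $\mathcal{E}'$ has no trivial summand; being a quotient of the semi-positive bundle $g^{\ast}\shT_B$ it is then ample, hence equal to the maximal ample subbundle $\bigoplus_{a_i>0}\shO(a_i)$, and therefore $\mathcal{H}^{\mathrm{fix}}$ maps isomorphically onto the maximal trivial quotient $g^{\ast}\shT_B\big/\bigl(\bigoplus_{a_i>0}\shO(a_i)\bigr)$; moreover the fixed part has rank $\rk W^{\mathrm{fix}} = 2m = 2r$, since $W^{\mathrm{fix}}$ is a weight-one Hodge structure and hence $h^{1,0}=h^{0,1}$. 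To obtain $m = r$ I would compute both as Hodge numbers of $Y$. On one hand $r = h^0\bigl(C,(g^{\ast}\shT_B)^{\dual}\bigr) = h^0(C, g^{\ast}\Omega^1_B)$; using Matsushita's description of $R^1 f_*\shO_X$ (equivalently, relative Serre duality for $h$ together with the Koll\'ar decomposition of $\derR h_*\omega_{Y/C}$) this equals $h^0(C, R^1 h_*\shO_Y)$, and since $h_*\shO_Y = \shO_C$ and $H^1(\P^1,\shO)=0$, the Leray spectral sequence gives $h^0(C, R^1 h_*\shO_Y) = h^1(Y,\shO_Y) = h^{0,1}(Y)$. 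On the other hand, the theorem of the fixed part identifies $W^{\mathrm{fix}}_b$ with the image of the restriction $H^1(Y,\Q)\to H^1(Y_b,\Q)$ as Hodge structures; this restriction is injective (for instance because $\P^1$ admits no nonconstant map to an abelian variety), so $W^{\mathrm{fix}}_b\cong H^1(Y,\Q)$ and $m = h^{1,0}(Y)$. Hodge symmetry on the compact K\"ahler manifold $Y$ then yields $m = r$.

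The step I expect to be the main obstacle is the lift of the fixed/moving splitting from $C^{\circ}$ to $C$. Over $C^{\circ}$ the fixed part splits off a trivial subbundle for free, but a priori this only extends to a subsheaf of $g^{\ast}\shT_B$, and controlling the behaviour at the points of $C\setminus C^{\circ}$---which may lie in the singular locus of $D$ even though $g$ is transverse to $D$---is precisely what forces the Hodge module input (Saito's decomposition theorem and the compatibility of the Hodge filtration with direct summands), exactly as in the Hodge-theoretic proof of \Cref{prop:semipositive}. By contrast, the two dimension computations at the end are routine, using only the theorem of the fixed part, the Leray spectral sequence, and Hodge symmetry on $Y$.
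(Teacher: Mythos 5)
Your argument is essentially the paper's, with two substitutions of machinery. Where you invoke Saito's decomposition theorem for $\derR h_* \Q_Y^H$ to split off the fixed part as a trivial direct summand of $g^* \shT_B \cong R^{n-1} h_* \omega_{Y/C}$, the paper gets the same summand more cheaply from Koll\'ar's formula $R^{n-1} h_* \omega_{Y/C} \cong \shVt_C^{>-1} \cap j_*(F^n \shV_C)$, using only that the canonical extension of a constant subvariation is trivial; this yields the inequality $r \geq m$ without the full decomposition theorem. For the reverse inequality, your computation $r = \hom(g^*\shT_B, \shO_C) = h^1(Y,\shO_Y)$ via Grothendieck duality and the Koll\'ar splitting of $\derR h_* \omega_{Y/C}$ is exactly the paper's; the paper then checks directly that every functional in $H^1(Y,\shO_Y)$ lifts to $H^1(Y,\C)$ and is realized by fiberwise restriction, hence factors through the fixed part, whereas you route through the global invariant cycle theorem and Hodge symmetry on $Y$. (Working with $R^1$ rather than $R^{2n-1}$ is harmless, as the section's opening paragraph notes.)

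The one step you should repair is the injectivity of $H^1(Y,\Q) \to H^1(Y_b,\Q)$, which you genuinely need to conclude $m = h^{1,0}(Y)$ rather than just $m \leq h^{1,0}(Y)$ (and the inequality $m \leq r$ you would then be left with points the same way as $r \geq m$, so the loop would not close). The parenthetical justification ``$\P^1$ admits no nonconstant map to an abelian variety'' does not address the actual danger, which is that a class in $H^1(Y,\Q)$ could come from sections of $R^1 h_* \Q$ concentrated over $C \setminus C^{\circ}$ and die on a general fiber. The fix is already contained in your duality computation: relative duality gives $R^1 h_* \shO_Y \cong (R^{n-1} h_* \omega_{Y/C})^{\dual} \cong \bigoplus_i \shO(-a_i)$ with all $a_i \geq 0$, so a global section is determined by its value at any point of $C$; since $H^1(C,\shO_C)=0$, this shows $H^1(Y,\shO_Y) \to H^1(Y_b,\shO_{Y_b})$ is injective for $b \in C^{\circ}$, and complex conjugation on $H^1(Y,\C)$ then gives injectivity on $H^{1,0}$ and hence on $H^1(Y,\Q)$. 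With that inserted, the proof is complete and equivalent to the one in the text.
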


\begin{proof}
	To simplify the notation, let us set $V_C = R^{2n-1} h^{\circ}_* \Q_{Y^{\circ}}$,
	and let us denote by $\shV_C$ the underlying flat vector bundle, and by $F^n
	\shV_C$ the Hodge bundle of
	type $(n,n-1)$. Since $h$ has relative dimension $n$, the restriction of $R^{n-1}
	h_* \omega_{Y/C}$ to the open subset $C^{\circ}$ is canonically isomorphic to $F^n
	\shV_C$. In fact, more is true. Let $\shVt_C^{>-1}$ denote Deligne's canonical
	extension of the flat vector bundle $\shV_C$; it is uniquely characterized by the
	condition that the connection on $\shV_C$ has logarithmic poles on $\shVt_C^{>-1}$ and
	that the residues of the connection at each point of $C \setminus C^{\circ}$ have
	eigenvalues contained in the interval $(-1,0]$. With this notation, Koll\'ar
	\cite{Kollar:DualizingI} proves that 
	\begin{equation} \label{eq:extension}
		R^{n-1} h_* \omega_{Y/C} \cong \shVt_C^{>-1} \cap j_*(F^n \shV_C), 
	\end{equation}
	where $j \colon C^{\circ} \to C$ is the inclusion. 

	The fixed part of $V_C$ is a $\Q$-Hodge structure of type $(n,n-1) +
	(n-1,n)$, and so its dimension is an even number, say $2s$. The fixed part then
	contributes a trivial summand of rank $2s$ to the canonical extension
	$\shVt_C^{>-1}$, and, due to \eqref{eq:extension}, a trivial summand of rank $s$ to
	the bundle $R^{n-1} h_* \omega_{Y/C}$. This gives $r \geq s$. To prove that $r
	= s$, we need to argue that any nontrivial morphism of the form
	\begin{equation} \label{eq:morphism-to-O}
		R^{n-1} h_* \omega_{Y/C} \to \shO_C
	\end{equation}
	must come from the fixed part of $V_C$. This is a straightforward computation with
	duality. First, we observe that since $C$ is a curve, the derived pushforward of
	the line bundle $\omega_{Y/C}$ decomposes as
	\[
		\derR h_* \omega_{Y/C} \cong \bigoplus_{i=0}^n R^i h_* \omega_{Y/C}[-i]
	\]
	in the derived category of coherent sheaves on $C$. (This kind of result is due to
	Koll\'ar \cite{Kollar:DualizingII} in general, but only needs elementary
	homological algebra in the case of curves.) Consequently, $R^{n-1} h_*
	\omega_{Y/C}$ is a direct summand of $\derR h_* \omega_{Y/C}[n-1]$. 
		
	Grothendieck duality for the morphism $h \colon Y \to C$ yields
	\[
		\Hom_C \Bigl( \derR h_* \omega_{Y/C}[n-1], \shO_C \Bigr)
		\cong \Hom_Y \Bigl( \omega_{Y/C}[n-1], \omega_{Y/C}[n] \Bigr)
		\cong H^1(Y, \shO_Y).
	\]
	After substituting in the above formula for the direct image and remembering that
	$H^1(C, \shO_C) = 0$ (because $C \cong \P^1$), we get an isomorphism
	\[
		\Hom_C \Bigl( R^{n-1} h_* \omega_{Y/C}, \shO_C \Bigr) \cong H^1(Y, \shO_Y).
	\]
	Now $Y$ is a compact K\"ahler manifold, and so by classical Hodge theory, the
	morphism $H^1(Y, \C) \to H^1(Y, \shO_Y)$ is surjective. After some
	compatibility checking, it follows that any morphism as in \eqref{eq:morphism-to-O} comes
	about in the following way: at any point $c \in C^{\circ}$, the restriction of
	\eqref{eq:morphism-to-O} is the linear functional $H^{n,n-1}(Y_c) \to \C$
	obtained by integration against the restriction of a fixed class in $H^1(Y, \C)$
	to the fiber $Y_c = h^{-1}(c)$. Since restriction to the smooth fibers of $h
	\colon Y \to C$ determines a morphism of variations of Hodge structure
	\[
		H^1(Y, \Q) \otimes R^{2n-1} h^{\circ}_* \Q_{Y^{\circ}} 
		\to R^{2n} h^{\circ}_* \Q_{Y^{\circ}} \cong \Q_C(-n),
	\]
	we conclude (by the semi-simplicity of polarized variations of Hodge structure)
	that the fixed part of $R^{2n-1} h^{\circ}_* \Q_{Y^{\circ}}$ must have rank at least $2r$.
\end{proof}

\begin{remark}
	It is possible that the construction with relative dualizing sheaves and base
	change still works in the analytic setting, but we have not been able to
	find a reference for the necessary technical results.
\end{remark}

In the remainder of this chapter, we explain how one can modify the argument above to
make it work when $X$ is just a compact K\"ahler manifold.  The proof relies
on a few basic properties of Hodge modules \cite{Saito:HM,Saito:MHM}. Let $M$ denote
the polarized Hodge module of weight $\dim B + (2n-1)$ with strict support $B$,
associated to the variation of Hodge structure $V = R^{2n-1} f_*^{\circ} \Q_{X^{\circ}}$
on $B^{\circ}$ by \cite[Thm.~3.21]{Saito:MHM}.  Let $\Mmod$ denote the (regular
holonomic) left $\Dmod_B$-module underlying $M$, and let $F_{\bullet} \Mmod$ be its
Hodge filtration. By Saito's direct image theorem \cite[Thm.~5.3.1]{Saito:HM} (and
\cite{Saito:Kollar} for the constant Hodge module on a compact K\"ahler manifold), we
have $F_{-n-1} \Mmod = 0$, and
\[
	\omega_B \otimes F_{-n} \Mmod \cong R^{n-1} f_* \omega_X.
\]
Together with \Cref{thm:Matsushita}, this implies that $F_{-n} \Mmod \cong \shT_B$,
and so we can also think of the tangent sheaf as being part of a Hodge module on $B$.
We may sometimes refer to the coherent $\shO_B$-module $F_{-n} \Mmod$ as the ``Hodge
module extension'' of $F^n \shV$.

There is an alternative description of the coherent sheaf $F_{-n} \Mmod$ using
resolution of singularities. Let $\mu \colon \Bt \to B$ be a log resolution of the
discriminant divisor $D = B \setminus B^{\circ}$ that is an isomorphism over the open
subset over which $D$ is a divisor with normal crossings, hence in particular
over $B^{\circ}$. Let $\shV$ denote the flat vector bundle underlying the variation
of Hodge structure $V = R^{2n-1} f_*^{\circ} \Q_{X^{\circ}}$, and let $F^n \shV =
R^n f_*^{\circ} \omega_{X^{\circ}/B^{\circ}}$ be the Hodge bundle of type
$(n,n-1)$. As before, let $\shVt^{>-1}$ denote Deligne's canonical extension of the
flat vector bundle $\shV$; this is the unique extension of $\shV$ to a vector bundle
on $\Bt$ on which the connection has logarithmic poles with residues having
eigenvalues in the interval $(-1,0]$. By Schmid's nilpotent orbit theorem, the Hodge
bundle $F^n \shV$ extends to a subbundle $F^n \shVt^{>-1} = \shVt^{>-1} \cap j_*(F^n
\shV)$, where $j \colon B^{\circ} \to \Bt$ is the inclusion. According to
\cite[Thm.~2.4 and (2.2.2)]{Saito:Kollar}, we then have
\begin{equation} \label{eq:resolution}
	\omega_B \otimes F_{-n} \Mmod 
	\cong \derR \mu_* \Bigl( \omega_{\Bt} \otimes F^n \shVt^{>-1} \Bigr).
\end{equation}
This morphism is an isomorphism over the open subset where $D = B \setminus
B^{\circ}$ is a divisor with normal crossings.

We can now present the alternative proof of \Cref{prop:semipositive}. Let $g \colon C
\to B$ be a rational curve meeting the smooth locus $B^{\circ}$ of the Lagrangian
fibration. Let $V_C$ be the variation of Hodge structure obtained by pulling back
$V = R^{2n-1} f^{\circ}_{\ast} \Q_{X^{\circ}}$ to the open subset $C^{\circ} =
g^{-1}(B^{\circ})$, and let $M_C$ be its extension to a polarized Hodge module of
weight $\dim C + (2n-1)$ with strict support $C$.  Let $(\Mmod_C, F_{\bullet}
\Mmod_C)$ be the underlying filtered left $\Dmod_C$-module. If we again write
$\shVt_C^{>-1}$ for Deligne's canonical extension, then for the same reason as above,
we have
\begin{equation} \label{eq:canext}
	F_{-n} \Mmod_C \cong F^n \shVt_C^{>-1}, 
\end{equation}
Since $V_C$ is a polarized variation of Hodge structure with quasi-unipotent local
monodromy on a curve, this vector bundle is known to be semi-positive
\cite{Peters}.\footnote{Peters claims to be working with the canonical extension for
the interval $[0,1)$, but his signs in the formula for the Hodge norm are wrong, and
so he should really be using the interval $(-1,0]$.} Alternatively, using the
notation from the proof of \Cref{prop:semipositive}, we can observe that $F^n
\shVt_C^{>-1} \cong R^{n-1} h_* \omega_{\Ytl/C}$, which is semi-positive for the
reason explained above. Either way, to relate this semi-positivity to that of $g^*
\shT_B$, we need the following variant of our earlier \Cref{lem:morphism}.

\begin{lem} \label{lem:morphism-MHM}
	There is a morphism of sheaves $F_{-n} \Mmod_C \to g^* \bigl( F_{-n}
	\Mmod \bigr)$ that restricts to the natural isomorphism over $C^{\circ} =
	g^{-1}(B^{\circ})$.
\end{lem}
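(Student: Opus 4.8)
The plan is to mimic the argument of \Cref{lem:morphism}, but with the relative duality for $f$ — which is unavailable when $X$ is merely compact K\"ahler — replaced by Grothendieck duality for a resolution of $B$, applied through the description \eqref{eq:resolution} of $F_{-n}\Mmod$. First I would record that the two sheaves to be compared are canonically identified over $C^{\circ} = g^{-1}(B^{\circ})$: by \eqref{eq:canext}, $F_{-n}\Mmod_C\cong F^n\shVt_C^{>-1}$, and over $C^{\circ}$ both this and $g^{*}(F_{-n}\Mmod)$ restrict to $F^n\shV_C = g^{*}(F^n\shV)$. Since $C\setminus C^{\circ}$ is finite, it thus suffices to construct a sheaf morphism over all of $C$ that is compatible with this identification.

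Take the log resolution $\mu\colon\Bt\to B$ to be projective, and lift $g$ to $\gt\colon C\to\Bt$ with $\mu\circ\gt = g$; this exists because $\mu$ is an isomorphism over $B^{\circ}$, $C$ is a smooth curve, and $\mu$ is proper, so the valuative criterion extends $C^{\circ}\to\mu^{-1}(B^{\circ})\subseteq\Bt$ across the remaining finitely many points. Two facts about the lift are then needed. First, a comparison of Deligne extensions: $\gt^{*}\shVt^{>-1}$ is an extension of $\shV_C$ across $C\setminus C^{\circ}$ on which the connection has logarithmic poles, but the residues are only known to have eigenvalues in some interval $(-m,0]$, since pullback multiplies each residue by the local ramification order of $\gt$; hence there is a natural inclusion $\shVt_C^{>-1}\hookrightarrow\gt^{*}\shVt^{>-1}$ of lattices, an isomorphism over $C^{\circ}$. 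Because $F^n\shVt^{>-1}$ is a holomorphic subbundle of $\shVt^{>-1}$ (Schmid's nilpotent orbit theorem), pulling back and passing to saturations inside the vector bundle $\gt^{*}\shVt^{>-1}$ on the smooth curve $C$ yields $F^n\shVt_C^{>-1}\subseteq\gt^{*}\bigl(F^n\shVt^{>-1}\bigr)$, again an isomorphism over $C^{\circ}$. Second, since $\mu$ is birational between smooth projective varieties, $\mu^{!}\shO_B\cong\shO_{\Bt}(E)$ with $E = K_{\Bt/B}$ an effective $\mu$-exceptional divisor, and $\omega_{\Bt}\cong\mu^{*}\omega_B\otimes\shO_{\Bt}(E)$.

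Now I would apply Grothendieck duality. Set $\mathcal{G} = \omega_{\Bt}\otimes F^n\shVt^{>-1}$; by \eqref{eq:resolution}, $\derR\mu_{*}\mathcal{G}\cong\omega_B\otimes F_{-n}\Mmod$ is a genuine sheaf, so the unit of the adjunction $\derR\mu_{*}\dashv\mu^{!}$ is a morphism $\mathcal{G}\to\mu^{!}\derR\mu_{*}\mathcal{G}\cong\mu^{*}\bigl(\omega_B\otimes F_{-n}\Mmod\bigr)\otimes\shO_{\Bt}(E)$. Pulling this back along $\gt$ and using $\gt^{*}\omega_{\Bt}\cong g^{*}\omega_B\otimes\gt^{*}\shO_{\Bt}(E)$, both sides carry the common invertible factor $g^{*}\omega_B\otimes\gt^{*}\shO_{\Bt}(E)$; cancelling it leaves a morphism $\gt^{*}\bigl(F^n\shVt^{>-1}\bigr)\to g^{*}(F_{-n}\Mmod)$. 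Precomposing with $F^n\shVt_C^{>-1}\hookrightarrow\gt^{*}\bigl(F^n\shVt^{>-1}\bigr)$ and using $F_{-n}\Mmod_C\cong F^n\shVt_C^{>-1}$ gives the desired morphism $F_{-n}\Mmod_C\to g^{*}(F_{-n}\Mmod)$. Over $C^{\circ}$ every ingredient is an identity — $\mu$ is an isomorphism there and $E$ is $\mu$-exceptional — so the morphism restricts to the natural isomorphism.

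The step I expect to require the most care is the variance bookkeeping. The construction must use the \emph{unit} of $\derR\mu_{*} = \mu_{!}\dashv\mu^{!}$ and not a counit of $\mu^{*}\dashv\derR\mu_{*}$; it is this choice that forces the twist by $\shO_{\Bt}(E)$ into the target and, after pulling back along $\gt$, makes the spurious line bundles cancel so that the map runs $F_{-n}\Mmod_C\to g^{*}(F_{-n}\Mmod)$ rather than the other way. One must also check the inclusion $\shVt_C^{>-1}\hookrightarrow\gt^{*}\shVt^{>-1}$ for an \emph{arbitrarily} ramified $\gt$ — the content being that additional ramification can only enlarge the pulled-back lattice — since $C$ is allowed to meet the discriminant in its singular locus, which is exactly the setting in which the relative-duality argument of \Cref{lem:morphism} is no longer available. (An alternative route would be to base change the Hodge module direct image $f_{*}\Q^{H}_{X}$ along the Cartesian square $Y = X\times_B C$, compose with the natural morphism to $\htl_{*}\Q^{H}_{\Ytl}$ for a resolution of $Y$, and extract the relevant perverse cohomology on $C$; but keeping track of the cohomological shifts there is more delicate.)
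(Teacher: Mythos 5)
Your proposal is correct and follows essentially the same route as the paper's proof: both use the resolution description \eqref{eq:resolution}, apply the adjunction between $\derR\mu_*$ and $\mu^!=\omega_{\Bt/B}\otimes\derL\mu^*$ to produce $F^n\shVt^{>-1}\to\mu^*(F_{-n}\Mmod)$, pull back along the lift $\gt$, and precompose with the comparison map $F^n\shVt_C^{>-1}\to\gt^*(F^n\shVt^{>-1})$ of canonical extensions. Your extra bookkeeping (ramification only enlarges the pulled-back lattice, saturation for the $F^n$-piece) is exactly the content the paper compresses into ``the properties of the canonical extension.''
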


\begin{proof}
	This kind of morphism is constructed in \cite[\S13-14]{Schnell:Neron} in much
	greater generality. We present here a more elementary argument
	that reduces the input from the theory of Hodge modules to a minimum. Recall
	from \eqref{eq:resolution} that we have an isomorphism
	\[
		\derR \mu_* \Bigl( \omega_{\Bt} \otimes F^n \shVt^{>-1} \Bigr)
		\cong \omega_B \otimes F_{-n} \Mmod.
	\]
	The right-adjoint of the functor $\derR \mu_*$ is the exceptional inverse image
	functor $\mu^!  = \omega_{\Bt} \otimes \mu^* \omega_B^{-1} \otimes \derL \mu^*$. After
	applying this to the above isomorphism, we get a morphism
	\[
		F^n \shVt^{>-1} \to \derL \mu^* \bigl( F_{-n} \Mmod \bigr)
	\]
	in the derived category; since $F_{-n} \Mmod \cong \shT_B$ is locally free,
	this is a morphism of sheaves
	\begin{equation} \label{eq:morphism1}
		F^n \shVt^{>-1} \to \mu^* \bigl( F_{-n} \Mmod \bigr).
	\end{equation}
	By construction, this morphism is an isomorphism outside the exceptional divisor
	of $\mu$, and therefore injective.

	The rational curve $g \colon C \to B$ intersects the smooth locus $B^{\circ}$, and
	therefore lifts uniquely to a rational curve $\gt \colon C \to \Bt$, making the
	following diagram commute:
	\[
		\begin{tikzcd}
			C \rar{\gt} \drar[bend right=20]{g} & \Bt \dar{\mu} \\
							& B
		\end{tikzcd}
	\]
	After pulling back \eqref{eq:morphism1} along $\gt$, we obtain a morphism
	\[
		\gt^* \bigl( F^n \shVt^{>-1} \bigr) \to g^* \bigl( F_{-n} \Mmod \bigr).
	\]
	The properties of the canonical extension give us a natural morphism
	$\shVt_C^{>-1} \to \gt^* \bigl( \shVt^{>-1} \bigr)$ that is an isomorphism over
	$C^{\circ}$. Consequently, we have a morphism
	\[
		F^n \shVt_C^{>-1} \to \gt^* \bigl( F^n \shVt^{>-1} \bigr),
	\]
	and after composing the two morphisms and remembering the identity in
	\eqref{eq:canext}, we get the desired morphism
	\[
		F_{-n} \Mmod_C \to g^*(F_{-n} \Mmod).
	\]
	By construction, it is an isomorphism over the open subset $C^{\circ} =
	g^{-1}(B^{\circ})$.
\end{proof}

This gives us an injective morphism from the semi-positive vector bundle 
$F_{-n} \Mmod_C \cong F^n \shVt_C^{>-1}$ into $g^*(F_{-n} \Mmod) \cong g^* \shT_B$,
and since this morphism is an isomorphism over $C^{\circ}$, we can argue as before to
finish the proof of \Cref{prop:semipositive}. 

Now suppose that the rational curve $g \colon C \to B$ intersects the discriminant
divisor $D$ transversely. This means that it stays inside the open subset where $F_{-n}
\Mmod$ is described by the canonical extension, and because the intersection is
transverse, the morphism
\[
	F^n \shVt_C^{>-1} \to g^* \shT_B
\]
is an isomorphism. Moreover, the fiber product $Y = C \times_B X$ is again smooth,
and so by \eqref{eq:extension}, we have $F^n \shVt_C^{>-1} \cong R^{n-1} h_* \omega_{Y/C}$,
where $h \colon Y \to C$ is the base change of the Lagrangian fibration.
\Cref{prop:fixed-part} therefore still applies in this setting.

\section{Foliations and variations of Hodge structure}\label{sect foliations}

The purpose of this section is to prove a couple of small technical results that relate the
variation of Hodge structure on $V = R^1 f_*^{\circ} \Q_{X^{\circ}}$ to the theory of
foliations. Here $f \colon X \to B$ is again a Lagrangian fibration from a compact
K\"ahler manifold $X$. 

Recall that $\shV$ denotes the underlying flat vector bundle, $\nabla \colon \shV \to
\Omega_{B^{\circ}}^1 \otimes \shV$ the Gauss-Manin connection, and $F^1 \shV =
f_*^{\circ} \Omega_{X^{\circ}/B^{\circ}}^1$ the Hodge bundle. We have $F^1 \shV \cong
\shT_{B^{\circ}}$, with the isomorphism induced by contraction with the holomorphic
symplectic form $\sigma \in H^0(X, \Omega_X^2)$. We are going to need the following
result by Freed \cite{Freed} that describes the Lie bracket on the
tangent bundle in terms of the connection.

\begin{lemma} \label{lem:bracket}
	For any two holomorphic vector fields $\xi, \eta \in \shT_{B^{\circ}}$, one has
	\[
		\nabla_{\xi}(\eta) - \nabla_{\eta}(\xi) = [\xi, \eta].
	\]
\end{lemma}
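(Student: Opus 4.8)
The statement to prove is Lemma~\ref{lem:bracket}: for holomorphic vector fields $\xi,\eta$ on $B^\circ$, regarded as sections of $F^1\shV$ via the symplectic contraction isomorphism $\shT_{B^\circ}\cong F^1\shV$, one has $\nabla_\xi(\eta)-\nabla_\eta(\xi)=[\xi,\eta]$. The first thing I would do is make the identification $\shT_{B^\circ}\cong F^1\shV$ completely explicit at the level of $(1,0)$-forms on the fibers: for a local vector field $\xi$ on $B^\circ$, lift it to a vector field $\tilde\xi$ on $X^\circ$, and the corresponding section of $F^1\shV=f^\circ_*\Omega^1_{X^\circ/B^\circ}$ is the fiberwise restriction of the $1$-form $\iota_{\tilde\xi}\sigma$. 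One must check this is independent of the lift (because $\sigma$ is fiberwise nondegenerate with isotropic fibers, a vertical change of lift changes $\iota_{\tilde\xi}\sigma$ by a form that vanishes when restricted to the fibers), and that it lands in $F^1$ rather than just in $\shV$ (again because the fibers are Lagrangian, so $\iota_{\tilde\xi}\sigma$ restricted to a fiber is a holomorphic $(1,0)$-form, hence closed and of Hodge type $(1,0)$).

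The heart of the argument is then a Cartan-calculus computation on $X^\circ$. I would represent the flat sections / Gauss--Manin connection in de Rham terms: $\shV=R^1f^\circ_*\C_{X^\circ}\otimes\shO_{B^\circ}$ with $\nabla$ the connection whose flat sections are the (fiberwise) closed $1$-forms with locally constant periods, computed by Lie derivative of a representing relative form along a lift of the base vector field. Concretely, choosing lifts $\tilde\xi,\tilde\eta$, the class $\nabla_\xi(\eta)$ is represented on a fiber by $\mathcal L_{\tilde\xi}(\iota_{\tilde\eta}\sigma)$ restricted to that fiber, modulo exact forms; note the lift of $[\xi,\eta]$ may be taken to be $[\tilde\xi,\tilde\eta]$. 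Using $d\sigma=0$ and Cartan's formula $\mathcal L_{\tilde\xi}=d\iota_{\tilde\xi}+\iota_{\tilde\xi}d$ one computes
\[
\mathcal L_{\tilde\xi}(\iota_{\tilde\eta}\sigma)-\mathcal L_{\tilde\eta}(\iota_{\tilde\xi}\sigma)
= \iota_{[\tilde\xi,\tilde\eta]}\sigma + d\bigl(\iota_{\tilde\xi}\iota_{\tilde\eta}\sigma\bigr),
\]
where I have used $\mathcal L_{\tilde\xi}\iota_{\tilde\eta}-\mathcal L_{\tilde\eta}\iota_{\tilde\xi}=\iota_{[\tilde\xi,\tilde\eta]}+\,\iota_{\tilde\eta}\mathcal L_{\tilde\xi}-\iota_{\tilde\xi}\mathcal L_{\tilde\eta}+\dots$—more cleanly, $\mathcal L_{\tilde\xi}\iota_{\tilde\eta}-\iota_{\tilde\eta}\mathcal L_{\tilde\xi}=\iota_{[\tilde\xi,\tilde\eta]}$ together with $\mathcal L_{\tilde\xi}\sigma=d\iota_{\tilde\xi}\sigma$ (since $d\sigma=0$), so $\mathcal L_{\tilde\xi}(\iota_{\tilde\eta}\sigma)=\iota_{[\tilde\xi,\tilde\eta]}\sigma+\iota_{\tilde\eta}d\iota_{\tilde\xi}\sigma$, and antisymmetrizing kills the ambiguity and leaves $\iota_{[\tilde\xi,\tilde\eta]}\sigma$ plus a form that, restricted to a (Lagrangian) fiber, is exact. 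Restricting to a fiber, the last term dies in cohomology, so the class of $\nabla_\xi(\eta)-\nabla_\eta(\xi)$ equals the class of $\iota_{[\tilde\xi,\tilde\eta]}\sigma$, which is exactly the section of $F^1\shV$ corresponding to $[\xi,\eta]$.

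The main obstacle I anticipate is bookkeeping rather than conceptual: (i) pinning down the precise de Rham description of the Gauss--Manin connection so that "$\nabla_\xi$" really is "restrict $\mathcal L_{\tilde\xi}$ applied to a representative, modulo exact and modulo vertical-exact forms"—one has to be careful that $\iota_{\tilde\eta}\sigma$ is only well-defined fiberwise up to forms vanishing on fibers, and to check $\mathcal L_{\tilde\xi}$ of such a "junk" form is again junk on fibers; and (ii) keeping the Cartan-calculus identities straight (signs, and the fact that $d$ here is the ambient $d$ on $X^\circ$ versus the relative de Rham differential). A clean way to organize (i) is to work on a small polydisc $U\subseteq B^\circ$ where $X^\circ|_U$ is diffeomorphic to $U\times Y_0$ and transport everything to $Y_0$, trivializing $\shV|_U$ by $H^1(Y_0)$ and making $\nabla$ literally the trivial connection in that frame, so that $\nabla_\xi(\eta)$ is the derivative of the $H^1(Y_b)$-class of $\iota_{\tilde\eta}\sigma|_{Y_b}$ in the $b$-direction—which is computed by the Lie derivative as above. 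Alternatively one can cite Freed~\cite{Freed} directly, but since the paper states the lemma with a reference, the intended "proof" is presumably precisely this short Cartan-calculus verification, and that is what I would write out.
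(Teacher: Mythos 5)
Your proposal is correct and follows essentially the same route as the paper: lift $\xi,\eta$ to vector fields on $X^\circ$, identify them with the fiberwise classes of $\iota_{\tilde\xi}\sigma$, $\iota_{\tilde\eta}\sigma$, compute the Gauss--Manin connection by differentiating a representative along the lift, and antisymmetrize a Cartan-calculus identity using $d\sigma=0$ so that the discrepancy $d\bigl(\sigma(\tilde\xi,\tilde\eta)\bigr)$ becomes exact on the (Lagrangian) fibers. The paper represents $\nabla_\xi(\eta)$ by $\iota_{\tilde\xi}(d\,\iota_{\tilde\eta}\sigma)$ and expands the exterior derivative on a test field rather than via $\mathcal{L}_{\tilde\xi}$, but these representatives differ by a fiberwise exact form, so the two computations are the same argument in cohomology.
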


\begin{proof}
	Let $U \subseteq B^{\circ}$ be an open subset and $\xi, \eta \in H^0(U, \shT_B)$
	be two holomorphic vector fields. Since $f^{\circ} \colon X^{\circ} \to B^{\circ}$
	is a fiber bundle, we can lift them to smooth vector fields $\xit, \etat \in A^0
	\bigl( f^{-1}(U), \shT_X \bigr)$; then $[\xit, \etat]$ is a smooth lifting of the
	Lie bracket $[\xi, \eta]$. The isomorphism $\shT_{B^{\circ}} \cong F^1 \shV$
	takes the holomorphic vector field $\xi$ to the fiberwise cohomology class of the
	smooth $(1,0)$-form 
	\[
		\Omega_{\xit} = \cont(\xit)(\sigma) \in A^{1,0} \bigl( f^{-1}(U) \bigr),
	\]
	obtained by contracting the vector field $\xit$ against the holomorphic symplectic
	form. In these terms, the Gauss-Manin connection $\nabla_{\xi}(\eta) \in H^0(U,
	\shV)$ is given by taking the fiberwise cohomology class of the smooth $1$-form
	\[
		\cont(\xit)(d \Omega_{\etat}) \in A^1 \bigl( f^{-1}(U) \bigr).
	\]
	Now we compute that, for any smooth vector field $\lambda$ on $f^{-1}(U)$, one has
	\begin{align*}
		\Bigl( &\cont(\xit)(d \Omega_{\etat}) - \cont(\etat)(d \Omega_{\xit}) \Bigr)(\lambda)
		= d \Omega_{\etat}(\xit, \lambda) - d \Omega_{\xit}(\etat, \lambda) \\
		&= \Bigl( \xit \cdot \sigma(\etat, \lambda) - \lambda \cdot \sigma(\etat, \xit) - \sigma
		\bigl( \etat, [\xit, \lambda] \bigr) \Bigr)
		- \Bigl( \etat \cdot \sigma(\xit, \lambda) - \lambda \cdot \sigma(\xit, \etat)
		- \sigma \bigl( \xit, [\etat, \lambda] \bigr) \Bigr) \\
		&= \sigma \bigl( [\xit, \etat], \lambda \bigr) + \lambda \cdot \sigma(\xit,
		\etat),
	\end{align*}
	using the coordinate-free formula for the exterior derivative, and the identity
	\begin{align*}
		0 = (d \sigma)(\lambda, \xit, \etat) &= \lambda \cdot \sigma(\xit, \etat) + \xit
		\cdot \sigma(\etat, \lambda) + \etat \cdot \sigma(\lambda, \xit) \\
		&+ \sigma \bigl( \lambda, [\xit, \etat] \bigr) + \sigma \bigl( \xit, [\etat,
		\lambda] \bigr) + \sigma \bigl( \etat, [\lambda, \xit] \bigr).
	\end{align*}
	This proves that $\cont(\xit)(d \Omega_{\etat}) - \cont(\etat)(d \Omega_{\xit}) =
	\Omega_{[\xit, \etat]} + d \bigl( \sigma(\xit, \etat) \bigr)$, which then gives the desired
	identity in the first cohomology of the fibers of the Lagrangian fibration.
\end{proof}

\begin{remark}
Since $\shV^{1,0} \cong \shT_{B^{\circ}}$ and $\shV^{0,1} \cong
\Omega_{B^{\circ}}^1$, this also implies that the Higgs field
\[
	\overline{\nabla} \colon \shT_{B^{\circ}} \otimes \shV^{1,0} \to \shV^{0,1}
\]
is symmetric in its two arguments. This fact is a special case of the ``cubic
condition'' of Donagi and Markman \cite{Donagi+Markman}. The identity in
\Cref{lem:bracket} is actually equivalent to the statement that the hermitian metric
on $B^{\circ}$ induced by the Hodge metric on the bundle $\shV^{1,0}$ is a K\"ahler
metric \cite{Freed}.
\end{remark}

The formula for the Lie bracket allows us to produce foliations from
subvariations of $V$, using the fact that $F^1 \shV \cong \shT_{B^{\circ}}$ is the
tangent bundle of $B^{\circ}$.

\begin{lemma} \label{lem:foliation}
	Let $p \colon U \to B^{\circ}$ be a holomorphic mapping that is locally
	biholomorphic, and let $W$ be a subvariation of Hodge structure of $V_U = p^* V$.
	Then the distribution given by the Hodge bundle $F^1 \shW \subseteq F^1 \shV_U$ is
	integrable, hence defines a foliation on $U$.
\end{lemma}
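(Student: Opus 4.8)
The plan is to show that the Hodge bundle $F^1 \shW$ is closed under the Lie bracket of vector fields on $U$, using the identification $F^1 \shV_U \cong \shT_U$ coming from the symplectic form pulled back via the local biholomorphism $p$. The key point is that the Gauss-Manin connection on $V_U = p^* V$ preserves the subvariation $W$: by definition of a subvariation of Hodge structure, the underlying flat subbundle $\shW \subseteq \shV_U$ is preserved by $\nabla$, and the Hodge filtration of $W$ is the one induced from $V_U$, so in particular $F^1 \shW = \shW \cap F^1 \shV_U$.

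First I would take two local holomorphic sections $\xi, \eta$ of $F^1 \shW \subseteq F^1 \shV_U \cong \shT_U$. Since these are in particular sections of $\shW$, and $\nabla$ preserves $\shW$, the sections $\nabla_{\xi}(\eta)$ and $\nabla_{\eta}(\xi)$ both lie in $\shW$. Then I would invoke \Cref{lem:bracket} (which applies on $U$ after pulling back, since $p$ is locally biholomorphic and the statement is local): it gives
\[
    [\xi, \eta] = \nabla_{\xi}(\eta) - \nabla_{\eta}(\xi) \in \shW.
\]
On the other hand, the Lie bracket of two vector fields is again a holomorphic vector field, hence lies in $\shT_U \cong F^1 \shV_U$. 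Therefore $[\xi, \eta] \in \shW \cap F^1 \shV_U = F^1 \shW$. This is exactly the integrability condition (Frobenius), so the distribution $F^1 \shW$ defines a foliation on $U$.

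The one subtlety to check — and the only real obstacle — is the compatibility of \Cref{lem:bracket} with the pullback along $p$. The lemma was stated for the variation $V$ on $B^{\circ}$ itself, and the isomorphism $\shT_{B^{\circ}} \cong F^1 \shV$ is induced by the symplectic form $\sigma$; after pulling back by the local biholomorphism $p$, the tangent bundle $\shT_U$ is identified with $p^* \shT_{B^{\circ}} \cong p^*(F^1 \shV) = F^1 \shV_U$, and the Gauss-Manin connection on $\shV_U$ is $p^* \nabla$. Since $p$ is a local isomorphism, the formula $\nabla_\xi(\eta) - \nabla_\eta(\xi) = [\xi,\eta]$ is purely local and transports verbatim. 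Granting this, the argument above is complete; I expect this check to be a one-line remark rather than a genuine difficulty.
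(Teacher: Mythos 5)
Your proof is correct and follows exactly the same route as the paper: identify $F^1\shV_U$ with $\shT_U$ via the pulled-back symplectic form, use that $\nabla$ preserves $\shW$, and apply \Cref{lem:bracket} to conclude $[\xi,\eta]\in\shW\cap F^1\shV_U=F^1\shW$. The compatibility of \Cref{lem:bracket} with the local biholomorphism $p$, which you flag as the one subtlety, is indeed immediate and is treated as such in the paper.
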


\begin{proof}
	Since $p$ is locally biholomorphic, we have $F^1 \shV_U = p^*(F^1 \shV) \cong p^*
	\shT_{B^{\circ}} \cong \shT_U$. Because $W$ is a subvariation of Hodge structure,
	the vector bundle $\shW$ is preserved by the Gauss-Manin connection on $\shV_U$.
	From $\shW \cap F^1 \shV_U = F^1 \shW$, we get
	\[
		[\xi, \eta] = \nabla_{\xi}(\eta) - \nabla_{\eta}(\xi) \in H^0(U, \shW) \cap
		H^0(U, F^1 \shV_U) = H^0(U, F^1 \shW)
	\]
	for any two holomorphic vector fields $\xi, \eta$ in the subspace $H^0(U, F^1
	\shW) \subseteq H^0(U, \shT_U)$. This proves that the distribution is integrable.
\end{proof}

We are also going to need the following simple fact about the integrability of the
sum of two integrable distributions.

\begin{lemma} \label{lem:integrable-sum}
	Let $F$ and $G$ be holomorphic foliations of rank $d$ on a complex manifold,
	whose sum $F + G$ is a foliation of rank $2d$. Take a leaf $L$ of $F$, and
	form the union of the leaves of $G$ through points of $L$. Then the
	resulting submanifold is a leaf of $F+G$.
\end{lemma}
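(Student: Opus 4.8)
The plan is to carry out everything inside the single leaf of $F+G$ containing $L$, reducing the problem to one about two transverse complementary foliations. Since leaves are connected and $L$ is everywhere tangent to $F \subseteq F+G$, the submanifold $L$ is contained in a unique leaf $\Lambda$ of $F+G$; moreover every leaf of $G$ through a point of $L$ is tangent to $G \subseteq F+G$ and meets $\Lambda$, hence also lies in $\Lambda$. So the union $N$ of those $G$-leaves is contained in $\Lambda$, and it suffices to prove $N = \Lambda$. Restricting $F$ and $G$ to $\Lambda$, both remain integrable of rank $d$, and the rank hypothesis on $F+G$ forces $TF \cap TG = 0$ and $TF \oplus TG = T\Lambda$ along $\Lambda$; thus $F|_\Lambda$ and $G|_\Lambda$ are transverse complementary foliations of the $2d$-dimensional manifold $\Lambda$, and $N$ is by construction a union of leaves of $G|_\Lambda$.

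First I would set up local bifoliated charts on $\Lambda$. Near a point of $\Lambda$ choose holomorphic submersions $u$ and $v$ whose fibres are the plaques of $F|_\Lambda$ and of $G|_\Lambda$ respectively; then $\ker du = TF$ and $\ker dv = TG$, so by transversality $(u,v)$ is a local biholomorphism onto a product $A \times B$ of polydiscs in which the plaques of $F|_\Lambda$ are the slices $A \times \{b\}$ and those of $G|_\Lambda$ the slices $\{a\} \times B$. Using these charts I would check that $N$ is open in $\Lambda$: given $y_0 \in N$ and a point $x_0 \in L$ on the same $G$-leaf, transport along a path in that $G$-leaf the germ at $x_0$ of the transversal $L$ (which is locally an $F$-plaque, hence transverse to $G|_\Lambda$) to a germ of transversal $T'$ through $y_0$; by the definition of holonomy every point of $T'$ lies on a $G$-leaf meeting $L$, so $T' \subseteq N$, and saturating $T'$ by plaques of $G|_\Lambda$ produces a $2d$-dimensional neighbourhood of $y_0$ contained in $N$. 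That $N$ is connected is immediate, since $L$ is connected and each $G$-leaf through a point of $L$ is connected and meets $L$.

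The hard part will be the exhaustion step. Since $\Lambda$ is connected and $N \subseteq \Lambda$ is open, connected and $G$-saturated, what remains is to show $N$ is closed in $\Lambda$, equivalently that $N$ is also $F$-saturated; once $N$ is stable under both $F$- and $G$-moves it must be all of $\Lambda$, because any two points of a leaf of $F+G$ are joined by a finite chain of $F$- and $G$-plaques. For the $F$-saturation I would, given $y \in N$ and a point $y'$ on the $F$-leaf through $y$, join $y$ to $y'$ by a path in that $F$-leaf, cover the path by finitely many bifoliated charts $A \times B$, and propagate the property ``the $G$-leaf through the current point meets $L$'' from chart to chart, using that in such a chart the trace of $L$ is a union of $F$-plaques $A \times \{b\}$ and that two points of one $F$-plaque lie on $G$-leaves that are mutually transported into each other. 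I expect this propagation to be the genuine obstacle: it is exactly here that one must use that $L$ is a \emph{maximal} leaf of $F$ rather than merely a local plaque, so that following the $F$-direction never causes the intersection point with $L$ to escape, and it is the only point in the argument that is not purely formal.
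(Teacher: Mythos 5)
Your proposal reads the lemma globally---$L$ a maximal leaf of $F$, the $G$-leaves maximal, the conclusion a maximal leaf of $F+G$---and you correctly isolate the closedness/$F$-saturation of $N$ as the crux. But you never actually prove that step; you only say you ``expect this propagation to be the genuine obstacle.'' That gap is fatal, because the global statement is false. Take the product foliations $F = \{dw = 0\}$ and $G = \{dz = 0\}$ on $\mathbb{C}^2 \setminus \{0\}$. The $F$-leaf through $(1,0)$ is $L = \{(z,0) : z \neq 0\}$, and saturating it by $G$-leaves yields $N = \{(z,w) : z \neq 0\}$, a proper open subset of the unique leaf $\mathbb{C}^2 \setminus \{0\}$ of $F+G$: the point $(0,1)$ lies on the $F$-leaf through $(1,1) \in N$, yet its $G$-leaf $\{0\}\times\mathbb{C}^*$ misses $L$. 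So $N$ is open, connected and $G$-saturated but not $F$-saturated, and the holonomy propagation you sketch cannot be made to work---maximality of $L$ does not prevent the intersection point with $L$ from escaping.

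The lemma as intended and as used is local: the paper's proof explicitly calls the leaf $S$ of $F+G$ a ``(locally defined)'' submanifold, and the lemma is applied in \Cref{lem:contradiction} only to germs (``the surface germ $S \cap U$ is swept out by deformations of the curve germ $C_0$''). In that reading the proof is exactly the easy half of your argument: in a bifoliated Frobenius chart the leaf of $F+G$ through a point of $L$ is a product $A \times B$ of dimension $2d$, the plaque of $L$ is a slice $A \times \{b_0\}$, and saturating it by the $G$-plaques $\{a\} \times B$ fills up $A \times B$ for dimension reasons. You should either restrict to germs of leaves (which is all that is needed downstream) or impose extra hypotheses (e.g.\ compactness or properness of the leaves) under which the global saturation argument could be completed.
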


\begin{proof}
	Fix a point on $L$, and let $S$ be the unique leaf of $F+G$ through that point;
	then $S$ is a (locally defined) complex submanifold of dimension $2d$, and since
	$F \subseteq F+G$, it contains $L$. The restriction $G \vert_S$ is a foliation on
	$S$, and so $S$ is locally the union of leaves of $G$. For dimension reasons,
	those leaves of $G \vert_S$ that pass through points of $L$ fill up all of $S$,
	and this gives the desired result.
\end{proof}

\begin{remark}
The proof of the lemma shows that the construction is actually symmetric in $F$ and
$G$: if we start from a leaf of $G$ that is contained in $S$, and form the union
of the leaves of $F$ through its points, we get the same submanifold $S$ as
before.
\end{remark}

Lastly, we need to analyze the behavior of the tangent bundle at smooth points of the
discriminant locus $D$. It is known that $D$ is always a divisor in $B$
\cite[Prop.~3.1]{Hwang+Oguiso}, and that the local monodromy at any smooth point of
$D$ is always nontrivial \cite[Prop.~3.2]{Hwang:base}. Our proof does not actually
rely on these two facts, but it is nevertheless useful to keep them in mind. The next
result says that if the local monodromy at a smooth point of $D$ has finite order,
then it looks like the local monodromy in an isotrivial elliptic fibration on a K3
surface.

\begin{lemma} \label{lem:finite-monodromy}
	Suppose that, in a neighborhood of a given point $b \in B$, the discriminant locus
	$D$ is smooth of dimension $n-1$, and that the local monodromy around $D$ has
	finite order. Then the local monodromy transformation has exactly two nontrivial
	eigenvalues, which are a $4$th or $6$th root of unity and its complex conjugate.
\end{lemma}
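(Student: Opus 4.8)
The plan is to reduce the statement to a local computation with a polarized variation of Hodge structure of weight $2n-1$ over a punctured disc, using the normal slice to $D$ at $b$. Concretely, since $D$ is smooth of dimension $n-1$ near $b$, I would choose a small polydisc $\Delta^n$ around $b$ in which $D = \{t_1 = 0\}$, and restrict the Lagrangian fibration to a one-dimensional slice $\Delta \hookrightarrow \Delta^n$, $t \mapsto (t, 0, \dots, 0)$, transverse to $D$. Pulling back $f$ gives a family over $\Delta$, smooth over $\Delta^* = \Delta \setminus \{0\}$, whose associated variation of Hodge structure $V'$ is the restriction of $V = R^{2n-1} f^{\circ}_* \Q_{X^{\circ}}$; its local monodromy $T$ around $0$ is, up to conjugacy, the local monodromy around $D$ at $b$, hence by hypothesis of finite order. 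It suffices to analyze $T$ acting on the single Hodge structure $H^{2n-1}$ of a nearby fiber, which carries a polarization.

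The key structural input is the isomorphism from \Cref{thm:Matsushita} together with its logarithmic refinement. After a further blow-up if necessary we may assume the slice meets $D$ at a point where $D$ is a normal crossing divisor (here it is already smooth), so $F^n\shVt'^{>-1} \cong R^{n-1}h_*\omega_{Y'/\Delta}$ restricts near $0$ to (an extension of) $\shT_B$ pulled back to the slice, by the discussion following \Cref{prop:fixed-part}. Since $T$ has finite order, Deligne's canonical extension is the \emph{unique} equivariant extension and the connection has \emph{nilpotent-free}, i.e. semisimple, residue, whose eigenvalues lie in $(-1,0]$ and are determined by the eigenvalues of $T$: an eigenvalue $e^{2\pi i \alpha}$ of $T$ with $\alpha \in (-1,0]$ contributes a residue eigenvalue $\alpha$ on the corresponding line. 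The point is that the degree-$(-1)$ summands of $F^n\shVt'^{>-1}$ — equivalently, the strictly negative residue eigenvalues — must be matched against the semi-positivity of $g^*\shT_B$: more precisely, $F^n\shVt'^{>-1}$ sits inside $g^*\shT_B$ as the canonical extension (by the transversality argument following \Cref{lem:morphism-MHM}), which forces every residue eigenvalue other than $0$ to in fact come from the Hodge-bundle drop, and a weight count on the Hodge structure $H^{2n-1}(Y'_t) \otimes \R$ at the level of the limit mixed Hodge structure pins down how many there are. I expect the cleanest route is: (i) the fixed part of $V'$ under $T$ corresponds to the rank-$2r$ trivial summand via \Cref{prop:fixed-part}, and it must be \emph{all} of $F^n\shVt'^{>-1}$ that is trivial on $C$ unless a non-$0$ residue appears; (ii) by symmetry of the polarization and the Hodge symmetry $(n,n-1)\leftrightarrow(n-1,n)$, the non-fixed eigenvalues of $T$ come in conjugate pairs $\{e^{2\pi i \alpha}, e^{-2\pi i \alpha}\}$; (iii) the Hodge-norm estimates for the canonical extension over a curve, combined with the constraint that $F^n\shVt'^{>-1}$ embeds in a semi-positive bundle of the same rank with a single degree of freedom, force all but one conjugate pair to be trivial, so $T$ has exactly two nontrivial eigenvalues $e^{\pm 2\pi i \alpha}$.

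The final numerical step is to show $\alpha$ corresponds to a $4$th or $6$th root of unity. Here I would invoke that the generic fiber of a Lagrangian fibration is an abelian variety of dimension $n$, so $H^1$ of the fiber carries a weight-one Hodge structure and the monodromy $T$ on $H^1$ is a quasi-unipotent symplectic transformation; finite order plus the constraint that it acts nontrivially on a \emph{single} conjugate pair of lines means $T|_{H^1}$ is, up to the $(n-1)$-fold cup product twist, the monodromy of a one-dimensional piece — i.e. of an elliptic curve — with finite-order monodromy. The finite-order automorphisms of a polarized weight-one Hodge structure of rank $2$ that can occur as monodromy of a degenerating family of elliptic curves (equivalently, the torsion elements of $\SL_2(\Z)$) have order $3, 4,$ or $6$, and order $3$ is conjugate to order $6$ on the nontrivial eigenvalues; the nontrivial eigenvalues are therefore primitive $4$th or $6$th roots of unity and their complex conjugates. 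The main obstacle, and the part needing real care, is step (iii): ruling out the possibility that several independent conjugate pairs of eigenvalues of $T$ are simultaneously nontrivial. The leverage is precisely that $g^*\shT_B$ has rank $n$ but the ``room'' for negative pieces is controlled — a curve transverse to $D$ has $g^*\shT_B \cong \shO(2) \oplus \shO^{n-1}$ if $g$ is a line in $\P^n$, but a priori we only know semi-positivity — so the argument must instead be internal to the variation: use that the associated graded of the limit MHR decomposes $H^{2n-1}_{\lim}$, that the polarization pairs the $\lambda$ and $\bar\lambda$ eigenspaces, and that finite order forces the weight filtration to be trivial, hence each nontrivial eigenvalue contributes a rank-one $(n,n-1)$ piece and a rank-one $(n-1,n)$ piece to a \emph{non-constant} sub-VHS — but a rank-two weight-$(2n-1)$ VHS on $\Delta^*$ with finite monodromy and nonconstant period map forces $\alpha \notin \{0\}$, and a Higgs-field/curvature computation (the Hodge metric on such a rank-two piece being the hyperbolic metric) shows at most one such piece is compatible with the embedding into $g^*\shT_B$ coming from \Cref{lem:morphism-MHM}, since that embedding is an isomorphism over $C^\circ$ and each nonconstant rank-two piece contributes strictly positive degree.
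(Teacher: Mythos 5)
Your reduction to a transverse one-dimensional slice discards exactly the structure that the paper's proof needs, and the mechanism you propose for the key step cannot work. The crux of the lemma is ruling out two or more independent conjugate pairs of nontrivial eigenvalues of $T$ --- you correctly identify this as the hard point, your step (iii) --- but your proposed leverage is a degree/curvature count for the canonical extension sitting inside $g^*\shT_B$ on a rational curve. This fails for two reasons. First, the lemma is a purely local statement at an arbitrary smooth point $b \in D$ with finite local monodromy; no rational curve of the family under consideration need pass through $b$, and a transverse disc $\Delta \hookrightarrow B$ is not a $\P^1$, so there is no degree to count. Second, even on an actual minimal rational curve the constraint is vacuous: $\deg g^*\shT_B = d+1$ can be as large as $n+1$, and \Cref{prop:semipositive} together with \Cref{lem:morphism-MHM} is perfectly consistent with several rank-two sub-variations each contributing strictly positive degree at a single puncture (indeed the generic splitting \eqref{eq:generic-decomposition} has $d$ positive summands). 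Nothing in the curve picture forces ``at most one nontrivial pair.''

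The paper's argument is genuinely $n$-dimensional and uses the special K\"ahler structure rather than positivity. Over a full neighborhood $U$ of $b$ one has $\shT_U \cong F^1\shVt_U^{>-1}$ (Matsushita), and Freed's formula (\Cref{lem:bracket}) computes the Lie bracket of vector fields as $[\xi,\eta] = \nabla_\xi(\eta) - \nabla_\eta(\xi)$. Choosing a frame $s_1,\dotsc,s_n$ of $F^1\shVt_U^{>-1}$ adapted to the eigenspace decomposition of $T$ (with residues $\alpha_j$), holomorphy of $[s_i,s_j]$ forces: whenever $\alpha_i \neq 0$, the vector field $s_j$ is tangent to $D$ for all $j \neq i$ (and $s_i$ itself is tangent if some other $\alpha_j\neq 0$). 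If two residues were nonzero, all of $s_1,\dotsc,s_n$ would be tangent to $D$, which is absurd since they span $\shT_U$. This integrability constraint is invisible on a one-dimensional slice, where every bracket vanishes. Your final numerical step (finite-order elements of $\SL_2(\Z)$, Euler $\phi$) is essentially the paper's and is fine once the ``exactly one pair'' statement is in hand, but as written the proof has a genuine gap at step (iii).
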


\begin{proof}
	Let $U$ be a neighborhood of the point in question, and suppose that $D \cap U$ is
	smooth and defined by a nontrivial holomorphic function $h \in \Gamma(U, \shO_B)$. Let $V_U$
	denote the restriction of the variation of Hodge structure $V = R^1 f_*^{\circ}
	\Q_{X^{\circ}}$ to this open subset. By Matsushita's theorem, the isomorphism
	$\shT_{U^{\circ}} \cong F^1 \shV$ extends to an isomorphism
	\[
		\shT_U \cong F^1 \shVt_U^{>-1}
	\]
	between the tangent bundle of $U$ and the subbundle $F^1 \shVt_U^{>-1}
	= \shVt_U^{>-1} \cap j_*(F^1 \shV_U)$ of the canonical extension of $(\shV_U,
	\nabla)$ along the open embedding $j \colon U^{\circ} \to U$. 

	Since the monodromy transformation $T$ has finite order, all of its eigenvalues
	are roots of unity. Moreover, the weight
	filtration defined by the logarithm of the unipotent part of $T$ is trivial, and
	so the limiting mixed Hodge structure at $b$ is a polarized Hodge structure of
	type $(1,0) + (0,1)$, and $T$ is an automorphism of this Hodge structure
	\cite[Thm.~6.16]{Schmid}. By choosing a basis that is adapted to the Hodge
	decomposition and the eigenspace decomposition of $T$, we obtain a local frame
	\[
		v_1, \dotsc, v_n, v_{n+1}, \dotsc, v_{2n} 
			\in \Gamma \bigl( U, \shVt_U^{>-1} \bigr)
	\]
	for the canonical extension, with the property that 
	\[
		\nabla(v_j) = \alpha_j \frac{dh}{h} \otimes v_j,
	\]
	where $\alpha_1, \dotsc, \alpha_n \in \Q \cap (-1,0]$. As explained in
	\cite[\S2]{Cattani+Kaplan} (and in \cite{VHS-disk} for the case of complex variations of
	Hodge structure), we can then find a collection of holomorphic functions
	$f_{j,k}$ near the point $b$, such that the $n$ sections
	\[
		s_j = v_j + \sum_{k=n+1}^{2n} f_{j,k} v_k, \quad j=1, \dotsc, n,
	\]
	define a local frame for the bundle $F^1 \shVt_U^{>-1}$, after shrinking the open
	set $U$ if necessary. Because of the isomorphism $\shT_U \cong F^1 \shVt_U^{>-1}$,
	we may also view $s_1, \dotsc, s_n$ as holomorphic vector fields on $U$ that
	generate the tangent bundle $\shT_U$.

	We now apply the formula for the Lie bracket in \Cref{lem:bracket}. It gives
	\[
		[s_i, s_j] = \nabla_{s_i}(s_j) - \nabla_{s_j}(s_i) 
		\equiv \alpha_j \frac{dh(s_i)}{h} \cdot s_j - \alpha_i \frac{dh(s_j)}{h} \cdot
		s_i \mod v_{n+1}, \dotsc, v_{2n},
	\]
	and because of the special shape of the sections $s_1, \dotsc, s_n$, the
	congruence is strong enough to imply that
	\begin{equation} \label{eq:bracket-canonical-extension}
		[s_i, s_j] = \alpha_j \frac{dh(s_i)}{h} \cdot s_j 
			- \alpha_i \frac{dh(s_j)}{h} \cdot s_i.
		\end{equation}
	Now the left-hand side is again a holomorphic vector field on $U$, and so we
	deduce that, for every pair of indices $1 \leq i < j \leq n$, either $\alpha_i = 0$, or
	$h$ divides $dh(s_j)$. This last condition means that the vector field $s_j$ is
	tangent to the hypersurface $D \cap U$. Since not all the vector fields can be
	tangent to $D$, we conclude that at most one of $\alpha_1, \dotsc, \alpha_n$ can
	be nonzero; in fact, exactly one of them has to be nonzero, because the monodromy
	transformation $T$ is nontrivial.

	To prove the remaining assertion, we observe that $T$ is defined over $\Z$ and
	that $\det T = 1$. Therefore $T$ has exactly two nontrivial eigenvalues, which are
	roots of unity and complex conjugates of each other. By checking the possible
	values of Euler's $\phi$-function, it is then easy to see that the two eigenvalues
	must be either $4$th or $6$th roots of unity.
\end{proof}

\begin{remark}
	Suppose that $\alpha_1 \neq 0$ and $\alpha_2 = \dotsb = \alpha_n = 0$.
	The identity in \eqref{eq:bracket-canonical-extension} shows that the $n-1$
	holomorphic vector fields $s_2, \dotsc, s_n$ are tangent to $D$ and 
	commute with each other. This observation will play an important role later on.
\end{remark}

\begin{remark}
	By the same method, one can show that if $T = T_s \cdot e^{2 \pi i N}$ is the
	Jordan decomposition of the monodromy transformation (with $N^2 = 0$), then
	$\operatorname{rk} N \leq 1$, and the limiting mixed Hodge structure
	contains a pure Hodge structure of weight $1$ and dimension $\geq 2n-2$. This is
	related to the classification of the general singular fiber of a Lagrangian
	fibration in \cite{Hwang+Oguiso}.
\end{remark}

\section{Deformation theory and the universal family of curves}\label{sect deformation}

We first explain why \Cref{prop:semipositive} and \Cref{prop:fixed-part}, together with
some basic deformation theory, imply that $V$ splits off a factor universally
computing the fixed part of the variation of Hodge structure on any rational curve of
minimal degree, at least on a certain finite cover.  We recommend
Koll\'ar's book \cite{kollarrational} for the necessary background on rational
curves. Throughout, $B$ is the smooth base of a Lagrangian fibration, $B^{\circ}$ is
the complement of the discriminant locus, and $n = \dim B$.

Recall that $\omega^\vee_B = \det \shT_B$ is ample \cite{matsushitafano}.  Suppose $g_0:C_0\to B$
is a rational curve meeting $B^\circ$ and of minimal anticanonical degree among such
curves. We know from \Cref{prop:semipositive} that $g_0^{\ast} \shT_B$ is
semi-positive; consequently, we must have 
\begin{equation} \label{eq:degree}
	\deg g_0^{\ast} \shT_B = d+1
\end{equation}
for some integer $d \in \{1, \dotsc, n\}$. (If the degree is $\geq n+2$, then Mori's
Bend and Break technique produces a rational curve of smaller degree through every
point of $g_0(C)$, contradicting minimality.) In view of \Cref{thm:Mori}, our goal
will be to show that $d=n$. As $B$ is Fano, it is known that a very
general rational curve of minimal anticanonical degree deforms to cover $B$ and moreover the pullback of the tangent
bundle decomposes as
\begin{equation} \label{eq:generic-decomposition}
	g_0^{\ast} \shT_B \cong 
		\shO(2) \oplus \shO(1)^{\oplus (d-1)} \oplus \shO^{\oplus (n-d)}.
\end{equation}
By \Cref{prop:fixed-part}, the restriction of the variation of Hodge structure $V$ to
such a curve therefore has a fixed part of rank exactly $2(n-d)$.

We work with the (normalized) moduli space of rational curves $\RatCurves^n(B)$,
following Koll\'ar's notation \cite[II.2.5]{kollarrational}. Let $M$ be the open
subset of $\RatCurves^n(B)$
consisting of all rational curves of anticanonical degree $d$ that intersect
$B^{\circ}$ and belong to the same connected component as $g_0$. The universal family
$\pi \colon \cC \to M$ is a $\P^1$-bundle over $M$, and we denote by $G \colon \cC
\to B$ the evaluation morphism. We use the notation $g \colon C \to B$ for the
individual curves in our family; here $C$ is the fiber of $\pi \colon \cC \to M$ over
the point in question, and $g = G \vert_C$.

For a point $g:C\to B$ in our moduli space $M$, we have an exact sequence
\[
	0\to \shT_C\xrightarrow{dg} g^*\shT_B\to N_g \to 0
\] 
and the obstructions for $M$ lie in $H^1(C, N_g)$.  This group vanishes by
\Cref{prop:semipositive}, and so $M$ is smooth at such a point, of dimension 
\[
	\dim H^0(C, N_g) = \chi(C, N_g) = n+d-2.
\]
Moreover, as $g^*\shT_B$ is globally generated and by the
minimal degree condition, a generic deformation of our initial curve $g_0$ intersects
the discriminant locus $D = B\setminus B^\circ$ transversely
\cite[II.3.7]{kollarrational}.  Finally, for any $g \colon C \to B$ in the space $M$,
and for any point $c \in C$, we have a short exact sequence
\[
	\begin{tikzcd}
		0 \rar&  H^0(C, \shT_C) \rar{r_c-dg} & T_cC\oplus H^0(C, g^*\shT_B)
		\rar{\tau} & T_c \, \cC \rar & 0
	\end{tikzcd}
\]
where $r_c$ is restriction to $c$.  The map $\tau$ is naturally interpreted as the
derivative of the map $\nu \colon C\times \Hom(C,B)\to\cC$ from the universal framed
curve, and as the map $dg_c+r_c \colon T_c C \oplus H^0(C, g^*\shT_B)\to T_{g(c)}B$ is the
derivative of the universal framed map $G\circ\nu$ \cite[II.3.4]{kollarrational}, it
follows that the induced map $T_c \, \cC\to T_{g(c)}B$ is the derivative of
$G$ at the point $c \in \cC$.  Thus, again since $g^*\shT_B$ is globally generated,
the evaluation map $G \colon \cC \to B$ is smooth at every point of $\cC$.

\begin{remark}
	The rational curves in our family are free, and so a generic curve $g \colon C \to
	B$ is an immersion \cite[II.3.14]{kollarrational}, but not necessarily an
	embedding. Of course, we will eventually show that $B \cong \P^n$ and that $M$ is
	the space of lines in $\P^n$, but one has to be careful not to make any
	unjustified assumptions during the proof.
\end{remark}

Let us summarize the conclusions that we have drawn from \Cref{prop:semipositive} and
the deformation theory of rational curves.

\begin{lemma} \label{lem:smoothness}
The moduli space $M$ is smooth of dimension $n+d-2$, the universal curve $\pi \colon
\cC \to M$ is a $\P^1$-bundle over $M$, and the evaluation morphism $G \colon \cC \to B$ is
smooth of relative dimension $d-1$.
\end{lemma}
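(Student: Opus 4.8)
The plan is to assemble the lemma from the deformation-theoretic computations laid out in the paragraphs immediately preceding the statement, organizing them around the single input that $g^* \shT_B$ is globally generated (hence has vanishing $H^1$ on $\P^1$), which is a direct consequence of \Cref{prop:semipositive} together with the degree normalization \eqref{eq:degree}. First I would record why $g^* \shT_B$ is globally generated: on $C \cong \P^1$ it splits as $\shO(a_1) \oplus \dotsb \oplus \shO(a_n)$ with all $a_i \geq 0$ by \Cref{prop:semipositive}, and since $\det g^* \shT_B$ has degree $d+1$ with $1 \leq d \leq n$, each $a_i \leq d+1$ and there is at least one $a_i \geq 1$; in any case every $a_i \geq 0$ already forces global generation and $H^1(C, g^* \shT_B) = 0$, and the same vanishing passes to the quotient $N_g$ in the normal bundle sequence $0 \to \shT_C \to g^* \shT_B \to N_g \to 0$, since $\shT_C \cong \shO(2)$ has $H^1 = 0$ as well.

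Next I would deduce smoothness of $M = \RatCurves^n(B)$-component at each of our curves. The obstruction space for deformations of $g \colon C \to B$ (with $C$ allowed to vary) is $H^1(C, N_g)$, which we have just seen vanishes; hence $M$ is smooth at each such point, with tangent space $H^0(C, N_g)$. The dimension is then computed by Riemann--Roch: $\dim_g M = \chi(C, N_g) = \chi(C, g^*\shT_B) - \chi(C, \shT_C) = (\deg g^*\shT_B + n) - 3 = (d+1) + n - 3 = n + d - 2$, using $\rk N_g = n-1$ and $\deg N_g = d+1-2 = d-1$. Since $M$ is a moduli space of rational curves on a smooth variety whose universal family over the $\Hom$-scheme is standard, $\pi \colon \cC \to M$ is a $\P^1$-bundle; this is Koll\'ar's general construction \cite[II.2.5]{kollarrational} and requires no input beyond smoothness of $M$.

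Finally I would establish smoothness of the evaluation morphism $G \colon \cC \to B$. Here the key is the short exact sequence displayed just before the lemma,
\[
	0 \to H^0(C, \shT_C) \xrightarrow{r_c - dg} T_c C \oplus H^0(C, g^* \shT_B) \xrightarrow{\tau} T_c \cC \to 0,
\]
together with the identification of $dg_c + r_c \colon T_c C \oplus H^0(C, g^*\shT_B) \to T_{g(c)} B$ with the derivative of the universal framed map, from which the induced map $T_c \cC \to T_{g(c)} B$ is exactly $dG_c$. Because $g^* \shT_B$ is globally generated, the evaluation $H^0(C, g^*\shT_B) \to (g^*\shT_B)_c = T_{g(c)}B$ is already surjective, so $dG_c$ is surjective for every $c \in \cC$; hence $G$ is smooth. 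Its relative dimension is $\dim \cC - \dim B = (\dim M + 1) - n = (n+d-2+1) - n = d-1$. The main obstacle, such as it is, is purely bookkeeping: one must be careful that $H^1(C,N_g)=0$ genuinely controls the obstructions for the \emph{normalized} space $\RatCurves^n(B)$ (as opposed to $\Hom(\P^1,B)$), and that the identification of $\tau$ with the derivative of $\nu$ and of $dg_c + r_c$ with the derivative of $G \circ \nu$ is the one in \cite[II.3.4]{kollarrational}; granting those standard facts, every assertion in the lemma follows from the global generation of $g^* \shT_B$ by the displayed Euler-characteristic count.
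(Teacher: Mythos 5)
Your proposal is correct and follows essentially the same route as the paper: unobstructedness and the dimension count from $H^1(C,N_g)=0$ via \Cref{prop:semipositive}, the $\P^1$-bundle structure from Koll\'ar's construction, and smoothness of $G$ from the displayed exact sequence together with global generation of $g^*\shT_B$. The only cosmetic quibble is that the vanishing of $H^1(C,N_g)$ follows from $H^1(C,g^*\shT_B)=0$ and the vanishing of $H^2$ on a curve, not from $H^1(C,\shT_C)=0$; this does not affect the argument.
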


We denote by $G^{\circ} \colon \cC^{\circ} \to B^{\circ}$ the base change
to $B^{\circ}$, and by $\pi^{\circ} \colon \cC^{\circ} \to M$ the projection to the
moduli space. By our choice of $d$, any rational curve of anticanonical
degree $d$ that meets $B^{\circ}$ must be irreducible and therefore cannot
degenerate; this implies that $G^\circ$ is proper.

Now the general idea is the following. Suppose that $d < n$, which means that the
variation of Hodge structure $V$ on $B^{\circ}$ has a nontrivial fixed part on a
general curve in our family. We are going to argue that there is a finite covering
space of $B^{\circ}$, constructed from the Stein factorization of $G^{\circ} \colon
\cC^{\circ} \to B^{\circ}$, on which the pullback of $V$ decomposes in a nontrivial
way. As a first step, we are going to construct such a decomposition on the universal
curve $\cC^{\circ}$ itself.

Consider the pullback $V_{\cC} = (G^{\circ})^* V$ of the variation of Hodge structure to
$\cC^{\circ}$, and its direct image $\pi_*^{\circ} V_{\cC}$. By general
theory, this is a constructible sheaf on $M$. Let $T \subseteq M$ be the (dense)
Zariski open subset where this constructible sheaf is a local system (and therefore a
variation of Hodge structure) of rank $2(n-d)$; this includes all those curves $g
\colon C \to B$ on which $g^* \shT_B$ decomposes as in
\eqref{eq:generic-decomposition}. (In fact, we will show later that this is true for
every curve, and so actually $T = M$; but this does not matter for the time being.)
By the semi-simplicity of polarized variations of Hodge structure, we get a canonical
splitting of rational variations of Hodge
structure
\begin{equation}\label{upstairs splitting}
	V_{\cC} \cong \Vfix_\cC\oplus W_\cC
\end{equation}
where $\Vfix_\cC$ is the unique subvariation of Hodge structure that agrees with the
pullback $(\pi^{\circ})^* (\pi^{\circ})_* V_{\cC}$ on the open subset
$\cC_T = (\pi^{\circ})^{-1}(T)$; this exists because the fundamental group of this open
subset surjects onto that of the complex manifold $\cC^{\circ}$. Note that
$\Vfix_{\cC}$ is itself a variation of Hodge structure of weight $1$ and rank $2(n-d)$ that
represents the ``universal'' fixed part of the restriction to the fibers. 
In fact, the decomposition in \eqref{upstairs splitting} descends to a
finite covering of $B^{\circ}$, as we now explain.

The fiber $G^{-1}(b)$ over a point $b \in B$, or rather its image in the moduli space
$M$, parametrizes all those rational curves in our family that pass
through $b$. In the case of projective space, the space of lines through a given
point is irreducible, but this might (a priori) not be the case here. We therefore
consider the Stein factorization 
\[
	\begin{tikzcd}
		\cC^{\circ} \arrow[bend left=35]{rr}{G^{\circ}} \rar{s^{\circ}} & Z^{\circ}
		\rar{p^{\circ}} & B 
	\end{tikzcd}
\]
of the proper smooth morphism $G^{\circ} \colon \cC^{\circ} \to B^{\circ}$; here
$s^{\circ}$ is smooth with connected fibers (of constant dimension $d-1$), and
$p^{\circ}$ is finite \'etale. Let
$s \colon Z \to B$ be the unique extension of this finite covering space to a finite
morphism from a normal projective variety $Z$. Because the universal curve $\cC$ is
smooth by \Cref{lem:smoothness}, we get the following commutative diagram:
\begin{equation} \label{eq:Stein-factorization}
	\begin{tikzcd}
		\cC \dar{\pi} \arrow[bend left=35]{rr}{G} \rar{s} & Z \rar{p} & B \\
		M
	\end{tikzcd}
\end{equation}

\begin{remark}
	The fact that every rational curve $g \colon C \to B$ in our family factors
	through the finite morphism $p \colon Z \to B$ means that the fundamental group of
	$C^{\circ}$ does not generate the fundamental group of $B^{\circ}$ if the degree of $p$ is positive, unlike in the
	case of lines on projective space.
\end{remark}

We again summarize all the relevant facts about the commutative diagram.

\begin{lemma}\label{lem:splitting}
	With the notation introduced above, the following is true:
\begin{enumerate}
	\item The variety $Z$ is smooth at every point of the open subset $s(\cC)$, and
		the map $p \colon Z \to B$ is \'etale there.
\item There is a splitting of variations of Hodge structure
\begin{equation}\label{downstairs splitting}
	V_Z:=(p^\circ)^*V = \Vfix_Z \oplus W_Z
\end{equation}
of weight $1$ that pulls back to \eqref{upstairs splitting}.
\item The decomposition in \eqref{downstairs splitting} induces a decomposition of
	the tangent bundle 
	\[
		\shT_{s(\cC)} \cong \shTfix \oplus \shTpos
	\]
	on $s(\cC)$, and the two summands define foliations of rank $n-d$ respectively $d$.
	\item All fibers of $\pi \colon \cC \to M$ are tangent to the foliation $\shTpos$.
\end{enumerate}
\end{lemma}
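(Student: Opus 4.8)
The plan is to establish the four parts in order. The core is a short monodromy argument for \textup{(2)} and a numerical computation on $\P^{1}$ for \textup{(4)}, while \textup{(1)} and \textup{(3)} are bookkeeping built on top of \Cref{sect tangent bundle}. For \textup{(1)}: over $B^{\circ}$ this is immediate from the defining properties of the Stein factorization, since $p^{\circ}\colon Z^{\circ}\to B^{\circ}$ is finite étale (so $Z$ is smooth on $Z^{\circ}=s(\cC^{\circ})$ and $p$ is étale there) and $s^{\circ}$ is smooth with connected fibers. For a point of $s(\cC)$ lying over the discriminant $D$, one uses that $\cC$ is smooth (\Cref{lem:smoothness}) and $G$ is smooth: since $p$ is finite, $s$ and $G$ have the same fibers locally, so the fibers of $s$ are smooth of dimension $d-1$, and together with the regularity of $\cC$ this forces $s\colon\cC\to Z$ to be a smooth morphism near $s(\cC)$. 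Hence $Z$ is regular on $s(\cC)$ (regularity descends along a smooth morphism from a regular source), and $p$ is étale there---flat by faithfully flat descent along $s$, and unramified because $\Omega^{1}_{Z/B}$ is a torsion sheaf (as $p$ is generically étale) whereas $s^{*}\Omega^{1}_{Z/B}$ injects into the locally free sheaf $\Omega^{1}_{\cC/B}$.

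For \textup{(2)}: since $G^{\circ}$ is proper and $p^{\circ}$ is finite, $s^{\circ}$ is a proper smooth morphism with connected fibers, hence a locally trivial topological fiber bundle over the connected base $Z^{\circ}$ with connected fiber $F$ (note $\cC^{\circ}$ is connected because $M$ is, and so is its image $Z^{\circ}$). The restriction $G^{\circ}\vert_{F}$ is constant, so $V_{\cC}\vert_{F}$ is a constant local system and therefore its sub-local systems $\Vfix_{\cC}\vert_{F}$ and $W_{\cC}\vert_{F}$ are constant too. Thus the monodromy of \eqref{upstairs splitting} is trivial on the image of $\pi_{1}(F)\to\pi_{1}(\cC^{\circ})$, which by the homotopy exact sequence of the fibration is precisely the kernel of $\pi_{1}(\cC^{\circ})\to\pi_{1}(Z^{\circ})$. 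Consequently $\Vfix_{\cC}$ and $W_{\cC}$ descend to local systems on $Z^{\circ}$; since $(s^{\circ})^{*}$ is faithfully flat, the variation-of-Hodge-structure structure and the direct-sum decomposition descend as well, producing the splitting \eqref{downstairs splitting} together with the fact that it pulls back to \eqref{upstairs splitting}.

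For \textup{(3)}: on $Z^{\circ}$, étaleness of $p^{\circ}$ gives $\shT_{Z^{\circ}}\cong (p^{\circ})^{*}F^{1}\shV\cong F^{1}\shV_{Z}=F^{1}\shVfix_{Z}\oplus F^{1}\shW_{Z}$, and since a polarized variation of weight $1$ and rank $2m$ has $\rk F^{1}=m$, these summands have ranks $n-d$ and $d$. To extend over $s(\cC)$, recall from \Cref{sect tangent bundle} that $\shT_{B}$ is the Hodge-module extension of $F^{1}\shV$ (the $R^{1}$-version of \Cref{thm:Matsushita}); as pure Hodge modules of a fixed weight with strict support $Z$ form a semisimple category, the splitting $V_{Z}=\Vfix_{Z}\oplus W_{Z}$ over $Z^{\circ}$ extends to a direct-sum decomposition of the pure Hodge module on $Z$, hence of the coherent sheaf which by part \textup{(1)} restricts to $\shT_{s(\cC)}$ over $s(\cC)$. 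This yields sub-bundles $\shTfix,\shTpos\subseteq\shT_{s(\cC)}$, of ranks $n-d$ and $d$ (constant on the connected set $s(\cC)$). Integrability holds on $Z^{\circ}$ by \Cref{lem:foliation} applied to the locally biholomorphic map $p^{\circ}$ and the subvariations $\Vfix_{Z},W_{Z}$, and extends over $s(\cC)$ because the Frobenius obstruction is a section of a vector bundle that vanishes on the dense open subset $Z^{\circ}$; so $\shTfix$ and $\shTpos$ are foliations of ranks $n-d$ and $d$.

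For \textup{(4)}: it suffices to prove tangency for a general fiber $g\colon C\to B$ of $\pi$, since the obstruction to tangency of the fibers of $\pi$ to $\shTpos$ is the vanishing of a morphism of vector bundles on $\cC$, namely $\shT_{\cC/M}\to s^{*}\shTfix$, and the general fiber fills a dense open subset. Take $C$ meeting $D$ transversely with $[C]\in T$. By the transverse case discussed in \Cref{sect tangent bundle}, $g^{*}\shT_{B}$ is identified with $F^{1}\shVt_{C}^{>-1}$, the bundle associated to the canonical extension of $V_{\cC}\vert_{C}$, and the decomposition $V_{\cC}\vert_{C}=\Vfix_{\cC}\vert_{C}\oplus W_{\cC}\vert_{C}$ splits it accordingly; since $[C]\in T$ the factor $\Vfix_{\cC}\vert_{C}$ is a \emph{constant} variation, so its contribution to $g^{*}\shT_{B}$ is the trivial bundle $\shO_{C}^{\oplus(n-d)}$, which (using that $p$ is étale on $s(\cC)$) is exactly the pullback of $\shTfix$ along $s\vert_{C}$. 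Composing $dg\colon\shT_{C}=\shO_{C}(2)\to g^{*}\shT_{B}$ with the projection onto this trivial summand gives a map $\shO_{C}(2)\to\shO_{C}^{\oplus(n-d)}$, which is zero because $\Hom_{\P^{1}}(\shO(2),\shO)=0$; hence $dg(\shT_{C})$ lies in the complementary summand, i.e.\ $C$ is tangent to $\shTpos$, and the bundle-map argument then upgrades this to all fibers of $\pi$. I expect the main obstacle to lie not in the homotopy-theoretic descent of \textup{(2)} but in making the Hodge-theoretic identifications in \textup{(3)}--\textup{(4)} precise along the discriminant: one must know that $F^{1}$ of the canonical extension of $\Vfix$ and the Hodge-module extension of $F^{1}\shV$ are genuinely the pieces gluing to $\shTfix$ and $\shT_{s(\cC)}$, and that the derivatives $dg$ and $d(s\vert_{C})$ are compatible with these identifications---precisely what the functoriality results of \Cref{sect tangent bundle} (notably \Cref{lem:morphism-MHM}) were set up to guarantee, so here the argument remains routine, the truly hard descent (all the way down to $B^{\circ}$) being deferred to a later section.
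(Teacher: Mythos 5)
Your treatment of parts (2)--(4) is essentially the paper's own argument: (2) is the descent of the splitting \eqref{upstairs splitting} along $s^{\circ}$ using the surjectivity of $\pi_1(\cC^{\circ}) \to \pi_1(Z^{\circ})$ (your homotopy-exact-sequence phrasing is an equivalent way of saying this), (3) uses the Hodge module extension of $F^1\shV$ together with \Cref{lem:foliation}, and (4) rests on the pullback of $\shTfix$ being trivial on a curve whose fixed part is constant, plus $\Hom_{\P^1}(\shO(2),\shO)=0$; your reduction from the general fiber to an arbitrary fiber (vanishing of a morphism of vector bundles on a dense open subset of $\cC$) is a harmless variant of the paper's argument via deformation invariance of the degree of $g^*\shTfix$ and semipositivity.

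Part (1) is where your write-up has a genuine gap. You argue that the fibers of $s$ are smooth of dimension $d-1$ and that this, together with the regularity of $\cC$, forces $s$ to be a smooth morphism. That implication is false: smoothness of the set-theoretic fibers plus regularity of the source does not yield smoothness of the morphism without flatness, and flatness of $s$ at a point $c$ with $s(c)=z$ is exactly what is in doubt when $Z$ might be singular at $z$ (miracle flatness would require the regularity, or at least Cohen--Macaulayness, of $Z$ at $z$ that you are trying to prove, so the argument is circular; if $Z$ were singular at $z$ the scheme-theoretic fiber of $s$ would acquire non-reduced structure even though the underlying set is smooth). The subsequent steps lean on this: descent of regularity along $s$ and flatness of $p$ by descent both presuppose the flatness of $s$, and the unramifiedness argument uses an injection $s^*\Omega^1_{Z/B} \to \Omega^1_{\cC/B}$, but the natural map in the cotangent sequence is not injective in general. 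The standard fix, which is presumably what the paper's one-line ``(1) follows from the fact that $G$ is smooth'' intends, is this: since $G$ is smooth at $c$, choose a germ of a submanifold $\Sigma \subseteq \cC$ through $c$ of dimension $n$ on which $G$ restricts to a biholomorphism onto an open set $U \subseteq B$; then $\sigma = s \circ (G\vert_{\Sigma})^{-1}$ is a holomorphic section of the finite map $p$ over $U$ with $\sigma(G(c)) = s(c)$, and its image is an $n$-dimensional closed analytic subset of $p^{-1}(U)$ through $s(c)$ that is isomorphic to $U$, hence smooth. Because the normal complex space $Z$ is locally irreducible at $s(c)$, $Z$ coincides with this smooth image near $s(c)$; hence $Z$ is smooth there and $p$ is a local biholomorphism, in particular \'etale.
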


\begin{proof}
	(1) follows from the fact that $G \colon \cC \to B$ is smooth. For (2), observe
	that because $\cC^\circ$ and $Z^{\circ}$ are smooth, the composition
	$\pi_1(\cC^\circ_T)\to \pi_1(\cC^\circ) \to \pi_1(Z^\circ)$ is surjective; this
	implies the claim. The restriction of $p \colon Z \to B$ to the open subset
	$s(\cC)$ is \'etale, and so
	\[
		\shT_{s(\cC)} \cong (p^* \shT_B) \vert_{s(\cC)}.
	\]
	Because $\shT_B$ is the Hodge module extension of $F^1 \shV$, it follows that
	$\shT_{s(\cC)}$ is the Hodge module extension of $F^1 \shV_Z$. The decomposition
	of the variation of Hodge structure therefore induces a decomposition of $\shT_Z$
	into a vector bundle of rank $n-d$ and a vector bundle of rank $d$. The two
	summands are foliations on $s(\cC)$ by \Cref{lem:foliation}.
	It remains to prove (4). By construction, the pullback of $\shTfix$ to a general curve
	$g \colon C \to B$ in our moduli space is a trivial bundle of rank $n-d$. Since
	$g^* \shT_B$ is always semipositive, it follows (by the deformation invariance of
	the degree) that the pullback of $\shTfix$ is trivial for
	\emph{every} curve in $M$. For degree reasons, $\shT_C \cong \shO(2)$ must
	therefore be contained in the pullback of the other summand, and so all curves in
	the universal family are indeed tangent to the foliation $\shTpos$.
\end{proof}

\section{Connected components of the family of rational curves}\label{sect connected comp}

We are going to need two additional results about the connected components of the
family of rational curves through a given point. We keep the notation introduced in
\eqref{eq:Stein-factorization}. At least generically, the fibers of $p \colon Z \to
B$ represent the different connected components of the family of rational curves
through a point. (This interpretation is slightly handicapped by the fact that
a generic rational curve in our family can a priori have nodes.) Indeed, a point $z
\in Z^{\circ}$ can be thought of as the point $p(z) \in B^{\circ}$, together with the
connected component $s^{-1}(z)$ of the fiber $G^{-1}\bigl( p(z) \bigr)$; of course,
the connected component of the family of curves through $b$ is really the image $M_z =
\pi(s^{-1}(z))$ in the moduli space $M$. For any $z\in Z$, we denote by
\begin{equation} \label{eq:Pz}
	\Pt_z:=s \bigl( \pi^{-1}(M_z) \bigr) \subseteq Z
\end{equation} 
the locus in $Z$ swept out by the curves in the given component.\footnote{This
notation is different from Hwang's notation in \cite{Hwang:base}.} 
By construction, $z \in P_z$. The following result, which is mostly due to Araujo
\cite{Araujo}, shows that $P_z$ is generically a projective space.

\begin{lemma} \label{lem:Pz}
	For any $z \in Z^{\circ}$, the following is true:
	\begin{enumerate}
	\item The subvariety $P_z \subseteq Z$ is isomorphic to $\P^d$, and 
		its normal bundle in $Z$ is a trivial bundle of rank $n-d$.
	\item At any point $z' \in P_z \cap Z^{\circ}$, the isomorphism
		\[
			T_{z'} Z \cong T_{p(z')} B \cong F^1 \shV \vert_{p(z')} 
			\cong F^1 \shV_Z \vert_{z'}
		\]
		maps the tangent space $T_{z'} P_z$ isomorphically to the fiber $F^1 \shW_Z
		\vert_{z'}$.
	\item Two subvarieties $P_z$ and $P_{z'}$ are either equal or disjoint.
\end{enumerate}
\end{lemma}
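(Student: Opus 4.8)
The plan is to set up the universal family of curves through a fixed component and analyze it using the foliation $\shTpos$ from \Cref{lem:splitting}. Fix $z \in Z^{\circ}$, and work on the open subset $s(\cC)$, where $p \colon Z \to B$ is étale and $\shT_{s(\cC)} \cong \shTfix \oplus \shTpos$. First I would observe that, by \Cref{lem:splitting}(4), every rational curve in the component $M_z$ lifts to $\cC$ and is tangent to $\shTpos$; since the curves in $M_z$ are free (with normal bundle globally generated) and sweep out $M_z$, their lifts through $z$ cover an open subset of the leaf $L_z$ of $\shTpos$ through $z$. So $P_z$ agrees, at least on $s(\cC)$, with (the closure of) this leaf. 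This simultaneously sets up parts (2) and (3): part (2) is just the statement that the tangent space to the leaf of $\shTpos$ is $F^1 \shW_Z$ by construction, and part (3) follows because two leaves of a foliation are either equal or disjoint (and one extends this to all of $Z$ using that $P_z$ is the closure of the leaf).

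For part (1) — that $P_z \cong \P^d$ with trivial normal bundle of rank $n-d$ — the plan is to run the deformation-theoretic argument on the family $M_z$ of curves through $z$ and invoke \Cref{thm:Mori} internally. The key point is that the restriction of $\shT_Z$ (equivalently $\shT_B$) to a minimal curve $g \colon C \to B$ in $M_z$ splits as $\shO(2) \oplus \shO(1)^{\oplus(d-1)} \oplus \shO^{\oplus(n-d)}$ for a general such curve (this is \eqref{eq:generic-decomposition}), and the $\shO^{\oplus(n-d)}$ factor is exactly $g^* \shTfix$ by \Cref{prop:fixed-part} and the definition of $\Vfix$. Hence $g^* \shTpos \cong \shO(2) \oplus \shO(1)^{\oplus(d-1)}$, which is the normal bundle of a line in $\P^d$. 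One then shows that $P_z$ is smooth of dimension $d$ (the tangent directions being $F^1 \shW_Z$, which is a genuine subbundle on $s(\cC)$), that the curves in $M_z$ are minimal-degree free rational curves on $P_z$ whose anticanonical degree equals $\dim P_z + 1 = d+1$ — indeed $-\deg g^* \omega_{P_z} = \deg g^* \det \shTpos = 2 + (d-1) = d+1$ — and that they pass through a general point. By \Cref{thm:Mori} applied to $P_z$ (which is uniruled by construction), we conclude $P_z \cong \P^d$. Finally, the normal bundle of $P_z$ in $Z$ is $\shTfix|_{P_z}$, and since its restriction to every line in $P_z \cong \P^n$... to every minimal curve is trivial of rank $n-d$, a vector bundle on $\P^d$ trivial on all lines is itself trivial, giving the normal bundle statement.

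The main obstacle I expect is the passage from leaves of the foliation (a local, analytic object on the open set $s(\cC)$) to the closed subvariety $P_z \subseteq Z$, and making sure the argument is not circular: a priori $P_z$ could meet the complement of $s(\cC)$ or $Z^{\circ}$ in a bad locus, and one needs the properness of $G^{\circ}$ (noted after \Cref{lem:smoothness}) and the fact that the curves in $M_z$ cannot degenerate to control this. In particular, to even apply \Cref{thm:Mori} one must know $P_z$ is a smooth \emph{projective} variety, so one should first establish that $P_z$ is the image of the proper family $s(\pi^{-1}(M_z))$, deduce it is closed and irreducible, then use the foliation structure on $P_z \cap s(\cC)$ together with the deformation theory of the curves to see it is smooth of dimension $d$ there, and finally argue smoothness everywhere via \Cref{thm:Mori} once projective-space-ness is known — a mild bootstrap that needs to be arranged carefully. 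The other technical point is checking that the minimal curves in $M_z$, viewed as curves on $P_z$, are still of minimal degree and still cover $P_z$; this follows from \eqref{eq:generic-decomposition} and the globally-generatedness of $g^* \shT_B$, but the bookkeeping should be spelled out.
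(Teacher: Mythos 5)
Your overall architecture matches the paper's: parts (2) and (3) come from identifying $P_z$ with a leaf of the foliation $\shTpos$, and part (1) comes from computing $\deg i^*\shT_{P_z}=d+1$ on the curves of the family and invoking a characterization of projective space, after which triviality of $N_{P_z|Z}\cong\shTfix|_{P_z}$ follows from triviality on lines. However, the characterization theorem you invoke is the wrong one, and this is a genuine gap. \Cref{thm:Mori} requires that \emph{every} rational curve on $P_z$ meeting a dense open subset have anticanonical degree $\geq d+1$ with respect to $\omega_{P_z}$. You only verify the degree for the curves of the family $M_z$. For an arbitrary rational curve $g\colon C\to P_z$ meeting $Z^\circ$ one has $\deg g^*\shT_{P_z}=\deg (p\circ g)^*\shT_B-\deg g^*N_{P_z|Z}$; minimality of $d+1$ controls the first term from below, but at this stage there is no upper bound on $\deg g^*N_{P_z|Z}=\deg g^*\shTfix|_{P_z}$ (triviality of the normal bundle is what you are trying to prove), so the required lower bound on $\deg g^*\shT_{P_z}$ cannot be verified. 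The paper sidesteps this entirely by using the other Cho--Miyaoka--Shepherd-Barron characterization \cite[Thm.~0.1]{cmsb}: there is an irreducible, non-degenerating rational curve of anticanonical degree $d+1$ through any two points of $P_z$ (namely the curves sweeping out $P_{z'}$ for $z'\in P_z$), which is exactly the hypothesis of that theorem and requires no control over other curves. Relatedly, your proposed ``bootstrap'' of smoothness of $P_z$ via \Cref{thm:Mori} is circular, since that theorem takes smoothness as a hypothesis; the paper gets smoothness directly from the foliation, and your worry that $P_z$ might leave $s(\cC)$ is moot since $P_z=s(\pi^{-1}(M_z))\subseteq s(\cC)$ by definition.

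A second, smaller gap: you assert that the curves through $z$ ``cover an open subset of the leaf,'' i.e.\ that $\pi^{-1}(M_z)\to P_z$ is generically finite, so that $\dim P_z=d$. This needs an argument: if a positive-dimensional subfamily of curves passed through two fixed points, Mori's Bend and Break would produce a rational curve through $p(z)$ of smaller anticanonical degree, contradicting minimality. Without $\dim P_z=d$, the ``for dimension reasons'' step identifying $T_{z'}P_z$ with the full fiber of $\shTpos$ (hence parts (2) and the smoothness of $P_z$) does not go through. Once these two points are repaired --- replace \Cref{thm:Mori} by the two-point version of CMSB, and insert the Bend-and-Break finiteness argument --- your proof coincides with the paper's.
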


\begin{proof}
	Since $G^{\circ} \colon \cC^{\circ} \to B^{\circ}$ is proper, it is clear that
	$P_z$ is projective. Let us first show that $\dim P_z = d$. Since $\pi \colon \cC
	\to M$ is a
	$\P^1$-bundle, the subvariety $s^{-1}(z)$ must be finite over its image $M_z$ in
	$M$ (because otherwise $G$ would be constant along the curve in question).
	Therefore $\dim M_z = d-1$ and $\dim \pi^{-1}(M_z) = d$. Now observe that the
	projection from $\pi^{-1}(M_z)$ to $P_z$ is finite away from $p^{-1}(z)$. Indeed,
	if the fiber over a point $z' \in P_z$ with $p(z')
	\neq p(z)$ had positive dimension, we would get a nontrivial family of rational
	curves through the two fixed points $p(z)$ and $p(z')$, and by by Mori's Bend and
	Break technique, this would produce a rational curve through $p(z)$ of smaller
	anticanonical degree, contradicting our initial choice.

	By construction, we have $P_z \subseteq s(\cC)$. From \Cref{lem:splitting}, we
	get a decomposition
	\[
		\shT_{s(\cC)} \cong \shTfix \oplus \shTpos,
	\]
	and the summand $\shTpos$ is a foliation of rank $d$. Since $P_z$ is a union of
	curves that are tangent to this foliation, it follows that the tangent space to $P_z$
	at each point is contained in the fiber of $\shTpos$. For dimension reasons, this
	means that $P_z$ is smooth and $T_{z'} P_z = \shTpos \big\vert_{z'}$ for
	every $z' \in P_z$, which gives (2). We also get
	\[
		\shT_{P_z} \cong \shTpos \big\vert_{P_z} \quad \text{and} \quad
		N_{P_z|Z} \cong \shTfix \big\vert_{P_z}.
	\]
	Moreover, any two leaves $P_z$ and $P_{z'}$ are either equal or disjoint, proving
	(3).

	Now let $g \colon C \to B$ be any curve in the subset $M_z \subseteq M$, and
	denote by $i \colon C \to P_z$ the induced morphism to $P_z$. Because we already
	know that the restriction of $\shTfix$ to every curve in $M$ is trivial, we get
	\[
		\deg i^* \shT_{P_z} = \deg i^* \shT_{s(\cC)} = \deg g^*
		\shT_B = d+1.
	\]
	This means that there is a rational curve of anticanonical degree $d+1$ through
	any two points of $P_z$ (namely the rational curves that sweep out the subvariety
	$P_{z'}$ for any $z' \in P_z$), and by our choice of $d$, these curves can
	obviously not degenerate. We can therefore apply \cite[Thm.~0.1]{cmsb} and
	conclude that $P_z \cong \P^d$ and that the curves in the subset $M_z \subseteq M$
	are exactly the lines through the point $z \in P_z$. Because $p \colon Z \to B$ is
	\'etale on the open subset $s(\cC)$, it also follows that
	\[
		g^{\ast} \shT_B \cong i^* \shT_{P_z} \oplus \shO^{\oplus(n-d)}
		\cong \shO(2) \oplus \shO^{\oplus(d-1)} \oplus \shO^{\oplus(n-d)},
	\]
	and so the pullback of the tangent bundle of $B$ actually has the same splitting
	behavior as in \eqref{eq:generic-decomposition} on \emph{every} curve in the
	universal family. 

	Finally, the normal bundle $N_{P_z|Z}$ is a vector bundle of rank $n-d$ on $P_z
	\cong \P^d$ whose restriction to every line is trivial; by general facts about
	vector bundles on projective space, it follows that $N_{P_z|Z}$ is a trivial
	bundle of rank $n-d$, and (1) is proved.
\end{proof}

\begin{remark}
	One can use the results above to show that the open subset $s(\cC) \subseteq Z$ is
	actually $\P^d$-bundle over a smooth variety of dimension $n-d$, whose fibers are the
	projective spaces $P_z$. The key point is that relevant Hilbert scheme of $Z$ is
	smooth of dimension $n-d$ at the point corresponding to $P_z$, due to the fact
	that the normal bundle of $P_z$ in $Z$ is trivial. For details, see \cite[Lem.~3.3]{Araujo}.
\end{remark}

The second result is a proposition proved by Hwang in his earlier paper
\cite{Hwang:pre-base}. As it plays an important role in our
argument, we include the proof. First a lemma.

\begin{lem} \label{lem:Hwang-intersection}
	Let $z_1,z_2 \in Z^{\circ}$ be two points. If $p(P_{z_1}) = p(P_{z_2})$, then
	$P_{z_1} = P_{z_2}$.
\end{lem}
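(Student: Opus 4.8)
The plan is to produce a point common to $P_{z_1}$ and $P_{z_2}$, for then \Cref{lem:Pz}(3) forces $P_{z_1}=P_{z_2}$. Write $P:=p(P_{z_1})=p(P_{z_2})\subseteq B$; since $P_{z_1}\cong\P^d$ maps onto it finitely, $P$ is irreducible of dimension $d$. Because replacing $z_1$ by a general point of $P_{z_1}\cap Z^\circ$ leaves $P_{z_1}$ (hence $P$) unchanged by \Cref{lem:Pz}(3), and likewise for $z_2$, I may assume that $b:=p(z_1)$ is a general point of $P$; in particular $b\in B^\circ$ and $b$ is a smooth point of $P$. Since $b\in p(P_{z_2})$, pick $z_2'\in P_{z_2}$ with $p(z_2')=b$; as $b\notin D$ we have $z_2'\in Z^\circ$. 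It now suffices to show $z_1\in P_{z_2}$: then $z_1\in P_{z_1}\cap P_{z_2}$, so $P_{z_1}=P_{z_2}$.

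For the Hodge-theoretic input: since $p$ is étale on $s(\cC)$ and $\shT_B$ is the Hodge-module extension of $F^1\shV$ (so that $\shT_{s(\cC)}\cong F^1\shV_Z$ compatibly with $\shT_{B^\circ}\cong F^1\shV$), the differential $dp$ at $z_1$ and at $z_2'$ is just the canonical identification $F^1\shV_Z|_{z_i}\cong F^1\shV|_b$. As $b$ is a smooth point of $P$ and $p(P_{z_i})=P$, the morphism $p$ sends $T_{z_1}P_{z_1}$ and $T_{z_2'}P_{z_2}$ isomorphically onto $T_bP\subseteq T_bB$; combining this with \Cref{lem:Pz}(2) gives $F^1\shW_Z|_{z_1}=T_bP=F^1\shW_Z|_{z_2'}$ as subspaces of $F^1\shV|_b$. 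A weight-one sub-Hodge structure is the sum of its $(1,0)$-part and the conjugate of its $(1,0)$-part, so in fact the sub-Hodge structures $\shW_Z|_{z_1}$ and $\shW_Z|_{z_2'}$ of $V|_b$ coincide; equivalently, the monodromy of the loop at $b$ obtained from a path in $Z^\circ$ joining $z_1$ to $z_2'$ preserves this common subspace.

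To conclude $z_1\in P_{z_2}$ I would use the minimal rational curves sweeping out the leaves. The lines through $z_1$ in $P_{z_1}\cong\P^d$ are exactly the members of the family $M_{z_1}$, and since $\omega_Z|_{P_{z_1}}\cong\omega_{\P^d}$ (using that $N_{P_{z_1}|Z}$ is trivial by \Cref{lem:Pz}(1)) and $p^*\omega_B=\omega_Z$ on $s(\cC)$, each such line maps under $p$ \emph{birationally} onto a rational curve through $b$ of minimal anticanonical degree $\deg g^*\shT_B$. Take a general line $\ell\subseteq P_{z_1}$ through $z_1$, with image $\bar\ell:=p(\ell)\subseteq P$; as a point of $\RatCurves^n(B)$, $\bar\ell$ lies in $M_{z_1}$, and $z_1$ is the unique point of $\ell$ over $b$. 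Now $\bar\ell$ is contained in $P=p(P_{z_2})$, so it admits a lift $\tilde\ell\subseteq P_{z_2}$ with $p(\tilde\ell)=\bar\ell$; if $\tilde\ell$ is again a line in $P_{z_2}$, then it too maps birationally onto $\bar\ell$, so $\ell$ and $\tilde\ell$ represent the \emph{same} point of $\RatCurves^n(B)$ and hence coincide as subvarieties of $Z$. This forces $\ell=\tilde\ell\subseteq P_{z_1}\cap P_{z_2}$, so these two leaves are not disjoint and are therefore equal by \Cref{lem:Pz}(3), and in particular $z_1\in P_{z_2}$.

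The crux, and what I expect to be the main obstacle, is exactly the claim that the lift $\tilde\ell$ can be chosen to be a \emph{line} in $P_{z_2}$ — equivalently, that the finite morphism $p|_{P_{z_2}}\colon\P^d\to P$ is birational onto its image $P$, rather than of degree $\geq 2$. One handles this using the deformation theory of the minimal rational curves together with the structure of $P_{z_2}\cong\P^d$ (whose lines are precisely $M_{z_2}$): a lift of $\bar\ell$ of higher degree, combined with the $(d-2)$-dimensional family of lines in $P_{z_2}$ through two of its points, would run afoul of Mori's bend-and-break and the minimality of $d$, exactly as in Hwang's original argument. It is in order to make this bend-and-break maneuver available inside $P$ that we arranged $b$ to be a general point of $P$ at the outset, and it is here — controlling the possibly nontrivial covering behavior of $p$ along the subvarieties $P_z$ — that the argument is genuinely delicate.
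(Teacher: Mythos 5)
Your overall route is different from the paper's, and it has a genuine gap at exactly the step you flag as the crux. Everything hinges on the claim that $p\vert_{P_{z_2}}\colon \P^d\to P$ is birational onto its image, so that the image $\bar\ell$ of a line lifts back to a \emph{line} in $P_{z_2}$. Your proposed fix via bend-and-break does not work as sketched: through two points of $\P^d$ there is a unique line (not a $(d-2)$-dimensional family), and if $p\vert_{P_{z_2}}$ had degree $k\geq 2$, the configurations one would want to degenerate --- lines through $w_1$ also meeting the finite set $p^{-1}(b)\cap P_{z_2}\setminus\{w_1\}$, or lifts of $\bar\ell$ of higher degree --- are zero-dimensional, so there is no one-parameter family to break. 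Nor is a degree-$k$ cover numerically in tension with minimality: a component of $(p\vert_{P_{z_2}})^{-1}(\bar\ell)$ of degree $e$ over $\bar\ell$ is just a degree-$e$ curve in $\P^d$ of anticanonical degree $e(d+1)$, which contradicts nothing. So the key point is asserted, not proved.

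The paper closes exactly this gap by a short topological argument rather than deformation theory: $P_{z_j}\to P^{\nu}$ (the normalization of the common image) is a finite immersion onto a normal variety, hence a finite covering space; since $P_{z_j}\cong\P^d$ is simply connected and every automorphism of $\P^d$ has a fixed point, the covering is trivial, so $P_{z_j}\to P^{\nu}$ is an isomorphism. Composing the two isomorphisms gives an isomorphism $P_{z_1}\cong P_{z_2}$ over $B$ taking lines to lines, whence $\pi(s^{-1}(P_{z_1}))=\pi(s^{-1}(P_{z_2}))$ and $P_{z_1}=P_{z_2}$. If you insert this covering-space argument in place of your bend-and-break sketch, the rest of your argument (unique lift of a point of $\RatCurves^n(B)$ to $Z$, then \Cref{lem:Pz}(3)) does go through. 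Two smaller remarks: your second paragraph (that $F^1\shW_Z\vert_{z_1}=F^1\shW_Z\vert_{z_2'}$ and the sub-Hodge structures agree) is never used afterwards and should be cut --- and one must be careful not to smuggle in \Cref{prop:Hwang-tangents} there, since that proposition is proved \emph{from} this lemma.
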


\begin{proof}
	Suppose that $p(P_{z_1}) = p(P_{z_2})$, and denote by $P$ the
	normalization of this subvariety of $B$. Since $P_{z_j} \subseteq s(\cC)$, the induced
	morphism $P_{z_j} \to P$ is a finite immersion, and therefore a finite covering space.
	But $P_{z_j} \cong \P^d$, and therefore $P \cong \P^d$; this is simply connected, and
	so $P_{z_j} \to P$ is an isomorphism. By composing one isomorphism with the
	inverse of the other, we get an isomorphism between $P_{z_1}$ and 
	$P_{z_2}$. Under the isomorphism $P_{z_j} \cong \P^d$, any line in $\P^d$ determines a
	rational curve in $B$ of anticanonical degree $d+1$. Since every automorphism of
	$\P^d$ takes lines to lines, it follows that $\pi \bigl( s^{-1}(P_{z_1}) \bigr) = \pi
	\bigl( s^{-1}(P_{z_2}) \bigr)$, and therefore $P_{z_1} = P_{z_2}$.
\end{proof}

The following result is a slight generalization of \cite[Prop.~2.2]{Hwang:pre-base}.

\begin{prop} \label{prop:Hwang-tangents}
	Let $z_1,z_2 \in Z^{\circ}$ be two points with $p(z_1) = p(z_2) = b$. If the 
	subspaces
	\[
		F^1 \shW_Z \vert_{z_1} \subseteq F^1 \shV_Z \vert_{z_1} \cong F^1 \shV \vert_b
		\quad \text{and} \quad
		F^1 \shW_Z \vert_{z_2} \subseteq F^1 \shV_Z \vert_{z_2} \cong F^1 \shV \vert_b
	\]
	have a nontrivial intersection in $F^1 \shV \vert_b$, then $P_{z_1} = P_{z_2}$.
\end{prop}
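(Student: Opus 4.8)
The statement says: if two points $z_1,z_2$ over the same $b \in B$ have $F^1\shW_Z$-subspaces in $T_bB$ that intersect nontrivially, then $P_{z_1} = P_{z_2}$. By \Cref{lem:Hwang-intersection} it suffices to show $p(P_{z_1}) = p(P_{z_2})$, i.e. that the two projective spaces $P_{z_1}, P_{z_2}$ (swept out by the rational curves through $b$ in each connected component) have the same image in $B$. The natural tool is the foliation-theoretic picture from \Cref{sect foliations}: near a smooth point of $s(\cC)$, we have the two foliations $\shTfix$ and $\shTpos$, and $P_{z}$ is (an open piece of) a leaf of $\shTpos$ through $z$ by \Cref{lem:Pz}(2). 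So $T_{z_j}P_{z_j} = F^1\shW_Z|_{z_j}$.

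**Key steps.** First I would translate everything downstairs to $B$: since $p$ is étale on $s(\cC)$, the images $p(P_{z_1})$ and $p(P_{z_2})$ are (locally near $b$) the leaves through $b$ of the two local foliations on $B^{\circ}$ obtained by pushing forward $\shTpos|_{s(\cC)}$ along the two local sections of $p$ determined by $z_1$ and $z_2$. Call these local foliations $G_1$ and $G_2$ on a neighborhood of $b$; they are both rank-$d$ integrable subbundles of $\shT_{B^{\circ}}\cong F^1\shV$, corresponding to subvariations $W^{(1)}, W^{(2)}$ of $V$ (the pushforwards of $W_Z$). By hypothesis $(G_1)_b \cap (G_2)_b \neq 0$, i.e. $F^1\shW^{(1)}|_b \cap F^1\shW^{(2)}|_b \neq 0$. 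Now I want to build a single larger integrable distribution and use \Cref{lem:integrable-sum}. The complementary foliations: $\shTfix$ pushes forward to rank-$(n-d)$ foliations $F_1, F_2$, and $G_j$ is the ``$\shTpos$'' complement, so $\shT_{B^{\circ}} = F_j \oplus G_j$ near $b$. Consider $F_1 + G_2$: if this is a foliation of rank $(n-d)+d = n$ it is everything, which is useless; the point must instead be that $F_1 = F_2$, or rather that the subvariations $V^{fix}$ agree. Indeed, $V^{fix}_Z$ is the \emph{fixed part} of the restriction to the curves, so $F^1\shVfix|_{z_j}$ is canonically the $(1,0)$-part of the fixed subspace — and crucially, by \Cref{prop:fixed-part} applied to a curve $g$ through $b$ lying in $M_{z_1}$, the fixed part of $V|_C$ is intrinsic to the curve. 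The real mechanism, following Hwang, is: take a curve $C_1$ through $b$ in the component $M_{z_1}$ (so $T_bC_1 \subseteq (G_1)_b$) and a curve $C_2$ in $M_{z_2}$. Using the nontrivial intersection of $F^1\shW_Z|_{z_1}$ and $F^1\shW_Z|_{z_2}$, I would produce a tangent direction $v \in (G_1)_b \cap (G_2)_b$, take the leaf of $G_1$ through the ambient manifold in direction $v$ — which is $P_{z_1}$ — and show it is also a leaf of $G_2$, hence equals $P_{z_2}$, by checking that $G_1$ and $G_2$ restrict to the same foliation on $P_{z_1}$: both must contain all the rational curves (lines in $P_{z_1}\cong\P^d$) since these have anticanonical degree $d+1$ and $\shTfix$ is trivial on every curve in $M$ (\Cref{lem:splitting}(4) and \Cref{lem:Pz}), forcing $\shO(2)\hookrightarrow \shTpos$; so any leaf of either foliation through a point of $P_{z_1}$ in a direction tangent to $P_{z_1}$ stays in $P_{z_1}$ and contains the line in that direction, and then \Cref{lem:Pz}(3) (leaves are equal or disjoint) pins down $P_{z_2} = P_{z_1}$.

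**The main obstacle.** The hard part is the transition from ``the subspaces $F^1\shW_Z|_{z_1}$ and $F^1\shW_Z|_{z_2}$ meet nontrivially in the single vector space $F^1\shV|_b$'' to an actual common \emph{integral subvariety}: the naive leaf of a distribution only records infinitesimal data, and $z_1 \neq z_2$ upstairs even though $p(z_1) = p(z_2)$. I expect to resolve this by working on a connected neighborhood in $Z$ that contains approach paths to both $z_1$ and $z_2$, or — more in the spirit of Hwang — by finding a rational curve whose tangent at $b$ lies in the intersection $F^1\shW_Z|_{z_1} \cap F^1\shW_Z|_{z_2}$ (possible because a minimal curve through $b$ in direction $v$ exists and, being tangent to $\shTpos$ at $z_1$ \emph{and} at $z_2$ by \Cref{lem:Pz}(2), lies in both $P_{z_1}$ and $P_{z_2}$), and then invoking \Cref{lem:Hwang-intersection}: that single common curve forces $p(P_{z_1}) \cap p(P_{z_2})$ to contain more than a point, and a dimension/irreducibility argument (both are $\P^d$'s, and two $\P^d$'s in $B$ of this minimal type sharing a line must coincide, exactly as in \Cref{lem:Pz}(3)) then gives $p(P_{z_1}) = p(P_{z_2})$, whence $P_{z_1} = P_{z_2}$ by \Cref{lem:Hwang-intersection}. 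Verifying that the minimal curve tangent to $v$ at $b$ really does lie in both $P_{z_1}$ and $P_{z_2}$ — not just tangentially — is the delicate point and is where the foliation language of \Cref{sect foliations} does the real work.
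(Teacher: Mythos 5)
Your reduction to \Cref{lem:Hwang-intersection} and your setup of the two local subvariations $W_1, W_2$ of $V\vert_U$ (obtained from $W_Z$ via the two local sections of $p$ through $z_1$ and $z_2$) match the paper, but the two load-bearing steps are not carried out, and the devices you reach for in their place do not work. First, the passage from ``$F^1\shW_1\vert_b \cap F^1\shW_2\vert_b \neq 0$'' to a positive-dimensional common piece of $p(P_{z_1})\cap p(P_{z_2})$: you propose to find a single minimal rational curve through $b$ tangent to a vector $v$ in the intersection and argue that it lies in both $P_{z_1}$ and $P_{z_2}$. But the curve in the component $M_{z_1}$ with tangent $v$ at $b$ and the curve in $M_{z_2}$ with tangent $v$ at $b$ are a priori \emph{different} curves (nothing in the paper gives uniqueness of a minimal curve in a prescribed tangent direction), and a curve tangent to a distribution at one point need not be a leaf of it. The correct mechanism, which you never invoke, is that $W_1\cap W_2$ is a sub-\emph{local system}, hence of constant rank, and it is nontrivial because its fibre at $b$ contains $v$; \Cref{lem:foliation} then makes $F^1\shW_1\cap F^1\shW_2$ an integrable distribution of positive rank on all of $U$, and its leaf through $b$ is contained in both $p(P_{z_1})$ and $p(P_{z_2})$.

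Second, even granting a common curve $Y$ in $p(P_{z_1})\cap p(P_{z_2})$, your claim that ``two $\P^d$'s of this minimal type sharing a line must coincide, exactly as in \Cref{lem:Pz}(3)'' is unjustified: \Cref{lem:Pz}(3) is about leaves of $\shTpos$ inside $Z$, where $P_{z_1}$ and $P_{z_2}$ really are distinct leaves (they contain the distinct points $z_1\neq z_2$); it says nothing about their images in $B$, and two $\P^d$'s in $B$ can certainly share a curve in general. The paper's argument here is the ampleness of $\shT_{\P^d}$: along $Y$ one has $f_1^*\shT_{P_{z_1}}\oplus\shO^{\oplus(n-d)} = f_2^*\shT_{P_{z_2}}\oplus\shO^{\oplus(n-d)}$ inside the pullback of $\shT_B$, and since an ample bundle admits no nonzero map to a trivial one, the two ample summands coincide as subbundles. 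This upgrades the hypothesis to the full equality $F^1\shW_Z\vert_{z_1} = F^1\shW_Z\vert_{z_2}$, after which the foliation argument of the first step, now applied to an intersection of rank $2d$, produces a $d$-dimensional common leaf, forcing $p(P_{z_1}) = p(P_{z_2})$; only then does \Cref{lem:Hwang-intersection} finish. You correctly flagged the first step as the delicate point but did not resolve it, and the second step is missing entirely.
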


\begin{proof}
	Since $p \colon Z \to B$ is \'etale at the point $z_1$, we can find an open
	neighborhood $U \subseteq B$ of the point $b$, and and open neighborhood $U_1
	\subseteq Z$ of the point $z_1$, such that $p \vert_{U_1} \colon U_1 \to U$ is
	biholomorphic. The pullback of $W_Z \vert_{U_1}$ along the inverse of this
	biholomorphic mapping therefore defines a subvariation of Hodge structure $W_1$ of
	$V \vert_U$, whose rank is $2d$. After applying the same construction to the point
	$z_2$, we obtain another subvariation of Hodge structure $W_2$ of $V \vert_U$,
	also of rank $2d$. Since the two Hodge bundles $F^1 \shW_1$ and $F^1 \shW_2$ have
	a nontrivial intersection at the point $b$, the intersection $W = W_1 \cap W_2$
	must be a nontrivial subvariation. According to \Cref{lem:foliation}, the
	intersection $F^1 \shW_1 \cap F^1 \shW_2 = F^1 \shW$ is therefore a foliation of
	positive rank on $U$. The leaf of the foliation through the point $b$ is contained
	in both $p(P_{z_1})$ and $p(P_{z_2})$, and so the intersection $p(P_{z_1}) \cap
	p(P_{z_2})$ must have positive dimension.

	As $P_z \to p(P_z)$ is always birational, we can therefore find a smooth projective
	curve $Y$ and two morphisms $f_1 \colon Y \to P_{z_1}$ and $f_2 \colon Y \to
	P_{z_2}$ such that $p \circ f_1 = p \circ f_2$. We can clearly arrange that the
	image of $Y$ passes through the point $b$. The following commutative diagram
	shows the relevant morphisms:
	\[
		\begin{tikzcd}
			Y \rar{f_1} \dar{f_2} & P_{z_1} \dar{p} \\
			P_{z_2} \rar{p} & B
		\end{tikzcd}
	\]
	From \Cref{lem:Pz}, we know that $p^* \shT_B \vert_{P_z} \cong \shT_{P_z} \oplus
	\shO^{\oplus(n-d)}$, and that the tangent bundle to $P_z \cong \P^d$ is the
	restriction of the bundle $\shTpos$. Since the tangent bundle of projective space
	is ample, the identity
	\[
		f_1^* \shT_{P_{z_1}} \oplus \shO^{\oplus(n-d)} =
		f_2^* \shT_{P_{z_2}} \oplus \shO^{\oplus(n-d)}
	\]
	inside the pullback of $\shT_B$ clearly implies that $f_1^* \shT_{P_{z_1}}$ and
	$f_2^* \shT_{P_{z_2}}$ are the same subbundle. But this means that $F^1 \shW_Z
	\vert_{z_1}$ and $F^1 \shW_Z \vert_{z_2}$ are equal, as subspaces of $F^1 \shV
	\vert_b$, from which it follows as above that $p(P_{z_1}) \cap p(P_{z_2})$ has
	dimension $d$, and hence that $p(P_{z_1}) = p(P_{z_2})$. We now reach the desired
	conclusion by applying \Cref{lem:Hwang-intersection}.
\end{proof}

\section{Controlling the ramification of the finite covering} \label{sect cover}

After the results in the four preceding sections, the proof of Hwang's theorem is
reduced to showing that the splitting in \Cref{lem:splitting} descends to
$B^{\circ}$ (where it contradicts the irreducibility of $V$). Our proof of this fact
is a version of \cite[\S 5]{Hwang:base}, but becomes simpler because
everything is controlled by variations of Hodge structure. The crucial step is to
control the ramification of the finite morphism $p \colon Z \to B$ via the following
result.

\begin{prop}\label{prop:Hwang}
	Let $g \colon C \to B$ be the rational curve corresponding to a general point in
	the moduli space $M$. Then the base change of the finite covering space
	$p^\circ:Z^\circ\to B^\circ$ along $g^\circ \colon C^{\circ} \to B^{\circ}$ is a
	disjoint union of copies of $C^\circ$.
\end{prop}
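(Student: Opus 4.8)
The plan is to reduce the statement to the assertion that $p$ is unramified over the generic point of the discriminant divisor $D$, and then to establish this using the geometry of the subvarieties $P_z$ together with \Cref{prop:Hwang-tangents}. Since $g\colon C\to B$ is a general member of $M$, it is an immersion meeting $D$ transversely at finitely many points $g(c_1),\dots,g(c_k)$, each a general point of the component of $D$ through it; as $\pi_1(C^\circ)$ is generated by small loops about the punctures $c_i$, and the monodromy of the base change $C^\circ\times_{B^\circ}Z^\circ\to C^\circ$ about the $i$-th such loop is the local monodromy of $p^\circ\colon Z^\circ\to B^\circ$ about $g(c_i)\in D$, the Proposition is equivalent to the vanishing of the local monodromy of $p^\circ$ about a general point of each component of $D$, i.e.\ (by Zariski--Nagata purity, since $B$ is smooth and $Z$ is normal) to $p$ being \'etale over the generic point of $D$. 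Since $B$ is simply connected, this in fact forces $Z\cong B$, so the Proposition already implies that the splitting of \Cref{lem:splitting} descends to $B^\circ$; but the whole difficulty is the \'etaleness over $D$.

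Suppose then, for contradiction, that the base change $C^\circ\times_{B^\circ}Z^\circ\to C^\circ$ has a connected component $\Gamma$ that is \'etale of degree $e\geq 2$ over $C^\circ$. Let $\phi\colon\overbar\Gamma\to C$ be the induced degree-$e$ cover of smooth curves and $\psi\colon\overbar\Gamma\to Z$ the morphism extending the projection $\Gamma\to Z^\circ$, so that $p\circ\psi=g\circ\phi$ and $\psi$ maps $\phi^{-1}(C^\circ)$ into $Z^\circ\subseteq s(\cC)$. By \Cref{lem:splitting} and \Cref{lem:Pz}, the restriction $s|_C\colon C\to Z$ of $s$ is a line in a single $\P^d$-leaf $P_{z_0}$ of the foliation $\shTpos$, so for every $c\in C^\circ$ one has $T_{g(c)}\,g(C)\subseteq F^1\shW_Z\vert_{(s|_C)(c)}$ inside $F^1\shV\vert_{g(c)}$. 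If $\psi(\overbar\Gamma)$ were contained in one leaf $P_z$, then $\psi(\overbar\Gamma)\to g(C)$, being the restriction of the birational morphism $p\vert_{P_z}$, would be birational, forcing $e=1$; hence $\psi(\overbar\Gamma)$ lies in no leaf, and so for a general point $x\in\overbar\Gamma$, writing $z=\psi(x)$, $c=\phi(x)$, we have $P_z\neq P_{z_0}$, and \Cref{prop:Hwang-tangents} gives $F^1\shW_Z\vert_z\cap F^1\shW_Z\vert_{(s|_C)(c)}=0$. Since the tangent direction of $\psi(\overbar\Gamma)$ at $z$ corresponds to $T_{g(c)}\,g(C)\subseteq F^1\shW_Z\vert_{(s|_C)(c)}$ under the \'etale identification $T_zZ\cong F^1\shV\vert_{g(c)}$, it meets $\shTpos\vert_z$ trivially, so $\psi(\overbar\Gamma)$ is everywhere transverse to $\shTpos$.

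It remains to derive a contradiction from the existence of such a transverse multisection, and the obstruction is concentrated over the points $g(c_i)\in D$, where $\psi(\overbar\Gamma)$ meets $Z\setminus Z^\circ=p^{-1}(D)$ and the foliations $\shTfix,\shTpos$ and the subvarieties $P_z$ degenerate. The plan here is to use the special local structure of $V$ near a general point of $D$: by \Cref{lem:finite-monodromy} and the remarks following it, the local monodromy $T$ of $V$ about $D$ satisfies $\rk\log T_u\leq 1$, so the limiting mixed Hodge structure is almost pure, and in a suitable local frame of $\shT_B\cong F^1\shVt^{>-1}$ near $g(c_i)$ all but one of the generating vector fields are tangent to $D$ and pairwise commute. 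Straightening these exhibits $(B,D)$ analytically near $g(c_i)$ as a product $\Delta\times(D\cap U)$ and pins down the position of $g(C)$, of the minimal curves through nearby points, and of the leaves $P_z$ inside it; feeding this back into \Cref{prop:Hwang-tangents}, applied to points of $Z$ lying over one point of $D$, should force $\psi(\overbar\Gamma)$ to be tangent to $\shTpos$ after all --- equivalently, that the connected components of the family of minimal curves through $b$ extend across $g(c_i)$ without merging --- which is the desired contradiction. This last step is the main obstacle: away from $D$ everything is essentially formal given \Cref{lem:Pz} and \Cref{prop:Hwang-tangents}, but controlling $\Gamma$ and the foliations at the discriminant requires the delicate local analysis just sketched, and it is here that one reworks, in Hodge-theoretic terms, the argument of \cite[\S5]{Hwang:base}.
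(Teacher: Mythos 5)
Your setup is sound as far as it goes: the reduction to ruling out an extra component of degree $e\geq 2$, and the observation via \Cref{prop:Hwang-tangents} that such a component, viewed in $Z$, must be transverse to the foliation $\shTpos$ away from the discriminant, closely parallel the opening moves of the paper's argument (compare \Cref{lem:constant}, where the same interplay of the two splittings of $V_R$ shows that $(g^{\circ})^*W_C\cap W_R=0$). But the proof stops exactly where the actual difficulty begins: you acknowledge that deriving a contradiction from the existence of this transverse multisection is ``the main obstacle'' and offer only a plan, and the plan you sketch --- a purely local analysis of the foliations near a general point of $D$, meant to show the components of the family of minimal curves do not merge across $D$ --- is not how the contradiction is obtained, and it is doubtful it could be: as \Cref{example} illustrates, a nontrivial cover splitting over a covering family of rational curves can genuinely exist, so no argument that ignores the global deformation theory of the family can succeed.

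Concretely, the paper's proof needs four further ingredients that are absent from your proposal. First, one upgrades the trivial intersection of the two splittings to the statement that the \emph{entire} pullback $V_R$ is a constant variation (\Cref{lem:constant}); this is global information about $R^{\circ}\to C^{\circ}$, not just pointwise transversality. Second, constancy of $V_R$ forces the local monodromy of $V$ at the points $g(c_i)\in D$ to have finite order, and \Cref{lem:finite-monodromy} then pins down its two nontrivial eigenvalues and forces $d=1$. Third, one shows the foliation $\shUPfix$ is tangent to $G^{-1}(D)$ over a small ball $M_0$ in moduli (\Cref{lem:discriminant-tangent}), so the branch points do not move and the cover $R\to\P^1$, as well as the unramified map $\pi\circ\gt$ of \Cref{lem:unbranched}, are rigid as $g$ deforms. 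Fourth --- and this is the decisive step --- one produces a one-parameter family of deformations of $g$ along which the curve $\pi(\gt(R)\cap\cC)\subseteq M$ stays fixed (\Cref{lem:contradiction}, using \Cref{lem:integrable-sum} applied to the sum of the rank-one foliations $F^1\shW_0$ and $F^1\shW_j$); sweeping the points of $R$ over $x$ along this family shows $p(P_{z_1})=\dotsb=p(P_{z_r})$, and \Cref{lem:Hwang-intersection} then gives $P_{z_1}=\dotsb=P_{z_r}$, contradicting $r\geq 2$. Without some substitute for this deformation-theoretic argument, the proposal does not establish the proposition.
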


Let $g \colon C \to B$ be a generic rational curve (in the universal family $\cC$).
Such a curve intersects the discriminant locus $D$ transversely (because all the
rational curves under consideration are free), and as $Z$ is normal, the base
change 
\[
	\begin{tikzcd}
		C \times_B Z \rar \dar & Z \dar{p} \\
		C \rar{g} & B
	\end{tikzcd}
\]
is a finite union of smooth projective curves, each with a finite morphism to $C$.
Since $G \colon \cC \to B$ factors through $p \colon Z \to B$ by construction,
exactly one of these curves is (the image of) $C \subseteq \cC$. Let $p \colon R \to
C$ be one of
the other components. If $\deg(R/C) = 1$, then $p$ is an isomorphism, and so
$R^{\circ}$ is isomorphic to $C^{\circ}$. In order to prove \Cref{prop:Hwang}, we may
therefore assume from now on that $\deg(R/C) \geq 2$; after several steps, we will
see that this assumption leads to a contradiction.  The idea is the same as in Hwang's paper:  to first reduce to the $d=1$ case (so the curves in $M$ don't deform with a fixed point), and then to show there is a deformation of $C$ for which $R\to C$ deforms trivially but for which constant sections deform along curves in the family $M$. 
 The condition $\deg(R/C)\geq 2$ then contradicts \Cref{prop:Hwang-tangents}.

\begin{lem} \label{lem:constant}
	If $\deg(R/C) \geq 2$, then the pullback of the variation of Hodge structure
	$V$ to $R^{\circ}$ is constant.
\end{lem}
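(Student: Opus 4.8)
The plan is to deduce the statement from the triviality of the monodromy of $V|_{R^{\circ}}$, which is the easy part. Suppose the underlying local system of $V|_{R^{\circ}}$ is trivial. Then the flat bundle $\shV_{R}$ is the trivial bundle $\shO_{R}^{\oplus 2n}$, and by Schmid's nilpotent orbit theorem $F^{1}\shV_{R}$ extends across $R\setminus R^{\circ}$ to a subbundle of $\shO_{R}^{\oplus 2n}$. On one hand this subbundle is semi-positive, by the argument of \Cref{prop:semipositive}; on the other hand, being a subsheaf of the slope-$0$ polystable bundle $\shO_{R}^{\oplus 2n}$, it has non-positive degree. Hence it has degree $0$, and a degree-$0$ subbundle of a trivial bundle on a projective curve is necessarily a constant subbundle: comparing $F^{1}\shV_{R}$ with the constant subbundle spanned by its fibre at a point $x$, the difference is a morphism from the slope-$0$ semistable bundle $F^{1}\shV_{R}$ to $\shO_{R}(-x)^{\oplus n}$, which must vanish. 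So the Hodge structure does not vary, and $V|_{R^{\circ}}$ is constant. It therefore suffices to show that $V|_{R^{\circ}}$ has trivial monodromy.

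Since $p^{\circ}\colon Z^{\circ}\to B^{\circ}$ is finite \'etale and $R$ is a component of $C\times_{B}Z$, the induced map $R^{\circ}\to C^{\circ}$ is finite \'etale of degree $e=\deg(R/C)\geq 2$, and $V|_{R^{\circ}}$ is the pullback of $V|_{C^{\circ}}$ along it. Writing $\rho\colon\pi_{1}(C^{\circ})\to\GL(V)$ for the monodromy of $V|_{C^{\circ}}$, and $H\leq\pi_{1}(C^{\circ})$ for the index-$e$ subgroup attached to $R^{\circ}$ (the stabilizer of a sheet of $Z^{\circ}/B^{\circ}$), we must show $\rho(H)=1$. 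Because $C$ is general it meets $D$ transversely in finitely many points of the smooth locus of $D$; hence $\pi_{1}(C^{\circ})$ is generated by the loops $\gamma_{c}$ around these points, and $H$ is generated by the loops $\gamma_{c}^{m_{c}}$, where $m_{c}\geq 1$ is the ramification index of $R\to C$ over $c$. So it is enough to prove that $\rho(\gamma_{c})^{m_{c}}=1$ for every $c\in C\cap D$.

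This last step is the crux, and is the Hodge-theoretic translation of the ramification estimate in \cite[\S5]{Hwang:base}. By Matsushita's isomorphism $\shT_{B}\cong F^{1}\shVt^{>-1}$ (\Cref{thm:Matsushita}), the connected components of the family of rational curves through points of $B$ near $b=g(c)$ are recorded by a finite set of $d$-dimensional subspaces of the canonical extension $F^{1}\shVt^{>-1}$, and the monodromy of $p^{\circ}$ around $D$ near $b$ is exactly the permutation that $\rho(\gamma_{c})$ induces on this set; in particular $m_{c}$ is the length of the orbit of the subspace coming from $R$. Now $\rho(\gamma_{c})$ is quasi-unipotent, and a unipotent automorphism cannot permute a finite family of subspaces nontrivially (any nonconstant orbit under its one-parameter unipotent subgroup is the image of an affine line in the Grassmannian, hence infinite); this forces the relevant local monodromies to be semisimple (and $p^{\circ}$ unramified over the remaining components of $D$), so that \Cref{lem:finite-monodromy} applies and $\rho(\gamma_{c})$ has finite order $3$, $4$ or $6$ with exactly two nontrivial eigenvalues. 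Using that the rational curves sweeping out $P_{z}$ meet $D$ transversely at a general point — so the subspace $F^{1}\shW_{Z}|_{z}=T_{z}P_{z}$ is not contained in the $1$-eigenspace of $\rho(\gamma_{c})$ — one gets that this orbit has the full length $m_{c}=\operatorname{ord}(\rho(\gamma_{c}))$, whence $\rho(\gamma_{c})^{m_{c}}=1$. I expect the main obstacle to be exactly this: pinning down the ramification index of the finite covering $p\colon Z\to B$ over a point of $D$ in terms of the order of the local monodromy of $V$, which rests on the fine local analysis near $D$ from \Cref{lem:finite-monodromy}—in particular the commuting vector fields tangent to $D$ produced there—and on Hwang's understanding of the subvarieties $P_{z}$.
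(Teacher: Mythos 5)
Your opening reduction (trivial monodromy of $V|_{R^{\circ}}$ implies constancy, via semipositivity of the extended Hodge bundle sitting inside the trivial flat bundle) is sound. The fatal gap is in the second paragraph: you claim that $H=\pi_1(R^{\circ})$ is generated by the powers $\gamma_c^{m_c}$ of loops around the punctures, so that killing the local monodromies of $V|_{R^{\circ}}$ would kill all of $\rho|_H$. This is false unless $R$ has genus $0$. The curve $R$ is a degree-$e$ cover of $C\cong\P^1$ branched over points of $C\cap g^{-1}(D)$, and Riemann--Hurwitz in general forces $R$ to have positive genus; then $\pi_1(R^{\circ})$ contains handle generators whose images in $\pi_1(C^{\circ})$ are words in the $\gamma_c$ that are not conjugates of powers of puncture loops, and no local analysis near $D$ controls their monodromy. (Even in genus $0$ one only gets that $H$ is generated by \emph{conjugates} of the $\gamma_c^{m_c}$ — harmless — but the genus issue is not.) Since nonconstant weight-one variations of Hodge structure on compact curves exist, triviality of all local monodromies genuinely does not imply constancy, so the strategy of deducing the lemma from purely local data near the discriminant cannot be completed. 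Separately, several assertions in your third paragraph are unproved (that $\rho(\gamma_c)$ acting on the finite set of subspaces $F^1\shW_Z|_z$ realizes the sheet permutation of $p^{\circ}$; that this forces semisimplicity; that the orbit length equals $\operatorname{ord}\rho(\gamma_c)$), but these are secondary.

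The paper's argument is global rather than local, and uses the hypothesis $\deg(R/C)\geq 2$ in an essential way that your proposal does not. It plays two splittings of $V_R$ against each other: the restriction to $R^{\circ}$ of the splitting \eqref{downstairs splitting} on $Z^{\circ}$, namely $V_R=\Vfix_R\oplus W_R$, and the pullback along $p^{\circ}\colon R^{\circ}\to C^{\circ}$ of $V_C=\Vfix_C\oplus W_C$. Applying \Cref{prop:Hwang-tangents} twice shows first that $W_R$ meets the pullback of $W_C$ trivially, so $W_R$ injects into the constant variation $(p^{\circ})^*\Vfix_C$ and is itself constant, and second that $W_R\cap(p^{\circ})^*\Vfix_C=0$ (this is where $\deg(R/C)\geq 2$ enters), so that $V_R=W_R\oplus(p^{\circ})^*\Vfix_C$ is a sum of two constant variations. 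If you want to salvage your approach, you would need some such global input; the fixed part $\Vfix_C$ and \Cref{prop:Hwang-tangents} are exactly what replaces your puncture-by-puncture analysis.
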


\begin{proof}
	We use the superscript ``$\circ$'' to denote the base change to the open subset
	$B^{\circ}$. We have the following commutative diagram:
	\[
		\begin{tikzcd}
			R^{\circ} \dar{p^{\circ}} \rar & Z^{\circ} \dar{p^{\circ}} \\
			C^{\circ} \rar{g^{\circ}} & B^{\circ}
		\end{tikzcd}
	\]
  We obtain two decompositions as variations of Hodge structure of
  $V_R:=V_Z \vert_{R^{\circ}}$. One decomposition $V_R \cong \Vfix_R \oplus W_R$
  comes from restricting \eqref{downstairs splitting} to $R^{\circ}$; the other comes
  from pulling back $V_C = (g^{\circ})^* V \cong \Vfix_C \oplus W_C$ along the map $p^{\circ}
  \colon R^{\circ} \to C^{\circ}$. Note that both $W_C$ and $W_R$ are variations of
  Hodge structure of rank $2d$, where $d$ is the integer from \eqref{eq:degree}.  We
  now play the two decompositions off against each other.

  Consider the intersection $(g^{\circ})^* W_C \cap W_R$, which is again a
  subvariation of Hodge structure of $V_R$. If it is nontrivial, then the
  intersection of the two Hodge bundles $(g^{\circ})^* F^1 \shW_C$ and $F^1 \shW_R$
  is a vector bundle of positive rank on all of $R^{\circ}$. According to
  \Cref{prop:Hwang-tangents}, this gives us $P_{\gt(y)} = P$ for every $y \in
  R^{\circ}$, where $P$ is the projective space containing our original curve. But
  this is clearly impossible. The conclusion is that $(g^{\circ})^* W_C$
  and $W_R$ intersect trivially. This makes the composition
  \[
	  W_R \to V_R \to (p^{\circ})^* \Vfix_C
  \]
  of the inclusion and the projection injective. As $\Vfix_C$ is a constant
  variation of Hodge structure on $C^{\circ}$, it follows that $W_R$ is constant,
  too.

  Now we claim that the intersection of the subvariations $W_R$ and $(p^{\circ})^*
  \Vfix_C$ is also trivial. Otherwise, the Hodge bundle $F^1 \shW_R$ contains a
  subbundle of positive rank that is the pullback of a trivial bundle from
  $C^{\circ}$, which means that the two subspaces $F^1 \shW_R \vert_y$
  and $F^1 \shW_R \vert_{y'}$ intersect inside $F^1 \shV_{p(y)}$, for any
  two points $y,y' \in R^{\circ}$ such that $p(y) = p(y')$. By \Cref{prop:Hwang-tangents},
  it follows that $P_{\gt(y)} = P_{\gt(y')}$, and as $\deg(R/C) \geq 2$, this is clearly
  impossible.  The conclusion is that, for dimension reasons, 
  \[
	  V_R \cong W_R \oplus (p^{\circ})^* \Vfix_C
  \]
  is the sum of two constant variations of Hodge structure on $R$.
\end{proof}
\begin{remark}\label{rmk:nonisotrivial is done}\Cref{lem:constant} in particular implies there is a covering family of curves on which $V$ is isotrivial, which contradicts the main result of \cite{Bakker:Matsushita} (and therefore proves \Cref{prop:Hwang}) unless the Lagrangian fibration is isotrivial.  In the isotrivial case the foliations all come from a flat structure on $\shT_{B^\circ}$, and are therefore easier to think about, but we do not in fact use this reduction.
    
\end{remark}
The next step is to show that $d=1$; this means that, in our family, the are only
finitely many rational curves through any given point of $B^{\circ}$.

\begin{lem}
	In the above situation, we must have $d=1$.
\end{lem}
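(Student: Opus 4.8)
The plan is to argue by contradiction: assume $d \geq 2$, keeping the standing hypothesis $\deg(R/C) \geq 2$. By \Cref{lem:constant} the pulled-back variation $V_R$ is constant, so the monodromy of $W_C$ on $C^{\circ}$ is trivial on the finite-index subgroup $\operatorname{im}\bigl(\pi_1(R^{\circ})\to\pi_1(C^{\circ})\bigr)$ and hence finite. The role of the hypothesis $d\geq 2$ is that, by \Cref{lem:Pz}, a general curve $g\colon C\to B$ in $M$ is one of the lines of a projective space $P=P_{z_0}\cong\P^d$ contained in $s(\cC)$, with $\shT_P\vert_{P^{\circ}}\cong F^1\shW_Z\vert_{P^{\circ}}$, with $F^1\Vfix_Z\vert_{P^{\circ}}$ trivial, and with $F^1\shW_Z\vert_\ell\cong\shO(2)\oplus\shO(1)^{\oplus(d-1)}$ on every line $\ell\subseteq P$; here $P^{\circ}:=P\cap Z^{\circ}=P\setminus D_P$ and $D_P:=P\cap p^{-1}(D)$. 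Since $D\neq\emptyset$, the divisor $D_P$ is a nonempty hypersurface in $P\cong\P^d$, and we may take $C$ to be a general line through a point $z_0\in P^{\circ}$.

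The first step is to spread the finiteness of the monodromy from $C^{\circ}$ to all of $P^{\circ}$. Because $d\geq 2$ and $C$ is a general line, the inclusion $C^{\circ}\hookrightarrow P^{\circ}$ induces a surjection $\pi_1(C^{\circ})\twoheadrightarrow\pi_1(P^{\circ})$ by the Zariski--Lefschetz theorem for complements of hypersurfaces in projective space. Hence the monodromy of $W_Z\vert_{P^{\circ}}$ has the same image as that of $W_C$ and is therefore finite. Passing to a finite covering $\mu\colon Q\to P$ that trivializes this monodromy (after normalizing, and resolving if necessary), the pullback $W_Q$ becomes a constant variation of Hodge structure on $Q^{\circ}:=\mu^{-1}(P^{\circ})$, so its Hodge bundle extends to a trivial bundle on $Q$.

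The contradiction should then come from positivity. Over $Q^{\circ}$ the Hodge bundle $F^1\shW_Q$ coincides with $\mu^{*}\bigl(\shT_{\P^d}\bigr)$, which is ample, so the two bundles differ only by a modification supported on $\mu^{-1}(D_P)$. Computing the degree of $F^1\shW_Q$ on the preimage of a general line $\ell\subseteq P$, and using \Cref{lem:finite-monodromy} to control the residues of the Gauss--Manin connection along $D_P$ (they must lie in $\{-\tfrac14,-\tfrac34,-\tfrac16,-\tfrac56\}$, so the boundary correction at each point over $D_P$ is strictly smaller than the ramification index there), one is left with the requirement that $\deg\bigl(\shT_{\P^d}\vert_\ell\bigr)=d+1$ be absorbed entirely by these corrections; together with the geometry of the intersection $D\cdot g(C)$, this should be impossible. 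I expect this last numerical step to be the main obstacle: it requires careful bookkeeping of the canonical extensions across the (possibly singular) divisor $D_P$, and it is essentially the Hodge-theoretic reinterpretation of the reduction carried out in \cite[\S5]{Hwang:base}. A shorter but less self-contained alternative would be to observe that \Cref{lem:constant} produces a covering family of curves on which $V$ is isotrivial, which contradicts \cite{Bakker:Matsushita} unless $f$ is isotrivial; but the plan is to avoid invoking that result.
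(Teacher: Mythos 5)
Your opening moves (constancy of $V_R$ from \Cref{lem:constant}, hence finiteness of the monodromy of $W_C$ because $\pi_1(R^{\circ})$ has finite index in $\pi_1(C^{\circ})$) are sound, and the Zariski--Lefschetz spreading to $P^{\circ}$ is plausible. But the step you yourself flag as the ``main obstacle'' is not just technically delicate --- it cannot produce a contradiction in the form you propose. The requirement that $\deg\bigl(\shT_{\P^d}\vert_{\ell}\bigr)=d+1$ be ``absorbed'' by the boundary corrections at the points of $\ell\cap D_P$ is exactly what always happens for canonical extensions of Hodge bundles: the identity $g^{*}\shT_B\cong F^{n}\shVt_C^{>-1}$ from Section~2 says precisely that the anticanonical degree $d+1$ equals the sum of the residue contributions $-\alpha$ over the boundary points, and nothing in the setup bounds the number of intersection points $\ell\cap D_P$ from above (in the true geometry, e.g.\ an elliptic K3 where $\deg g^{*}\shT_B=2$ is the sum of $24$ fractional local contributions, this bookkeeping closes up with no contradiction). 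Note also that your final step never uses the standing hypothesis $\deg(R/C)\geq 2$, which is a sign that it cannot distinguish the bad situation from the good one.

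The missing idea is to work locally at a \emph{ramification point} of $p\colon R\to C$, which must exist over some $x\in C$ with $g(x)\in D$ since $C\cong\P^1$ is simply connected and $\deg(R/C)\geq 2$. There the local monodromy $T$ of $V_C$ is finite (again by constancy of $V_R$), so \Cref{lem:finite-monodromy} applies: $T$ has exactly two nontrivial eigenvalues, and since $F^1\shV_C$ is a flat subbundle near $x$, it contains a $T$-stable subbundle of rank $n-1$ on which $T$ acts trivially. Two branches $y,y'$ of $R$ over a point of $C^{\circ}$ near $x$ that are interchanged by the local monodromy of the covering carry subspaces $F^1\shW_R\vert_{y}$ and $F^1\shW_R\vert_{y'}$ related by a power of $T$; if $d\geq 2$, each meets the $T$-fixed hyperplane of $F^1\shV$ in dimension $\geq d-1\geq 1$, and that intersection is $T$-stable, forcing $F^1\shW_R\vert_{y}\cap F^1\shW_R\vert_{y'}\neq 0$. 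This contradicts \Cref{prop:Hwang-tangents}, whereas your global degree computation never brings that proposition into play. (Your ``less self-contained alternative'' via \cite{Bakker:Matsushita} only rules out the non-isotrivial case, as noted in \Cref{rmk:nonisotrivial is done}, so it does not close the gap either.)
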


\begin{proof}
		Because $p \colon R \to C$ is \'etale over $C^{\circ}$, it must ramify over at
	least one point $x \in C$ such that $g(x) \in D$. Being generic, $g \colon C \to B$
	intersects the discriminant locus $D$ transversely, and so $g(x)$ is a smooth
	point of $D$. The pullback of $V$ to $R^{\circ}$ has trivial monodromy (by
	\Cref{lem:constant}), hence the Hodge structures do not vary from point to point;
	as $C^{\circ}$ is quasi-projective, this implies that the Hodge bundle $F^1 \shV_C$
	is a flat subbundle of $\shV_C$. It also follows that the local monodromy of $V_C$
	around the point $x$ has finite order; consequently, the local monodromy of
	$V$ near $g(x)$ has finite order as well. According to \Cref{lem:finite-monodromy},
	the monodromy transformation therefore has exactly two nontrivial eigenvalues, and
	because the Hodge bundle $F^1 \shV_C$ is a flat subbundle of $\shV_C$, it contains
	(in a punctured neighborhood of the point $x$) a subbundle of rank $n-1$ that is
	preserved by the monodromy transformation around $x$. Now
	if $d \geq 2$, then it follows as above that at any two points $y,y' \in
	R^{\circ}$ with $p(y) = p(y')$ sufficiently close to $x$, the two subspaces $F^1
	\shW_R \vert_y$ and $F^1 \shW_R \vert_{y'}$ intersect nontrivially. As this is
	impossible (by \Cref{prop:Hwang-tangents}), we conclude that $d=1$.
\end{proof}

This reduces the problem to the case $d=1$. The smooth morphism $G \colon \cC \to B$
now has finite fibers, and so $s \colon \cC \to Z$ is an open embedding. We may
therefore identify the universal curve $\cC$ with an open subset of $Z$ for the
remainder of the argument. We have $\dim \cC = n$ and $\dim M = n-1$; because there
is nothing to prove when $n=1$, we shall assume from now on that $n \geq 2$.

Recall from \Cref{lem:smoothness} that $p \colon Z \to B$
is \'etale at every point of $\cC$. According to \Cref{lem:splitting}, we have a
decomposition of the tangent bundle
\begin{equation} \label{eq:decomposition-cC}
	\shT_{\cC} \cong \shCfix \oplus \shCpos
\end{equation}
into two foliations of rank $n-1$ respectively $1$, induced by the decomposition
\[
	V_Z = \Vfix_Z \oplus W_Z
\]
of the variation of Hodge structure $V_Z = (p^{\circ})^* V$ on $Z^{\circ}$. From the
construction in \eqref{upstairs splitting}, it is clear that $\shCfix$ is the
pullback of a vector bundle of rank $n-1$ from the moduli space $M$, whereas
$\shCpos$ is tangent to the fibers of $\pi \colon \cC \to M$. By comparing
\eqref{eq:decomposition-cC} with the short exact sequence for the relative tangent
bundle
\[
	0 \to \shT_{\cC/M} \to \shT_{\cC} \to \pi^* \shT_M \to 0,
\]
we deduce that $\shCfix \cong \pi^* \shT_M$ and $\shCpos \cong \shT_{\cC/M}$.

Denote by $\gt \colon R \to Z$ the morphism induced by $p \colon R \to B$, as in the
following commutative diagram:
\[
	\begin{tikzcd}
		R \rar{\gt} \dar{p} & Z \dar{p} \\
		C \rar{g} & B
	\end{tikzcd}
\]
The intersection $R \cap \gt^{-1}(\cC)$ is a dense Zariski open subset of the curve
$R$ that contains $R^{\circ}$.  Under the projection $\pi \colon \cC \to
M$, which is proper, the image $\pi(\gt(R) \cap \cC)$ is a quasi-projective curve
inside the moduli space $M$.

\begin{lem} \label{lem:unbranched}
	The morphism $\pi \circ \gt \colon R \cap \gt^{-1}(\cC) \to \pi(\gt(R) \cap \cC)$
	is an immersion.
\end{lem}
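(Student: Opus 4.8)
The plan is to show that the differential of $\pi\circ\gt$ is nonzero at every point $y\in R\cap\gt^{-1}(\cC)$; since $R$ is a smooth curve, this is exactly the asserted immersion property. Using the decomposition $\shT_{\cC}\cong\shCfix\oplus\shCpos$ with $\shCfix\cong\pi^*\shT_M$ and $\shCpos\cong\shT_{\cC/M}$, the map $d(\pi\circ\gt)_y$ is $d\gt_y\colon T_yR\to T_{\gt(y)}\cC$ followed by the projection onto $\shCfix|_{\gt(y)}$, so it vanishes precisely when $d\gt_y(T_yR)\subseteq\shCpos|_{\gt(y)}$. First I would observe that $\gt$ is automatically unramified along $\gt^{-1}(\cC)$: since $p\colon\cC\to B$ is \'etale (\Cref{lem:splitting}), the fibre product $C\times_B Z$ is smooth near any point of $\gt^{-1}(\cC)$ and \'etale over $C$ there, so $p\colon R\to C$ is unramified at $y$, and since $g$ is an immersion (\Cref{lem:smoothness} and the remark following it) it follows that $d\gt_y\neq 0$. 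Hence the only way $d(\pi\circ\gt)$ can vanish at $y$ is if the curve $\gt(R)$ is genuinely tangent to the fibre of $\pi$ through $\gt(y)$, and the plan is to rule this out.

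So suppose $d\gt_y(T_yR)=\shCpos|_{\gt(y)}$ (both are lines, as $d=1$), and set $b=p(\gt(y))=g(p_R(y))$. Applying the isomorphism $dp_{\gt(y)}$, the left side becomes the tangent line at $b$ of the curve $g(C)$ along the branch through $p_R(y)$, and the right side becomes the tangent line at $b$ of the member $C_m\subseteq B$ of our family indexed by $m:=\pi(\gt(y))$ (using that $\shCpos=\shT_{\cC/M}$ restricts to $\shT_{\pi^{-1}(m)}$ and that $G=p\circ s$ is \'etale). Since $\shCpos$ is the Hodge-module extension of $F^1\shW_Z$, under the isomorphism $T_bB\cong F^1\shV|_b$ these two tangent lines become the fibres $F^1\shW_Z|_{z_1}$ and $F^1\shW_Z|_{z_2}$, where $z_2=\gt(y)$ and $z_1=(p_R(y),[g])$ is the point of $\pi^{-1}([g])\subseteq\cC\subseteq Z$ over $b$ with $C$-coordinate $p_R(y)$. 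Moreover $z_1\neq z_2$: otherwise $y$ would lie on both $R$ and the ``diagonal'' component $\{(c,\iota(c))\}$ of $C\times_B Z$, where $\iota\colon C=\pi^{-1}([g])\hookrightarrow\cC\hookrightarrow Z$; but these two distinct irreducible components cannot both pass through a point of $\gt^{-1}(\cC)$, where $C\times_B Z$ is locally irreducible. Thus $z_1\neq z_2$ are two points of $Z$ over $b$ with $F^1\shW_Z|_{z_1}=F^1\shW_Z|_{z_2}$ inside $F^1\shV|_b$.

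When $b\in B^{\circ}$, \Cref{prop:Hwang-tangents} gives $P_{z_1}=P_{z_2}$. Since $z_1\in\pi^{-1}([g])$, the subvariety $P_{z_1}$ is the curve $\iota(C)$ swept out by $\pi^{-1}([g])$ itself, and likewise $P_{z_2}$ is swept out by $\pi^{-1}(m)$; hence $\pi^{-1}([g])=\pi^{-1}(m)$, i.e. $m=[g]$ and $\gt(y)\in\pi^{-1}([g])$. Writing $\gt(y)=\iota(c')$ we get $g(c')=b=g(p_R(y))$; if $c'=p_R(y)$ then $y$ again lies on both the diagonal component and $R$, impossible, while if $c'\neq p_R(y)$ then $b$ is a self-intersection of $g(C)$ at which the branches through $c'$ and $p_R(y)$ share a tangent line $F^1\shW_Z|_{z_1}=F^1\shW_Z|_{z_2}$ — impossible since a general member of our family is an immersion whose image has only ordinary double points. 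So $d(\pi\circ\gt)$ is nonzero at every point of $R^{\circ}$.

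The remaining case, with $y$ lying over the discriminant $D$, is where I expect the main difficulty. Since $\gt$ (hence $p\colon R\to C$) is unramified at $y$ and the pullback $V_R$ to $R^{\circ}$ is constant (\Cref{lem:constant}), the local monodromy of $V_C$ around $p_R(y)$ is trivial, and by transversality of $g$ to $D$ so is the local monodromy of $V$ around $b=g(p_R(y))$. Therefore $V$ extends to a polarized variation of Hodge structure over a neighbourhood $U\subseteq B$ of $b$ (the limiting mixed Hodge structure is pure), where $\shT_U\cong F^1\shV_U$ by \Cref{thm:Matsushita}; the Lie-bracket formula (\Cref{lem:bracket}), the foliation construction (\Cref{lem:foliation}), and the arguments of \Cref{lem:Pz}, \Cref{lem:Hwang-intersection} and \Cref{prop:Hwang-tangents} then all extend over $U$ with the $P_{z_i}$ replaced by their closures, and the argument of the previous paragraph yields the same contradiction. (Alternatively, one could invoke \cite[Prop.~3.2]{Hwang:base} — nontriviality of the local monodromy at a smooth point of $D$ — to see directly that $R\cap\gt^{-1}(\cC)=R^{\circ}$, reducing everything to the previous two paragraphs; but we prefer not to rely on that.) The delicate points are thus the bookkeeping of the components of $C\times_B Z$ and their images in $Z$, and the extension of the curve-geometry of \Cref{sect connected comp} across the finite-monodromy locus of $D$.
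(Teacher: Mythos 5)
There is a genuine gap, and it comes from missing the one idea that makes the paper's proof work: \Cref{lem:constant} tells you that $V_R$ is \emph{constant} on $R^{\circ}$, so both $W_Z\vert_{R^{\circ}}$ and $(p^{\circ})^*W_C$ extend to constant rank-$2$ subvariations over \emph{all} of $R$. The hypothesized tangency at the single point $y$ (whether $y$ lies over $B^{\circ}$ or over $D$) forces these two constant subvariations to agree at $y$, hence everywhere on $R$, hence on $R^{\circ}$ --- and there the nontrivial intersection $(g^{\circ})^*W_C\cap W_R$ was already ruled out in the first paragraph of the proof of \Cref{lem:constant}. This propagation from one point to all of $R^{\circ}$ is what lets the paper apply \Cref{prop:Hwang-tangents} only at interior points and avoid everything you struggle with.

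Because you instead try to apply \Cref{prop:Hwang-tangents} pointwise at $b=p(\gt(y))$, you hit two problems. First, in the case $b\in B^{\circ}$ your endgame needs to exclude two branches of $g(C)$ sharing a tangent at a self-intersection point, and you dismiss this by asserting that a general member has an image with only ordinary double points; the paper only establishes (and only knows) that a general member is an \emph{immersion}, and \Cref{prop:Hwang-tangents} itself gives no contradiction there since both branches belong to the same $P_z$. Second, and more seriously, your treatment of the case $y\in R\setminus R^{\circ}$ rests on the unproved claim that \Cref{lem:Pz}, \Cref{lem:Hwang-intersection} and \Cref{prop:Hwang-tangents} ``all extend over $U$ with the $P_{z_i}$ replaced by their closures''; those statements are proved using the foliation structure and the Bend-and-Break finiteness argument on $Z^{\circ}$ only, and extending them across the discriminant is not a routine matter (indeed the whole difficulty of \Cref{sect cover} is that one cannot freely work over $D$). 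Both gaps disappear once you use the constancy of $V_R$ as above.
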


\begin{proof}
	Suppose to the contrary that $y \in R \cap \gt^{-1}(\cC)$ is a branch point. Set
	$z = \gt(y) \in \gt(R) \cap \cC$. This
	means that $\gt(R)$ is tangent to the fiber $P_z = \pi^{-1} \bigl( \pi(z) \bigr)$ at
	the point $z$. Since $p \colon Z \to B$ is \'etale there, we have $T_z Z \cong
	T_{p(z)} B$. Under this isomorphism, the tangent space $T_y R$ maps isomorphically
	to $T_{p(z)} C$, and the tangent space $T_z P_z$ is isomorphic to $\shCpos
	\vert_z$. By \Cref{lem:constant}, the variation of Hodge structure $(p^{\circ})^*
	V_C$ on $R^{\circ}$ is constant. It therefore extends uniquely to a constant
	variation of Hodge structure $V_R$ on the entire curve $R$, and both $W_Z
	\vert_{R^{\circ}}$ and $(p^{\circ})^* W_C$ extend to constant subvariations of
	rank $2$. The condition that $T_z Z$ and $T_y R$ have the same image inside
	$T_{p(z)} B$ means that these two subvariations are equal at the point $y$. Being
	constant, they must then be equal everywhere, and as usual, this is in
	contradiction with \Cref{prop:Hwang-tangents}.
\end{proof}

We now investigate what happens when we deform the rational curve $g \colon C \to B$.
Because $\pi \colon \cC \to M$ is a
$\P^1$-bundle, we can choose a small open neighborhood $M_0$ of the point $g \colon
C \to B$ in the moduli space $M$, say with $M_0$ biholomorphic to an open ball,
such that $\pi^{-1}(M_0) \cong M_0 \times \P^1$. The original rational curve is now
a morphism $g \colon \P^1 \to B$, and the nearby subvarieties $P_z$ are the different
copies of $\P^1$. The following diagram shows the relevant morphisms:
\[
	\begin{tikzcd}
		M_0 \times \P^1 \dar{\pi} \rar{G} & B \\
		M_0	
	\end{tikzcd}
\]
By restricting the decomposition in \Cref{lem:splitting} to the open subset $M_0 \times
\P^1$, we obtain a decomposition of the tangent bundle
\[
	\shT_{M_0 \times \P^1} \cong \shUPfix \oplus \shUPpos
\]
in which the two summands are foliations of rank $n-1$ respectively $1$. By
construction, the fibers of $\pi$ are leaves of $\shUPpos$, and the restriction of
$\shUPfix$ to the fibers is trivial. 

Since the initial curve $g \colon \P^1 \to B$ intersects $D$ transversely,
the inverse image $G^{-1}(D)$ is a union of finitely many sections $(\id \times
h_j)(M_0)$, where $h_1, \dotsc, h_m \colon M_0 \to \P^1$ are holomorphic mappings.
The next result says that each of these sections is actually a leaf of the foliation
$\shUPfix$.

\begin{lem} \label{lem:discriminant-tangent}
	The foliation $\shUPfix$ is tangent to the divisor $G^{-1}(D)$.
\end{lem}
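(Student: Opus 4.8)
The plan is to show that each section $(\id \times h_j)(M_0)$ of $G^{-1}(D)$ is a leaf of $\shUPfix$ by analyzing the behavior of the canonical extension along this divisor and using the structure of $\shUPpos$ as the vertical foliation. First I would observe that $\shUPpos = \shT_{M_0 \times \P^1/M_0}$ is tangent to the fibers of $\pi$, so it suffices to prove that $\shUPfix$ (which has rank $n-1$) is tangent to the hypersurface $\Delta_j := (\id \times h_j)(M_0)$; equivalently, that at each point $w \in \Delta_j$, the subspace $\shUPfix|_w \subseteq T_w(M_0 \times \P^1)$ lies in $T_w \Delta_j$, or else that $\shUPfix|_w + T_w\Delta_j$ is everything, which for dimension reasons would force $\shUPpos|_w \subseteq T_w\Delta_j$ — but $\shUPpos|_w$ is the vertical direction, which is transverse to $\Delta_j$ (since $h_j$ is a section, each $\Delta_j$ meets each fiber $\{m\} \times \P^1$ in one point transversely). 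So the dichotomy collapses: the only possibility is $\shUPfix|_w \subseteq T_w\Delta_j$.

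More precisely, the key input is the pointwise analysis of \Cref{lem:finite-monodromy} and the formula \eqref{eq:bracket-canonical-extension}. Near a point $g(x) \in D$ with $x$ a transverse intersection of the initial curve $C$ with $D$, the local monodromy of $V$ around $D$ is, after restriction to $C$, of finite order by \Cref{lem:constant} (the pullback to $R^\circ$ is constant, hence the monodromy of $V_C$ around such $x$ has finite order), hence the full local monodromy of $V$ around $D$ is finite; by \Cref{lem:finite-monodromy} it has exactly one nonzero exponent $\alpha_1$ among $\alpha_1, \dots, \alpha_n$. The Remark following \Cref{lem:finite-monodromy} then shows that $n-1$ of the adapted vector fields $s_2, \dots, s_n$ spanning $F^1\shVt^{>-1} \cong \shT_B$ near $g(x)$ are tangent to $D$. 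Pulling this picture back along $G \colon M_0 \times \P^1 \to B$, these vector fields span a rank-$(n-1)$ subbundle of $G^*\shT_B$ tangent to $G^{-1}(D)$; the point is to identify this subbundle with $\shUPfix$. This identification comes from the fact that $\shUPfix$ is (the pullback along $\pi$ of a bundle from $M_0$, hence) spanned by sections that are flat in the $\P^1$-direction, i.e. by the horizontal directions, and these correspond precisely to the subspace of $F^1\shVt^{>-1}$ spanned by eigenvectors of monodromy with exponent $0$ — which is exactly $\langle s_2, \dots, s_n\rangle$. The complementary line $\shUPpos$ corresponds to the $\alpha_1$-eigendirection, which is $\langle s_1 \rangle$, and $s_1$ is precisely the one vector field \emph{not} tangent to $D$.

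The main obstacle I expect is making the identification ``$\shUPfix$ = span of the zero-exponent eigenvectors'' fully rigorous at the boundary points $G^{-1}(D)$, rather than just over the open locus where everything is a genuine variation of Hodge structure. Over $M_0 \times \P^1 \setminus G^{-1}(D)$ (intersected with the preimage of $B^\circ$) the decomposition $\shUPfix \oplus \shUPpos$ is induced by $\Vfix_Z \oplus W_Z$; I need to argue that $\shUPfix$, being a subbundle of the locally free $\shT_{M_0 \times \P^1}$, extends across the codimension-one locus $G^{-1}(D)$ in a way compatible with the canonical-extension description of $\shT_B$ via \Cref{thm:Matsushita} and \eqref{eq:resolution}. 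Since the curves $P_z$ (the $\P^1$-fibers) are tangent to $\shUPpos$ and meet $D$ transversely, $\shUPpos$ does not degenerate along $G^{-1}(D)$, so $\shUPfix$ is the unique rank-$(n-1)$ complementary foliation there, and by uniqueness it must agree with $\langle s_2, \dots, s_n \rangle$, which is tangent to $D$. An alternative, perhaps cleaner route: work along a single fiber copy $P_z$ meeting $D$ at one point, use that $\shUPfix|_{P_z}$ is trivial and hence has a basis of global sections, track one such section across the puncture using the $\alpha_i = 0$ flatness from \eqref{eq:bracket-canonical-extension}, and conclude each such section is tangent to $D$ at the intersection point; letting $z$ vary over $M_0$ fills out $\Delta_j$. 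I would also double-check the edge case where $h_j$ is constant versus non-constant, but in either case the transversality of $\Delta_j$ to the vertical foliation $\shUPpos$ is what forces the conclusion via the dimension count above.
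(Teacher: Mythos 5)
Your second paragraph is essentially the paper's proof: run the frame construction of \Cref{lem:finite-monodromy} near a component of $G^{-1}(D)$ (the finite-order hypothesis being supplied, as you say, by \Cref{lem:constant}), conclude from \eqref{eq:bracket-canonical-extension} that $s_2,\dotsc,s_n$ are tangent to the divisor, and identify $\langle s_2,\dotsc,s_n\rangle$ with $\shUPfix$. But two of your auxiliary arguments are not correct as stated. First, the ``dichotomy'' in your opening paragraph is a non sequitur: if $\shUPfix|_w + T_w\Delta_j = T_w(M_0\times\P^1)$, nothing forces $\shUPpos|_w \subseteq T_w\Delta_j$. (In $\C^2$ take $\Delta=\{y=0\}$, $\shUPfix=\langle \partial_x+\partial_y\rangle$, $\shUPpos=\langle \partial_y\rangle$: the sum is a direct sum, $\Delta$ is transverse to the vertical factor, and yet neither summand is tangent to $\Delta$.) Transversality of $\shUPpos$ to $\Delta_j$ by itself proves nothing; the Hodge-theoretic input is indispensable, so that paragraph should simply be deleted.

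Second, the identification $\shUPfix = \langle s_2,\dotsc,s_n\rangle$ along $G^{-1}(D)$ cannot rest on ``uniqueness of the rank-$(n-1)$ complementary foliation'' --- complements are not unique. The clean way, and what the paper does, is to build the splitting into the frame from the start: choose $v_1, v_{n+1}$ in the canonical extension of $W_Z$ and $v_2,\dotsc,v_n,v_{n+2},\dotsc,v_{2n}$ in that of $\Vfix_Z$. Since the splitting $V_Z=\Vfix_Z\oplus W_Z$ induces a splitting of canonical extensions and of their $F^1$ subbundles, $s_1$ then frames the canonical extension of $F^1\shW_Z$ and $s_2,\dotsc,s_n$ frame that of $F^1\shVfix_Z$, i.e.\ $\shUPfix$, across the divisor and not merely on the open locus. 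Your alternative identification via monodromy eigenvalues does work, but only after you record why the $T$-fixed subspace equals $\Vfix_Z$: the local monodromy acts trivially on $\Vfix_Z$ because $\Vfix$ restricts to the fixed part on each $P_z$, which meets $\Delta_j$ transversely, and by \Cref{lem:finite-monodromy} the only two nontrivial eigenvalues then live in $W_Z$. With that observation added and the frame chosen compatibly with the splitting, your argument closes the gap you yourself flagged.
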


\begin{proof}
	The argument is very similar to the proof of \Cref{lem:finite-monodromy}. The
	holomorphic mapping from $M_0 \times \P^1 \to Z$ is an open embedding, and by 
	\Cref{lem:splitting}, the tangent bundle $\shT_{M_0 \times \P^1}$ is therefore the
	canonical extension of the pullback of the Hodge bundle $F^1 \shV_Z$. Since $V_Z =
	\Vfix_Z \oplus W_Z$, we can apply the construction in \Cref{lem:finite-monodromy}
	in a neighborhood of any component of $G^{-1}(D)$, but choosing the sections
	$v_1$ and $v_{n+1}$ in the canonical extension of $W_Z$, and the remaining
	sections $v_2, \dotsc, v_n$ and $v_{n+2}, \dotsc, v_{2n}$ in the canonical
	extension of $\Vfix_Z$. Then $s_1$ is a local frame for the canonical extension of $F^1
	\shW_Z$, and $s_2, \dotsc, s_n$ are a local frame for the canonical extension of
	$F^1 \shVfix_Z$, and therefore for the foliation $\shUPfix$. The same argument as
	before shows that $s_2, \dotsc, s_n$ are tangent to the given component of
	$G^{-1}(D)$, and this proves the claim.
\end{proof}

The tangent bundle of $M_0 \times \P^1$ is globally generated (because $\shT_{\P^1}$
is globally generated and $M_0$ is a Stein manifold); the same thing is therefore
true for the foliation $\shUPfix$.
After composing $G$ with an automorphism of $M_0 \times \P^1$, we can arrange that
the three standard sections $M_0 \times \{0\}$, $M_0 \times \{1\}$, and
$M_0 \times \{\infty\}$ are leaves of the foliation $\shUPfix$. From the isomorphism
\[
	H^0 \bigl( M_0 \times \P^1, \shT_{M_0 \times \P^1} \bigr)
	\cong H^0(M_0, \shT_{M_0}) \oplus H^0(M_0, \shO_{M_0}) 
	\otimes H^0 \bigl( \P^1, \shT_{\P^1} \bigr)
\]
we then conclude that \emph{every} leaf of $\shUPfix$ must be of the form $M_0 \times \{x\}$
for a point $x \in \P^1$. In particular, every component of the divisor $G^{-1}(D)$
must be of this form, which means that the holomorphic mappings $h_1, \dotsc, h_m
\colon M_0 \to \P^1$ are constant. 

From this fact, we can easily deduce that the finite covering $p \colon R \to \P^1$
does not change as we deform $g \colon \P^1 \to B$. Recall that $R$ is a smooth
projective curve, and that $\gt \colon R \to Z$ is the induced morphism.
After shrinking $M_0$, if necessary, the base change
\[
	\begin{tikzcd}
		(M_0 \times \P^1) \times_B Z \rar \dar & Z \dar{p} \\
		M_0 \times \P^1 \rar{G} & B
	\end{tikzcd}
\]
is also smooth; let $\cR$ be the unique component containing $R$. We have
just shown that $\cR^{\circ} \to M_0 \times (\P^1)^{\circ}$ is a finite covering
space; because finite covering spaces of a fixed quasi-projective curve do not deform
in a nontrivial way, we get $\cR \cong M_0 \times R$. The following commutative
diagram shows the relevant morphisms:
\[
	\begin{tikzcd}
		& \cC \dar[hook] \rar{\pi} & M \\
		M_0 \times R \rar{\Gt} \dar{\id \times p} & Z \dar{p} \\
		M_0 \times \P^1 \rar{G} & B
	\end{tikzcd}
\]
According to \Cref{lem:unbranched}, the morphism
\[
	\pi \circ \gt \colon R \cap \gt^{-1}(\cC) \to \pi \bigl( \gt(R) \cap \cC \bigr)
\]
is unramified. Because unramified morphisms between two fixed quasi-projective curves
also do not deform in a nontrivial way, the following lemma will very quickly lead to
the desired contradiction.

\begin{lem} \label{lem:contradiction}
	There is a $1$-dimensional family of deformations of the initial rational curve $g
	\colon \P^1 \to B$ for which the curves $\pi(\gt(R) \cap \cC) \subseteq M$ stay
	the same.
\end{lem}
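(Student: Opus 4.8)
The plan is to produce the required one-parameter family by deforming $g\colon\P^1\to B$ \emph{inside the surface swept out by the curves of} $N_m$, which is a Hodge-theoretic reorganization of the argument in \cite[\S5]{Hwang:base}. Throughout we are in the reduced situation: $d=1$, $\cC\hookrightarrow Z$ is open, $p=G\colon\cC\to B$ is étale, $\cR\cong M_0\times R$, and $g\colon\P^1\to B$ is general. First I would set $N=\overline{N_m}\subseteq M$, so that $\Sigma:=\pi^{-1}(N)\subseteq\cC$ is a ruled surface over $N$ whose fibers are exactly the rational curves appearing in $N_m$, and $S:=p(\Sigma)\subseteq B$ is the surface they sweep out; then $g(C)\subseteq S$, while the multisection $\gt(R)\cap\cC$ lies in $\Sigma$ and maps onto $g(C)$ with degree $\deg(R/C)\ge 2$. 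The first point is a reformulation of the goal: since $p$ is étale, $\Sigma$ is an irreducible component of $p^{-1}(S)$, so an irreducible deformation $\Gt(\{m'\}\times R)\cap\cC$ of $\gt(R)\cap\cC$ lies in $\Sigma$ as soon as its image $g_{m'}(C_{m'})$ lies in the \emph{fixed} surface $S$; and $\Gt(\{m'\}\times R)\cap\cC\subseteq\Sigma$ is precisely the statement $\pi\bigl(\Gt(\{m'\}\times R)\cap\cC\bigr)=N_m$. Hence it suffices to prove that the locus $\{\,m'\in M_0 : g_{m'}(C_{m'})\subseteq S\,\}$ has dimension $\ge 1$ at $m$ — equivalently, that $g$ admits a one-parameter family of deformations, through free rational curves of anticanonical degree $d+1$ in our component $M$, staying inside $S$.

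The engine would be a positivity statement. Because $G\colon\cC\to B$ is étale, $K_{\cC}=p^{*}K_{B}$; combining this with adjunction on $\Sigma$, with the identity $p_{*}[\gt(R)]=\deg(R/C)\cdot[g(C)]$, with Riemann--Hurwitz for the branched covering $R\to C$, and with the minimality $-K_B\cdot C=d+1=2$, one computes that the relevant curve — the section $\gt(R)\cap\cC$ of $\Sigma\to N$, or equivalently the strict transform of $g(C)$ on a resolution of $S$ — has \emph{strictly positive} self-intersection. Riemann--Roch then forces it to move in a positive-dimensional linear system, and pushing this family down to $B$ yields the sought deformations of $g$ inside $S$; deformation-invariance of the anticanonical degree (the family is flat, living in the fixed surface $S$) together with openness of freeness keeps the members in $M$. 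By the reformulation above, these deformations trace out a curve $\Gamma\subseteq M_0$ through $m$ along which $\pi(\gt(R)\cap\cC)$ stays equal to $N_m$.

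The step I expect to be the main obstacle is exactly this deformation-theoretic control, because $S$ is badly singular: the condition $\deg(R/C)\ge 2$ \emph{forces} $g(C)$ to lie in the singular locus of $S$ (a general point of $g(C)$ lies on $\deg(R/C)\ge 2$ of the ruling curves $g_{m''}(C_{m''})$ of $S$, since the $\deg(R/C)$ preimages of that point in $\gt(R)$ sit on $\deg(R/C)$ distinct fibers of $\Sigma\to N$). So one cannot argue on $S$ directly but must pass to a resolution $\widehat S\to S$, checking that the strict transform of $g(C)$ is a smooth rational curve of nonnegative self-intersection, that its deformations remain rational and descend to curves of $M$, and that "staying in $S$" genuinely forces $N_{m'}=N_m$ rather than merely $N_m\subseteq\overline{N_{m'}}$. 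The hypotheses already in force — $g$ general (so $C$ meets $D$ transversely, $N$ is smooth, and $\gt(R)\cap\cC$ embeds) and $V$ constant on $R^{\circ}$ by \Cref{lem:constant} — are what make the geometry near $D$ controllable enough for this.

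Finally, granting \Cref{lem:contradiction}, the contradiction is immediate, as explained before its statement: along $\Gamma$ the covering $R\to C$ is the fixed one $\cR\cong M_0\times R$ and, by \Cref{lem:unbranched}, the immersion $\pi\circ\gt\colon R\to N_m$ between the fixed curves $R$ and $N_m$ cannot deform nontrivially, so for each $y\in R$ the point $\Gt(m',y)$ moves along the \emph{fixed} fiber $P_{\phi(y)}$ of $\pi$ while its image $p(\gt(m',y))\in B$ genuinely moves. Choosing $y_1\ne y_2$ lying over the same point of $C$ and applying \Cref{prop:Hwang-tangents} then exhibits two distinct minimal rational curves of $B$ that share a moving point — which is absurd — and this rules out $\deg(R/C)\ge 2$, completing the proof of \Cref{prop:Hwang}.
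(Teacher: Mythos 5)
Your reduction of the lemma to the statement that $g$ deforms in a one--parameter family inside the surface $S$ swept out by the curves parametrized by $N:=\pi(\gt(R)\cap\cC)$ matches the paper's strategy, and your closing paragraph (how the lemma yields the contradiction) is essentially the paper's. But the engine you propose for producing the deformations is completely different from the paper's, and it has a genuine gap. The paper uses no intersection theory: it proves that $S$ is \emph{smooth} at every point of $g(\P^1)\cap B^{\circ}$, because near such a point $S$ is the leaf of the rank--two foliation $F^1\shW$ attached to $W=W_0+W_1+\dotsb+W_r$, whose rank is exactly $4$ by the constancy of $V_R$ from \Cref{lem:constant} together with \Cref{prop:Hwang-tangents}; integrability comes from \Cref{lem:foliation}, i.e.\ from Freed's bracket formula \Cref{lem:bracket}. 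The one--parameter family is then produced explicitly by \Cref{lem:integrable-sum}: one slides the branch $C_0$ of $g$ along a transverse leaf $C_j$, and the slid leaves of $F^1\shW_0$ are by construction branches of nearby curves of the family $M$. Note in particular that your premise that ``$\deg(R/C)\ge 2$ forces $g(C)$ to lie in the singular locus of $S$'' is exactly what the Hodge theory refutes over $B^{\circ}$: the $r+1$ curve germs through $b_0$ all lie in a single smooth two--dimensional leaf because their tangent lines span only the rank--two space $F^1\shW\vert_{b_0}$. This smoothness is the content that makes the lemma true, and it cannot be bypassed.

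The concrete gap is the positivity claim, which is asserted rather than computed and which the listed ingredients do not deliver. (i) You conflate two different curves: $\gt(R)\cap\cC$ pushes forward to $\deg(R/C)\cdot[g(C)]$, so moving it in $\Sigma$ produces curves of anticanonical degree $2\deg(R/C)$, not deformations of $g$; its self--intersection is not ``equivalent'' to that of the strict transform of $g(C)$ in a resolution $\widehat{S}$. (ii) Even in the most optimistic (smooth, compact) model, adjunction for the surface $\Sigma=\pi^{-1}(N)$ inside the $n$--fold $\cC$ involves $\det N_{\Sigma/\cC}\cong\pi^{*}\det N_{N/M}$, and combining it with Riemann--Hurwitz and $-K_B\cdot C=2$ gives schematically $\bigl(\gt(R)\bigr)^2=b-e\deg\det N_{N/M}+(\text{boundary corrections})$, where $b$ is the ramification of $R\to C$ and $e=\deg\bigl(\gt(R)/N\bigr)$; nothing controls the sign of the last two terms. (iii) The intersection theory must happen on a compactification, and $\overline{S}$, $\overline{\Sigma}$ are singular precisely over the discriminant $D$, where none of your hypotheses ($p$ \'etale, transversality, \Cref{lem:unbranched}) apply; the discrepancies of a resolution there make the self--intersection of the strict transform drop by uncontrolled amounts. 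So the step you flag as ``the main obstacle'' is in fact the whole content of the lemma, and the foliation--theoretic smoothness of $S$ along $g(\P^1)\cap B^{\circ}$ is the missing idea needed to close it.
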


\begin{proof}
	Consider the quasi-projective curve $\pi(\gt(R) \cap \cC)$ inside the
	moduli space $M$. Its preimage under $\pi$ is a quasi-projective surface in $\cC$.
	Since $p \colon \cC \to B$ is smooth of relative dimension $0$, it follows that
	\begin{equation} \label{eq:surface}
		S = p \Bigl( \pi^{-1} \bigl( \pi(\gt(R) \cap \cC) \bigr) \Bigr) \subseteq B
	\end{equation}
	is a quasi-projective surface in $B$ that contains the intersection $g(\P^1) \cap
	B^{\circ}$. A priori, $S$ could be singular along this curve, but we will show in a
	moment that it is actually smooth in a neighborhood of $g(\P^1)
	\cap B^{\circ}$. We can think of $S$ as being swept out by those rational curves
	in our family that intersect $R$.

	Let us first prove that $S$ is smooth at every point $b_0 \in g(\P^1) \cap
	B^{\circ}$. Let $z_0 \in \cC^{\circ}$ be a point on the curve $g \colon \P^1 \to B$
	such that $p(z_0) = b_0$, and enumerate the points in the fiber $p^{-1}(b_0) \cap
	R^{\circ}$ as $y_1, \dotsc, y_r$, where $r = \deg(R/\P^1)$; set $z_j = \gt(y_j)$.
	Choose an open neighborhood $U \subseteq B$ of the point
	$b_0$, and an open neighborhood $U_j \subseteq \cC$ of
	each point $z_j$, such that $p \vert_{U_j} \colon U_j \to U$ is a biholomorphism.
	Pulling back the $r+1$ variations of Hodge structure $W_Z \vert_{U_j}$, we get
	$r+1$ subvariations $W_0, \dotsc, W_r \subseteq V \vert_U$.  From the proof of
	\Cref{lem:constant}, we know that the subvariation $W_R + (p^{\circ})^* W_C$ of
	$V_R$ has rank $4d=4$, and that its fibers at any two points $y,y' \in R^{\circ}$
	with $p(y) = p(y')$ map to the same subspace of $T_{p(y)} B$. This implies that $W
	= W_0 + W_1 + \dotsb + W_r$ has rank
	$4$ at the point $b_0$, and therefore on all of $U$. Consequently, the Hodge
	bundle $F^1 \shW$ defines a foliation of rank $2$ on the open set $U$, and this
	foliation is the sum of the two foliations $F^1 \shW_0$ and $F^1 \shW_j$ of rank
	$1$, for every $j=1, \dotsc, r$.

	From \eqref{eq:surface}, it is easy to see that all those curves involved in the
	construction of $S$ that intersect the open subset $U$ must be tangent to the
	foliation $F^1 \shW$. For dimension reasons, this implies that $S \cap U$ is the
	leaf of the foliation $F^1 \shW$ through the point $b_0$, and therefore smooth.

	Now let $C_0$ denote the branch of the curve $g \colon \P^1 \to B$ corresponding to
	the point $z_0$; to be precise, we have $C_0 = p(U_0 \cap P_{z_0})$. Similarly,
	let $C_j = p(U_j \cap P_{z_j})$ be the branches of the other $r$ rational curves.
	By construction, each $C_j$ is the leaf of the foliation $F^1 \shW_j$ through
	$b_0$. According to \Cref{lem:integrable-sum}, the surface germ $S \cap U$ is
	therefore swept out by the deformations of the curve germ $C_0$ along
	each of the curves $C_j$. This gives us the desired $1$-dimensional family of
	deformations of the rational curve $g \colon \P^1 \to B$.
\end{proof}

We can now finish the proof of \Cref{prop:Hwang} as follows. Let $\Delta$ be the unit
disk, and represent the $1$-dimensional family of deformations of $g \colon \P^1 \to B$
by a holomorphic mapping $h \colon \Delta \to M_0$ with $h(0) = [g]$. Let
us denote by $g_t = G(h(t), -) \colon \P^1 \to B$ the rational curves in this
family, and by $\gt_t = \Gt(h(t), -) \colon R \to Z$ the resulting family of
morphisms from $R$. By \Cref{lem:unbranched}, the composition
\[
	\pi \circ \gt_t \colon R \cap \gt_t^{-1}(\cC) \to \pi \bigl( \gt_t(R) \cap \cC \bigr)
\]
is unramified for every $t \in \Delta$. Since the source and the target stay fixed,
it follows that $\pi \circ \gt_t$ is independent of $t$. This means that if we let
$y_1, \dotsc, y_r$ be the points in the fiber $p^{-1}(x)$ over a fixed point $x \in
(\P^1)^{\circ}$, and set $z_j = \gt(y_j)$, then as $t \in \Delta$ varies, the image
$\gt_t(y_j)$ sweeps out a small open subset of the curve $P_{z_j}$. Because $p \circ
\gt = g \circ p$, it follows that $p(P_{z_1}) = \dotsb = p(P_{z_r})$; but then
\Cref{lem:Hwang-intersection} gives $P_{z_1} = \dotsb = P_{z_r}$, and this clearly
contradicts our  initial assumption that $r \geq 2$.

\section{Proof of the main theorem}\label{sect proof}

At this point, we can prove the main theorem very quickly by putting together the
results from the previous five sections.

\begin{proof}[Proof of \Cref{main}]By way of contradiction, assume $B$ is not
	isomorphic to $\P^n$.  By \Cref{thm:Mori}, there must be a minimal degree rational
	curve $g_0:C_0\to B$ meeting $B^\circ$ with $d+1 = -\deg g_0^*\shT_B\leq n$.  By
	\Cref{prop:semipositive}, since $g_0^*\shT_B$ contains a copy of $\shT_{C_0}\cong
	\shO(2)$, it follows that $g_0^*\shT_B$ has a trivial factor, so by
	\Cref{prop:fixed-part} and \Cref{lem:splitting}, we obtain a nontrivial
	splitting of the variation of Hodge structure $V_Z$.  A general curve $g \colon C
	\to B$ meets the boundary transversely, and since $b_2(B)=1$, it must meet every
	irreducible component of
	the boundary that has dimension $n-1$. By \Cref{prop:Hwang} it follows that
	$p \colon Z\to B$ is unramified in codimension 1, and because $Z$ is normal by
	construction, $p$ must be an \'etale cover by purity of the branch locus
	\cite{purityofbranch}.  Since $B$ is simply connected, it follows that $p$ is an
	isomorphism; this means that there is a nontrivial splitting of $V$ itself, which
	contradicts Voisin's result in \Cref{thm:voisin}.
\end{proof}

\begin{remark}\label{example}The fact that $M$ was a full deformation space of a
	family of rational curves admitting a splitting was crucial to the argument.
	Indeed, consider $S\to \P^1$ an elliptic K3 surface, and $X=S^{[n]}\to
	B=\P^n$ the Hilbert scheme of $n$ points.  Then taking the cover
	$\P^1\times\P^{n-1}\to \P^n$ obtained by quotienting $(\P^1)^n$ by $S_{n-1}$, we
	have a covering family of rational curves (namely the $\P^1$ fibers) along which
	the pullback of $V$ has a fixed part (of rank $2n-2$), and the cover
	$\P^1\times\P^{n-1}\to\P^n$ splits completely over the open part of these curves.
	These curves are all tangent to the diagonal of $\P^n$ however, so this does not
	imply $\P^1\times\P^{n-1}\to\P^n$ is unramified (and indeed it is not), and it
	also does not imply that the splitting descends (and indeed it does not).
\end{remark}

\begin{remark}
The proof of \Cref{main} works equally well for a Lagrangian fibration $f:X\to B$ of
a primitive symplectic variety $X$, provided we additionally assume $X$ is simply
connected and that the discriminant is divisorial. 
\end{remark}

\def\MR#1{}
\bibliographystyle{amsalpha}
\bibliography{lagrangian}
\end{document}